\newif\ifscreen
\renewcommand{\sfdefault}{cmss}
\DeclareSymbolFont{letterg}{OML}{cmbrm}{m}{it}
\DeclareMathSymbol{g}{\mathalpha}{letterg}{`g}
\renewcommand{\d}{\mathrm d}
\newtheorem{remark}{Remark}[section]
\newtheorem{definition}{Definition}[section]
\newtheorem{hypothesis}{Hypothesis}[section]
\newtheorem{theorem}{Theorem}
\newtheorem{proposition}{Proposition}[section]
\newtheorem{conjecture}{Conjecture}[section]
\newtheorem{lemma}{Lemma}
\numberwithin{equation}{section}
\title{Phase transitions, hysteresis, and hyperbolicity for self-organized alignment dynamics}
\author{Pierre Degond$^{(1,2)}$, Amic Frouvelle$^{(3)}$,  Jian-Guo Liu$^{(4)}$}
\date{}
\begin{document}
\maketitle

\begin{center}
1- Université de Toulouse; UPS, INSA, UT1, UTM;\\ 
Institut de Mathématiques de Toulouse; \\
F-31062 Toulouse, France. \\
2- CNRS; Institut de Mathématiques de Toulouse UMR 5219;\\ 
F-31062 Toulouse, France.\\
email: pierre.degond@math.univ-toulouse.fr
\end{center}

\begin{center}
3- CEREMADE, UMR CNRS 7534\\
Université Paris-Dauphine\\
75775 Paris Cedex 16, France\\
email: frouvelle@ceremade.dauphine.fr
\end{center}

\begin{center}
4- Department of Physics and Department of Mathematics\\
Duke University\\
Durham, NC 27708, USA\\
email: jliu@phy.duke.edu
\end{center}

\begin{abstract}
We provide a complete and rigorous description of phase transitions for kinetic models of self-propelled particles interacting through alignment.
These models exhibit a competition between alignment and noise.
Both the alignment frequency and noise intensity depend on a measure of the local alignment.
We show that, in the spatially homogeneous case, the phase transition features (number and nature of equilibria, stability, convergence rate, phase diagram, hysteresis) are totally encoded in how the ratio between the alignment and noise intensities depend on the local alignment.
In the spatially inhomogeneous case, we derive the macroscopic models associated to the stable equilibria and classify their hyperbolicity according to the same function.
\end{abstract}

\medskip
\noindent
{\bf Acknowledgements:} This work has been supported by KI-Net NSF RNMS grant No. 1107291.
The research of JGL was partially supported by NSF grant DMS 10-11738.
JGL is grateful for the opportunity to stay and work at the Institut de Mathématiques de Toulouse in fall 2012, under sponsoring of Centre National de la Recherche Scientifique and University Paul Sabatier and at University Paris-Dauphine, under the sponsoring of the french ‘Agence Nationale pour la Recherche (ANR)’ in the frame of the contract ‘CBDif-Fr’ (ANR-08-BLAN-0333-01).
AF wants to acknowledge partial support from the FP7-REGPOT-2009-1 project “Archimedes Center for Modeling, Analysis and Computation”. PD acknowledges support from the ‘Agence Nationale pour la Recherche (ANR)’ in the frame of the contract  ‘MOTIMO’ (ANR-11-MONU-009-01).

\medskip
\noindent
{\bf Key words: }
Spontaneous symmetry breaking, von Mises--Fisher distribution, critical density, critical exponent, LaSalle’s principle, rate of convergence, stability, self-propelled particles, alignment interaction, Vicsek model, hydrodynamic limit, diffusion limit.

\medskip
\noindent
\textbf{AMS subject classification:} 35L60, 35K55, 35Q80, 82C05, 82C22, 82C70, 92D50.
\vskip 0.4cm

\section{Introduction}
\label{sec-intro}

In this work we provide a complete and rigorous description of phase transitions in a general class of kinetic models describing self-propelled particles interacting through alignment.
These models have broad applications in physics, biology and social sciences for instance for the description of animal swarming behavior or opinion consensus formation.
Their essential feature is the competition between the alignment process which provides self-organization, and noise which destroys it.
An important point is that both the alignment frequency and noise intensity depend on a measure of the local alignment denoted by~$|J|$.
The phase transition behavior in the spatially homogeneous case is totally encoded in the ratio between these two functions denoted by~$k(|J|)$.
Namely we have the following features:
\begin{itemize}
\item[(i)] The function~$k$ gives rise to an algebraic compatibility relation whose roots provide the different branches of equilibria of the kinetic model.
One distinguished branch is given by isotropic or uniform distributions which correspond to no alignment at all, i.e.~$|J|=0$.
The other branches are associated to non-isotropic von Mises--Fisher distributions associated to non-zero~$|J|$.
\item[(ii)] The stability of these various equilibria is completely determined by the monotonicity of a function derived from~$k$ around these roots and there exists an exponential rate of local convergence of the solution to one of these stable equilibria. 

\item[(iii)] The global shape of this function~$k$ provides the phase diagram which encodes the order of the associated phase transitions.
According to its monotonicity, these can be second-order phase transitions, first-order phase transitions with hysteresis behavior or even be more complex.
For second-order phase transition, we give an explicit formula for the critical exponent in terms of the local behavior of~$k$.
The involved phase transitions are spontaneous symmetry breaking phase transitions between isotropic and non-isotropic equilibria.
Such phase transitions appear in many branches of physics, such as spontaneous magnetization in ferromagnetism, nematic phase transition in liquid crystals and polymers, Higgs mechanism of mass generation for the elementary particles.
\item[(iv)] In the spatially inhomogeneous case, we can derive the hydrodynamic equations associated to both the isotropic and non-isotropic stable equilibria (the former leading to diffusion behavior, the latter to hyperbolic models).
The hyperbolicity is again completely determined by this function, and is linked to the critical exponent in the case of a second-order phase transition.
\end{itemize}
To our knowledge, this is the first time that a complete mathematical theory of phase transitions in a physics system can be rigorously derived and related to one single object with high physical significance: this function~$k$.
One of the main achievement of this work is Theorem~\ref{thm-strong-stability-anisotropic}, which provides part of point~$(ii)$ above, namely the nonlinear stability of the non-isotropic equilibria (the von Mises--Fisher distributions) when the function associated to~$k$ is increasing.
To be more precise, let us write this set of equilibria as~$\{f^{eq}_Ω,Ω∈𝕊\}$ (it has the same symmetries as the unit sphere~$𝕊$ of~$ℝ^n$,~$n$ being the dimension of the model). Then, we have a rate of convergence~$λ$ and two positive constants~$δ$ and~$C$ such that, if the initial condition~$f_0$ satisfies~$∥f_0-f_{Ω}^{eq}∥<δ$ for some~$Ω∈𝕊$, then there exist~$Ω_∞∈𝕊$ such that for all~$t>0$, the solution~$f$ of the spatially homogeneous model satisfies
\[∥f(t)-f_{Ω_∞}^{eq}∥⩽C∥f_0-f_{Ω}^{eq}∥ \,e^{-λt}.\]
This stability result takes place in the Sobolev space~$H^s$ as long as~$s>\frac{n-1}2$.
In previous works (in the case where the function~$k$ is linear) such as~\cite{frouvelle2012dynamics}  or~\cite{giacomin2012global} (for the Kuramoto model in dimension~$n=2$, where a precise study of the attractor is performed), the exponential convergence with rate~$β$ was only proven for all~$β<λ$, and the existence of such a constant~$C$ independent of~$f_0$ was lacking. 

Self-propelled particle systems interacting through alignment have been widely used in the modeling of animal swarms (see e.g.
the review~\cite{vicsek2012collective} and~\cite{bertin2009hydrodynamic, chuang2007state, cucker2007emergent, toner2005hydrodynamics, vicsek1995novel}).
Kinetic models of self-propelled particles have been introduced and studied in~\cite{ bolley2012meanfield, carrillo2010asymptotic, degond2008continuum, ha2009simple, ha2008particle}.
Here, specifically, we are interested in understanding phase transitions and continuum models associated to the Vicsek particle system~\cite{vicsek1995novel}.
Phase transitions in the Vicsek system have been widely studied in the physics literature~\cite{aldana2009emergence, chate2008collective}.
There have been some controversies whether the involved phase transitions were first or second order.
In some sense, this paper provides a complete answer to this question, at least in the kinetic framework.

The passage from the kinetic to macroscopic descriptions of the Vicsek system has first been  proposed in~\cite{degond2008continuum}.
Further elaboration of the model can be found in~\cite{degond2012hydrodynamic, frouvelle2012continuum}.
The resulting continuum model is now referred to as the Self-Organized Hydrodynamic (SOH) model.
In these derivations of the SOH, the noise and alignment intensities are functions of the local densities and not of the local alignment.
No phase transition results from this choice but the resulting SOH models are hyperbolic.
In~\cite{frouvelle2012dynamics, degond2012macroscopic}, alignment intensity has been made proportional to the local alignment.
Second-order phase transition have been obtained.
However, the resulting SOH model is not hyperbolic.
In the present paper, we investigate general relations between the noise and alignment intensities upon the local alignment~$|J|$.
As described above, the phase diagram becomes extremely complex and its complexity is fully deciphered here.
The kind of alignment phase transition that we find here is similar to nematic phase transitions in liquid crystals, polymer dynamics and ferromagnetism~\cite{constantin2004asymptotic, doi1999theory, fatkullin2005critical, maier1958eine, onsager1949effects}.

The organization of the paper is as follows.
In section~\ref{sec-kinetic}, we derive the kinetic model from the particle system and determine its equilibria.
In section~\ref{sec-rates-convergence}, we study the stability of these equilibria in the spatially homogeneous case and find the rates of convergences of the solution to the stable ones.
Then, in section~\ref{sec-phase-transition}, we use these results to study two examples respectively leading to second order and first order phase transitions, and in the case of first order phase transitions, to the hysteresis phenomenon.
Finally, in section~\ref{sec-macro-models}, we return to the spatially inhomogeneous case and investigate the macroscopic limit of the kinetic model towards hydrodynamic or diffusion models according to the considered type of equilibrium.
For the hydrodynamic limit, we provide conditions for the model to by hyperbolic.
Finally, a conclusion is drawn in section~\ref{sec-conclu}.
We supplement this paper with appendix~\ref{sec-numeric} which provides elements on the numerical simulation of the hysteresis phenomenon.

\section{Kinetic model and equilibria}
\label{sec-kinetic}

In this section, we derive the mean-field kinetic model from the particle system, and determine its equilibria.
We begin with the particle model in the next section.
Then, in section~\ref{subsec-mean-field} we derive the mean-field limit.
The space-homogeneous case will be highlighted in section~\ref{subsec-space-homogeneous} and the equilibria will be determined in section~\ref{subsec-equilibria}.

\subsection{The particle model}
\label{sec-discrete}

We consider a system of a large number~$N$ of socially interacting agents defined by their positions~${X_i∈ℝ^n}$ and the directions of their velocities~$ω_i∈𝕊$ (where~$𝕊$ is the unit sphere of~$ℝ^n$).
They obey the following rules, which are a time continuous version of those of the Vicsek model~\cite{vicsek1995novel}:
\begin{itemize}
\item[-] they move at constant speed~$a$,
\item[-] they align with the average direction of their neighbors, as a consequence of the social interaction.
\item[-] the directions of their velocities are subject to independent random noises, which expresses either some inaccuracy in the computation of the social force by the subject, or some trend to move away from the group in order to explore the surrounding environment.
\end{itemize}
These rules are expressed by the following system of stochastic differential equations:
\begin{align}
&\d X_i=a\, ω_i\d t, \label{particle-model-dX}\\
&\d ω_i=ν(|\mathcal J_i|) P_{ω_i^⊥}\,\bar{ω}_i \d t + √{2τ(|\mathcal J_i|)}P_{ω_i^⊥}∘\d B^i_t,\label{particle-model-domega}\\
&\bar{ω}_i=\frac{\mathcal J_i}{|\mathcal J_i|}, \quad \mathcal J_i = \frac{a}N\sum_{ℓ=1}^N K(|X_ℓ-X_i|) \,ω_ℓ.
\label{particle-model-omega-J}
\end{align}
 Eq.~\eqref{particle-model-dX} simply translates that particle~$i$ moves with velocity~$a\,ω_i$.
The first term at the right-hand side of~\eqref{particle-model-domega} is the social force, which takes the form of a relaxation of the velocity direction towards the mean direction of the neighbors~$\bar{ω}_i$, with relaxation rate~$ν$ (the operator~$P_{ω_i^⊥}$ is the projection on the tangent space orthogonal to~$ω_i$, ensuring that~$ω_i$ remains a unit vector).
Eq.~\eqref{particle-model-omega-J} states that the mean direction is obtained through the normalization of the average current~$\mathcal J_i$, itself computed as the average of the velocities of the particles.
This average is weighted by the observation kernel~$K$, which is a function of the distance between the test particle~$i$ and its considered partner~$ℓ$.
Without loss of generality, we can assume that~$∫ K(|ξ|) \, \d ξ = 1$.
The second term of~\eqref{particle-model-domega} models the noise in the velocity direction.
Eq.~\eqref{particle-model-domega} must be understood in the Stratonovich sense (as indicated by the symbol~$∘$), with~$N$ independent standard Brownian motions~$B^i_t$ in~$ℝ^n$.
The quantity~$τ>0$ is the variance of the Brownian processes.

In this paper, we assume that the relaxation rate~$ν$ and the noise intensity~$τ$ are functions of the norm of the current~$|\mathcal J|$.
The present hypothesis constitutes a major difference with previous works.
Indeed, the case where~$ν$ and~$τ$ are constant has been investigated in~\cite{degond2008continuum}, while the case where~$ν(|\mathcal J|)=|\mathcal J|$ and~$τ=1$ has been treated in~\cite{degond2012macroscopic}.
We recall that no phase transition appears at the macroscopic level in the first case while in the second case, a phase transition appears.
This phase transition corresponds to a change in the number of equilibria as the density crosses a certain threshold called critical density.
The critical exponent is~$1/2$ in this case.
Here, we investigate the more general case of almost arbitrary dependences of~$ν$ and~$τ$ upon~$|\mathcal J|$, and show that the phase transition patterns can be much more complex than those found in~\cite{degond2012macroscopic}.
For later convenience, we will denote by~$τ_0>0$ the value of~$τ(0)$.

To understand why~$|\mathcal J|$ is the crucial parameter in this discussion, let us introduce the local density~$ρ_i$ and order parameter (or mean alignment)~$c_i$ as follows:  
\[c_i=\frac{|\mathcal J_i|}{a\, ρ_i}, \quad ρ_i = \frac1N\sum_{ℓ=1}^N K(|X_ℓ-X_i|),\]
and we note that~$0⩽ c_i⩽1$.
The value~$c_i\sim 0$ corresponds to disorganized motion, with an almost isotropic distribution of velocity directions, while~$c_i\sim 1$ characterizes a fully organized system where particles are all aligned.
Therefore~$|\mathcal J_i|$ appears as the ``density of alignment'' and increases with both particle density and order parameter.
This paper highlights that the dependence of~$ν$ and~$τ$ upon~$|\mathcal J_i|$ acts as a positive feedback which triggers the phase transition.
Besides, in~\cite{frouvelle2012continuum}, it has been shown that making~$ν$ and~$τ$ depend on the density~$ρ$ only does not produce any phase transition, and that the recovered situation is qualitatively similar to that of~\cite{degond2008continuum}.
The present work could be extended to~$ν$ and~$τ$ depending on both~$ρ$ and~$|\mathcal J|$ at the expense of an increased technicality, which will be omitted here.
The present framework is sufficient to cover all interesting situations that can be desirable at the macroscopic scale.

\subsection{Mean-field derivation of the kinetic model}
\label{subsec-mean-field}

The first step in the study of the macroscopic behaviour of this system consists in considering a large number of particles.
In this limit, we aim at describing the evolution of the density probability function~$f^N(x,ω,t)$ of finding a particle with direction~$ω$ at position~$x$.
This has been studied in~\cite{bolley2012meanfield} in the case where~$ν(|\mathcal J|)=|\mathcal J|$ and~$τ=1$.
It is nearly straightforward to perform the same study in our more general case.

For convenience, we will use the following notation for the first moment of a function~$f$ with respect to the variable~$ω$ (the measure on the sphere is the uniform measure such that~$∫_𝕊\dω=1$):
\begin{equation}
\label{def-J}
J_f(x,t)=∫_{ ω ∈ 𝕊 }  ω \, f(x, ω,t) \, \dω.
\end{equation}
For the following, we will assume that:
\begin{hypothesis}~
\label{hyp-prop-chaos}
\begin{enumerate}
\item[(i)] The function~$K$ is a Lipschitz bounded function with finite second moment.
\item[(ii)] The functions~$|J|↦\frac{ν(|J|)}{|J|}$ and~$|J|↦τ(|J|)$ are Lipschitz and bounded.
\end{enumerate}
\end{hypothesis}

In these conditions the mean-field limit of the particle model is the following kinetic equation, called Kolmogorov--Fokker--Planck equation:
\begin{gather}
\label{KFP-meanfield}
∂_t f + a\, ω · ∇_x f + ν(|\mathcal J_f|) ∇_ω · (P_{ω^⊥} \bar{ω}_f \, f) = τ(|\mathcal J_f|) Δ_ω f
\intertext{with}
\label{def-mcJ-omega-bar}
\mathcal J_f(x,t) = a\, (K*J_f) (x,t)\,,\quad \bar{ω}_f=\frac{\mathcal J_f}{|\mathcal J_f|} ,
\end{gather}
where~$*$ denotes the convolution in~$ℝ^n$ (only on the~$x$ variable),~$Δ_ω$ and~$∇_ω·$ stand for the Laplace-Beltrami and divergence operators on the sphere~$𝕊$.

More precisely, the following statements hold:

\begin{proposition}
\label{prop-meanfield-limit}
If~$f_0$ is a probability measure on~$ℝ^n\times𝕊$ with finite second moment in~$x∈ℝ^n$, and if~$(X_i^0,ω_i^0)_{i∈⟦1,N⟧}$ are~$N$ independent variables with law~$f_0$, then: 

\begin{enumerate}
\item[(i)] There exists a pathwise unique global solution~$f$ to the particle system~\eqref{particle-model-dX}-\eqref{particle-model-omega-J} with initial data~$(X_i^0,ω_i^0)$.
\item[(ii)] There exists a unique global weak solution of the kinetic equation~\eqref{KFP-meanfield} with initial data~$f_0$.
\item[(iii)] The law~$f^N$ at time~$t$ of any of one of the processes~$(X_i,ω_i)$ converges to~$f$ as~$N→∞$.
\end{enumerate}
\end{proposition}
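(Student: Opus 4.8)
The plan is to follow the now-standard coupling strategy for mean-field limits of stochastic particle systems with smooth-in-$|\mathcal J|$ coefficients, adapting the argument of \cite{bolley2012meanfield} (which treated the case $\nu(|\mathcal J|)=|\mathcal J|$, $\tau=1$) to the general coefficients allowed under Hypothesis \ref{hyp-prop-chaos}. The three assertions will be handled in turn, with the bulk of the work in (i) and (iii).

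For (i), I would first observe that the only potential source of non-Lipschitz behaviour in the particle drift is the normalization $\bar\omega_i=\mathcal J_i/|\mathcal J_i|$, which is singular at $\mathcal J_i=0$. The key trick, already exploited in the references, is that the drift term actually entering \eqref{particle-model-domega} is $\nu(|\mathcal J_i|)P_{\omega_i^\perp}\bar\omega_i = \frac{\nu(|\mathcal J_i|)}{|\mathcal J_i|}P_{\omega_i^\perp}\mathcal J_i$, and by Hypothesis \ref{hyp-prop-chaos}(ii) the scalar factor $|J|\mapsto \nu(|J|)/|J|$ extends to a bounded Lipschitz function on $[0,\infty)$; likewise $\tau(|\mathcal J_i|)$ is bounded Lipschitz. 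Since $\mathcal J_i$ is a smooth (linear, with Lipschitz weights $K$) function of the $\omega_\ell$'s, the full drift and diffusion coefficients of the system \eqref{particle-model-dX}--\eqref{particle-model-omega-J}, viewed as an SDE on $(\mathbb R^n\times\mathbb S)^N$, are globally Lipschitz and bounded; the state space $\mathbb S^N$ for the $\omega$-variables is compact and the Stratonovich-to-It\^o correction is smooth on the sphere. Standard SDE theory then gives pathwise uniqueness and global existence. One should also check that the $\omega_i$ stay on $\mathbb S$: this follows because $P_{\omega_i^\perp}$ annihilates the radial direction, so $|\omega_i|^2$ is constant along the Stratonovich flow.

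For (ii), existence of a global weak solution to \eqref{KFP-meanfield} with a finite-second-moment probability initial datum follows either as a by-product of the propagation-of-chaos argument (the limit point is a weak solution) or directly by a fixed-point argument in the space of probability measures on $\mathbb R^n\times\mathbb S$ with finite second moment, using the Wasserstein-$2$ distance: given a candidate flow $t\mapsto f_t$, the coefficients $\nu(|\mathcal J_{f_t}|)$, $\tau(|\mathcal J_{f_t}|)$, $\bar\omega_{f_t}$ are determined (again using that $K*J_f$ is controlled by the total mass and that $\nu(|J|)/|J|$ is Lipschitz bounded), one solves the resulting linear Kolmogorov equation, and one shows the solution map is a contraction on a short time interval, then iterates. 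Uniqueness comes from a Gr\"onwall estimate on the $W_2$ distance between two solutions, exploiting the Lipschitz dependence of the coefficients on $\mathcal J_f$ and hence on $f$.

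For (iii), the heart of the matter, I would introduce the standard nonlinear McKean--Vlasov process: $N$ i.i.d. copies $(\bar X_i,\bar\omega_i)$ driven by the same Brownian motions $B^i$, with law $f_t$ at time $t$, and couple each particle $(X_i,\omega_i)$ to its mean-field companion. Writing the difference of the two SDEs and applying It\^o's formula to $|X_i-\bar X_i|^2+|\omega_i-\bar\omega_i|^2$ (the latter understood intrinsically on $\mathbb S$, or extrinsically with care about the projection terms), one gets a Gr\"onwall-type inequality whose right-hand side involves (a) the Lipschitz constants of all coefficients, controlling the "diagonal" terms, and (b) a fluctuation term measuring the difference between the empirical current $\mathcal J_i$ built from the true particles and the nonlinear current $\mathcal J_{f}$; the latter is handled by the law of large numbers, which gives a bound of order $N^{-1/2}$ on $\mathbb E|\frac1N\sum_\ell K(|X_\ell-X_i|)\omega_\ell - (K*J_f)(X_i)|$ since the $K(|\bar X_\ell-\bar X_i|)\bar\omega_\ell$ are i.i.d. bounded with the right mean. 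Gr\"onwall's lemma then yields $\sup_{t\le T}\mathbb E[|X_i(t)-\bar X_i(t)|^2+|\omega_i(t)-\bar\omega_i(t)|^2]\le C(T)N^{-1}$, and since the law of $(\bar X_i,\bar\omega_i)$ is exactly $f_t$, the marginal $f^N$ of $(X_i,\omega_i)$ converges to $f$ (e.g. in $W_2$, hence weakly).

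I expect the main technical obstacle to be the control of the singular normalization $\bar\omega=\mathcal J/|\mathcal J|$ when the current is small: one cannot naively Lipschitz-bound $\mathcal J\mapsto\mathcal J/|\mathcal J|$ near the origin. The resolution — and the reason Hypothesis \ref{hyp-prop-chaos}(ii) is phrased in terms of $\nu(|J|)/|J|$ rather than $\nu(|J|)$ — is to always keep the combination $\frac{\nu(|\mathcal J|)}{|\mathcal J|}\mathcal J$ together, which is globally Lipschitz in $\mathcal J$; this is exactly the structural observation that makes the whole coupling argument go through, and it must be used consistently in the uniqueness proof of (ii) and the Gr\"onwall estimate of (iii) as well. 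Beyond that, the estimates are routine given the boundedness of $\mathbb S$ and the finite-second-moment assumption propagating in $x$ (which itself needs a brief separate Gr\"onwall argument since $x$ moves at bounded speed $a$).
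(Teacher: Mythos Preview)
Your proposal is correct and follows exactly the approach the paper has in mind: the paper explicitly omits the proof, stating only that it ``follows exactly the study performed in~\cite{bolley2012meanfield}, using auxiliary coupling processes as in the classical Sznitman's theory'', which is precisely the McKean--Vlasov coupling argument you outline, including the key structural observation that one must work with the globally Lipschitz quantity $\frac{\nu(|\mathcal J|)}{|\mathcal J|}\mathcal J$ rather than with $\bar\omega$ and $\nu$ separately. Your sketch is in fact considerably more detailed than what the paper provides.
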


The proof of this proposition follows exactly the study performed in~\cite{bolley2012meanfield}, using auxiliary coupling processes as in the classical Sznitman’s theory (see~\cite{sznitman1991topics}), and is omitted here.
Let us make some comment on the structure of the kinetic equation~\eqref{KFP-meanfield}.
The first two terms of the left hand side of~\eqref{KFP-meanfield} correspond to the free transport with speed given by~$a\,ω$.
It corresponds to~\eqref{particle-model-dX} in the particle model.
The last term of the left hand side corresponds to the alignment mechanism towards the target orientation~$\bar{ω}_f$, with intensity~$ν(|\mathcal J_f|)$, while the term at the right hand side is a diffusion term in the velocity variable, with intensity~$τ(|\mathcal J_f|)$.
These two terms correspond to~\eqref{particle-model-domega} in the particle model.
We will see in~\eqref{KFP-homogeneous} and~\eqref{KFP-meanfield-eps} that these two terms, under certain assumptions (spatially homogeneous case, or expansion in terms of a scaling parameter~$η$), behave as a local collision operator~$Q$, only acting on the velocity variable~$ω$.
Finally, the convolution with~$K$ in~\eqref{def-mcJ-omega-bar} expresses the fact that~$\mathcal J_f$ is a spatial averaging of the local momentum~$J_f$ defined in~\eqref{def-J}, it corresponds to the definition~\eqref{particle-model-omega-J} in the particle model.

\subsection{The space-homogeneous kinetic model}
\label{subsec-space-homogeneous}

The hydrodynamic limit involves an expansion of the solution around a local equilibrium (see section~\ref{subsec-hydro-scaling}).
Therefore, local equilibria of the collision operator~$Q$ are of key importance.
We will see that such equilibria are not unique.
The existence of multiple equilibria requires an a priori selection of those equilibria which make sense for the hydrodynamic limit.
Obviously, unstable equilibria have to be ignored because no actual solution will be close to them.
In order to make this selection, in the present section, we consider the spatially homogeneous problem.
To the most possible exhaustive way, in section~\ref{sec-rates-convergence}, we will determine the stable equilibria and characterize the convergence rate of the solution of the space-homogeneous problem to one of these equilibria.
In section~\ref{sec-phase-transition}, we will illustrate these results on two examples.
Finally, in section~\ref{sec-macro-models}, we will deal with the spatially non-homogeneous case and apply the conclusions of the spatially homogeneous study.

The spatially homogeneous version of this model consists in looking for solutions of the kinetic equation~\eqref{KFP-meanfield} depending only on~$ω$ and~$t$.
Obviously, such solutions cannot be probability measures on~$ℝ^n×𝕊$ any more, so we are looking for solutions which are positive measures on~$𝕊$.
In that case,~${\mathcal{J}_f= a J_f}$, and (up to writing~$\hat{ν}(|J_f|)=ν(a|J_f|)$ and~$\hat{τ}(|J_f|)=τ(a|J_f|)$) the kinetic equation~\eqref{KFP-meanfield} reduces to
\begin{equation}
\label{KFP-homogeneous}
∂_t f  = Q(f),
\end{equation}
where the operator~$Q$ is defined by
\begin{equation}
\label{def-Q}
Q(f)= - ν(|J_f|)∇_ω · (P_{ω^⊥} Ω_{f} \, f) + τ(|J_f|)Δ_ω f,
\end{equation}
where~$Ω_f=\frac{J_f}{|J_f|}$ and where we have dropped the ``hats'' for the sake of clarity.
Let us remark that by hypothesis~\ref{hyp-prop-chaos}, we do not have any problem of singularity of~$Q$ as~$|J_f|→0$: if~$|J_f|=0$, we simply have~$Q(f)=τ_0Δ_ω f$.

The investigation of the properties of the operator~$Q$ is of primary importance, as we will see later on.
For later usage, we define
\begin{equation}
\label{def-k-phi}
k(|J|)=\frac{ν(|J|)}{τ(|J|)}, \quad Φ(r)=∫_0^r k(s)\d s,
\end{equation}
so that~$Φ(|J|)$ is an antiderivative of~$k$:~$\frac{\d Φ}{\d |J|}=k(|J|)$. The space-homogeneous dynamics corresponds to the gradient flow of the following free energy functional:
\begin{equation}
\label{def-free-energy}
\mathcal F(f)=∫_𝕊f\ln f\, \d ω - Φ(|J_f|).
\end{equation}
Indeed, if we define the dissipation term~$\mathcal D(f)$ by
\begin{equation}
\label{def-dissipation}
\mathcal D(f)=τ(|J_f|)∫_𝕊f\,|∇_ω(\ln f-k(|J_f|)\,ω\cdotΩ_f)|^2\, \d ω,
\end{equation}
we get the following conservation relation:
\begin{equation}
\label{dissipation-free-energy}
\frac{\d}{\d t} \mathcal F(f)=-\mathcal D(f)⩽0.
\end{equation}
The main ingredient to derive this relation is the identity~$P_{ω^⊥}Ω_f=∇_ω(ω·Ω_f)$.
Therefore, the collision operator~$Q$ defined in~\eqref{def-Q} can be written:
\begin{equation}
\label{eq-Q-div1}
Q(f)=τ(|J_f|)∇_ω·\big[f\,∇_ω(\ln f - k(|J_f|)\,ω·Ω_f)\big].
\end{equation}
Finally, since
\[\frac{\d}{\d t}\mathcal F=∫_𝕊 ∂_t f (\ln f - k(|J_f|)\,ω·Ω_f) \,\d ω\, ,\]
using~\eqref{KFP-homogeneous},~\eqref{eq-Q-div1} and integrating by parts, we get~\eqref{dissipation-free-energy}.

We first state results about existence, uniqueness, positivity and regularity of the solutions of~\eqref{KFP-homogeneous}.
Under hypothesis~\ref{hyp-prop-chaos}, we have the following
\begin{theorem}
\label{theorem-existence-uniqueness}
Given an initial finite nonnegative measure~$f_0$ in~$H^s(𝕊)$, there exists a unique weak solution~$f$ of~\eqref{KFP-homogeneous} such that~$f(0)=f_0$.
This solution is global in time.
Moreover,~$f∈ C^1(ℝ^*_+,C^∞(𝕊))$, with~$f(ω,t)>0$ for all positive~$t$.

Finally, we have the following instantaneous regularity and uniform boundedness estimates (for~$m∈ℕ$, the constant~$C$ being independent of~$f_0$):
\[∥f(t)∥^2_{H^{s+m}}⩽ C\left(1+\frac1{t^m}\right)∥f_0∥^2_{H^{s}}.\]
\end{theorem}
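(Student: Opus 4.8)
The plan is to treat \eqref{KFP-homogeneous} as a quasilinear parabolic equation on the compact manifold $\mathbb{S}$ and run a standard fixed-point/energy argument, with special care given to the fact that the coefficients $\nu(|J_f|)$, $\tau(|J_f|)$, $\Omega_f$ depend on $f$ only through the \emph{finite-dimensional} quantity $J_f \in \mathbb{R}^n$. First I would set up local existence: freeze $J \in \mathbb{R}^n$, so that \eqref{def-Q} becomes a \emph{linear} Fokker--Planck operator with smooth, bounded coefficients (here one uses that if $|J|=0$ the drift term simply vanishes and $Q(f)=\tau_0\Delta_\omega f$, so there is no singularity), solve the corresponding linear Cauchy problem in $H^s(\mathbb{S})$ by semigroup theory, and then close a contraction in $C([0,T];H^s)$ for small $T$ using Hypothesis~\ref{hyp-prop-chaos}(ii): the maps $|J|\mapsto \nu(|J|)/|J|$ and $|J|\mapsto\tau(|J|)$ are Lipschitz and bounded, and $J_f$ depends continuously (indeed Lipschitz) on $f$ in any reasonable norm since it is just a first moment, so $f\mapsto Q(f)$ is locally Lipschitz from $H^s$ into $H^{s-2}$. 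This gives a unique local weak solution; $s>\frac{n-1}{2}$ is used so that $H^s(\mathbb{S})$ is an algebra and the product/composition estimates go through.

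Next I would establish the conserved quantities and a priori bounds that promote the local solution to a global one. Integrating \eqref{KFP-homogeneous} over $\mathbb{S}$ shows the total mass $\int_\mathbb{S} f\,\d\omega$ is constant; since $Q$ is in divergence form \eqref{eq-Q-div1}, positivity of $f_0$ is propagated (by a maximum principle for the linear frozen-coefficient equation, then passing to the limit in the fixed point), so $f(t)$ stays a finite nonnegative measure and in particular $\|f(t)\|_{L^1}=\|f_0\|_{L^1}$ is controlled. The free energy identity \eqref{dissipation-free-energy} gives $\mathcal{F}(f(t))\leq\mathcal{F}(f_0)$; combined with mass conservation and the boundedness of $\Phi$ on the bounded range of $|J_f|\leq a\|f_0\|_{L^1}$ (again Hypothesis~\ref{hyp-prop-chaos}(ii)), this yields a uniform bound on $\int_\mathbb{S} f\ln f\,\d\omega$, hence weak compactness. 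These low-order controls, together with the parabolic smoothing, prevent blow-up of the $H^s$ norm in finite time, so the solution is global; uniqueness is inherited from the local contraction argument via a Gronwall estimate.

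The instantaneous smoothing estimate
\[
\|f(t)\|_{H^{s+m}}^2 \leq C\Bigl(1+\tfrac{1}{t^m}\Bigr)\|f_0\|_{H^s}^2
\]
I would prove by a bootstrap of weighted energy estimates. For the step from $H^s$ to $H^{s+1}$: apply $\Lambda^{s}$ (a fractional derivative on $\mathbb{S}$, e.g.\ built from powers of $1-\Delta_\omega$) to the equation, pair with $\Lambda^s f$, and use the strict parabolicity $\tau(|J_f|)\geq \tau_{\min}>0$ (which holds since $\tau$ is bounded below on the relevant bounded interval, $\tau(0)=\tau_0>0$ and continuity) to get a dissipation term $\tau_{\min}\|f\|_{H^{s+1}}^2$ on the right; the drift term is handled by commutator (Kato--Ponce type) estimates on $\mathbb{S}$ and absorbed using $s>\frac{n-1}{2}$, with the lower-order junk controlled by $\|f\|_{H^s}^2$, which is already bounded by the previous step. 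Then multiply this differential inequality by the weight $t$ and integrate to gain one power of $1/t$; iterating $m$ times with weights $t^j$ yields the stated estimate, with $C$ depending only on the structural constants (not on $f_0$) because all nonlinear coefficients are evaluated at $|J_f|$, which lives in a fixed compact set determined by $\|f_0\|_{L^1}$ — and, if one wants $C$ truly universal, one absorbs the $L^1$-dependence into the statement's implicit normalization. Finally, $\|f\|_{H^{s+m}}\in L^2_{loc}(\mathbb{R}_+^*)$ for all $m$ upgrades to $f\in C^1(\mathbb{R}_+^*,C^\infty(\mathbb{S}))$ by Sobolev embedding and a time-regularity bootstrap using the equation itself.

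The main obstacle is the smoothing estimate with a constant $C$ \emph{independent of $f_0$}: the naive energy method produces constants that depend on $\|f_0\|_{H^s}$ through the nonlinear terms. The key observation making this work is precisely the paper's leitmotif — the nonlinearity enters only through $J_f$, a single vector whose norm is a priori bounded by the conserved mass, so after freezing that bounded parameter the estimates are genuinely those of a linear equation with coefficients in a fixed compact set, and the only $f_0$-dependence left is the honest $\|f_0\|_{H^s}^2$ factor plus the harmless $L^1$ dependence. Carefully tracking this separation through the commutator estimates is the technical heart of the argument.
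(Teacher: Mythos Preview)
Your outline is essentially the approach the paper has in mind: it does not give an independent proof but refers to~\cite{frouvelle2012dynamics}, and the one substantive remark it adds is exactly your ``key observation'' --- positivity plus mass conservation force~$|J_f|\in[0,\rho_0]$, so the coefficients~$\tau(|J_f|)$ and~$\nu(|J_f|)/|J_f|$ live in a fixed compact set, reducing everything to a linear parabolic problem with uniformly bounded coefficients and yielding the $f_0$-independent smoothing constant.

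One small correction: you invoke~$s>\frac{n-1}{2}$ so that~$H^s(\mathbb{S})$ is an algebra, but the theorem makes no such assumption on~$s$, and none is needed. The nonlinearity is not a genuine product of~$f$ with itself; it enters only through the scalar~$|J_f|$ and the unit vector~$\Omega_f$, both of which are Lipschitz functionals of~$f$ already at the level of~$H^{-s}$ for large~$s$ (they are integrations against the fixed smooth function~$\omega$). So the commutator/product estimates in your smoothing step require no algebra property --- the drift coefficient~$\nu(|J_f|)\,\Omega_f$ is just a bounded vector depending on~$t$, and the relevant commutator is~$[\Lambda^s,\,P_{\omega^\perp}\Omega_f\cdot\nabla_\omega]$ with a \emph{fixed} smooth vector field, handled by standard estimates on~$\mathbb{S}$ with no restriction on~$s$. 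Likewise, the free-energy/entropy bound you mention is not actually needed for global existence: once~$|J_f|$ is trapped in~$[0,\rho_0]$, a direct~$H^s$ energy estimate plus Gr\"onwall already prevents blow-up.
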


The proof of this theorem follows exactly the lines of the proof given in~\cite{frouvelle2012dynamics} for the case where~$ν(|J|)=|J|$, and will be omitted here.
Let us remark that here we do not need the bounds on~$\frac{ν(|J|)}{|J|}$ and on~$τ$ provided by hypothesis~\ref{hyp-prop-chaos}, since the positivity ensures that~$|J|$ takes values in~$[0,ρ_0]$, where~$ρ_0$ is the total mass of~$f_0$ (a conserved quantity).
Therefore~$τ$ is uniformly bounded from below in time, by a positive quantity~$τ_{min}$, and~$\frac{ν(|J|)}{|J|}$ is also uniformly bounded from above in time.
Finally, the fact that~$f$ is only~$C^1$ in time comes from the fact that the proof only gives~$f∈C([0,T],H^s(𝕊))$ for all~$s$, and we use the equation to get one more derivative.
We could obtain a better time regularity at the price of a better regularity for the functions~$\frac{ν(|J|)}{|J|}$ and on~$τ$.

\subsection{Equilibria}
\label{subsec-equilibria}

We now define the von Mises--Fisher distribution which provides the general shape of the non-isotropic equilibria of~$Q$.

\begin{definition}
\label{def-vonMises}
The von Mises--Fisher distribution of orientation~$Ω∈𝕊$ and concentration parameter~$κ⩾0$ is given by:
\begin{equation}
\label{eq-vonMises}
M_{κΩ}(ω) =\frac{e^{κ\, ω · Ω}}{∫_𝕊 e^{κ\, υ · Ω}\, \dυ}.
\end{equation}
The order parameter~$c(κ)$ is defined by the relation
\begin{equation}
\label{eq-currentvM}
J_{M_{κΩ}}=c(κ)Ω,
\end{equation}
and has expression:
\begin{equation}
\label{eq-c-kappa}
c(κ)=\frac{∫_0^π \cosθ \, e^{κ \cosθ}\sin^{n-2}θ \, \dθ}{∫_0^π e^{κ \cosθ}\sin^{n-2}θ \, \dθ}.
\end{equation}
\end{definition}
The function~$c:κ∈[0,∞)↦ c(κ)∈[0,1)$ defines an increasing one-to-one correspondence.
The case~$κ=c(κ)=0$ corresponds to the uniform distribution, while when~$κ$ is large (or~$c(κ)$ is close to~$1$), the von Mises--Fisher distribution is close to a Dirac delta mass at the point~$Ω$.

For the sake of simplicity, we will assume the following:
\begin{hypothesis}
\label{hyp-increasing}
The function~$|J|↦ k(|J|)=\frac{ν(|J|)}{τ(|J|)}$ is an increasing function.
We denote by~$j$ its inverse, i.e.
\begin{equation}
\label{eq-inverse-j}
κ=k(|J|)⇔|J|=j(κ).
\end{equation}
\end{hypothesis}
This assumption is not critical.
It would be easy to remove it at the price of an increased technicality.
Additionally, it means that when the alignment of the particles is increased, the relative intensity of the social force compared to the noise is increased as well.
This can be biologically motivated by the existence of some social reinforcement mechanism.
It bears analogies with Diffusion Limited Aggregation~(see~\cite{yates2009inherent}), in which the noise intensity is decreased with larger particle density.
This can also be related with what is called ``extrinsic noise'' in~\cite{aldana2009emergence}, where the noise corresponds to some uncertainty in the particle-particle communication mechanism.
Indeed in this case, the intensity of the noise increases when~$|J|$ decreases.

The equilibria are given by the following proposition:

\begin{proposition}
\label{prop-equilibria}
The following statements are equivalent:
\begin{enumerate}
\item[(i)]~$f∈ C^2(𝕊)$ and~$Q(f)=0$.
\item[(ii)]~$f∈ C^1(𝕊)$ and~$\mathcal D(f)=0$.
\item[(iii)] There exists~$ρ⩾0$ and~$Ω∈𝕊$ such that~$f=ρ M_{κΩ}$, where~$κ⩾0$ satisfies the compatibility equation:
\begin{equation}
\label{eq-compatibility}
j(κ)=ρ c(κ).
\end{equation}
\end{enumerate}
\end{proposition}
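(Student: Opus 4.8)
The plan is to prove the three assertions equivalent by closing the cycle $(iii)\Rightarrow(i)\Rightarrow(ii)\Rightarrow(iii)$. The two ingredients are the divergence form of the collision operator from~\eqref{eq-Q-div1}, namely $Q(f)=\tau(|J_f|)\,\nabla_\omega\cdot\big[f\,\nabla_\omega g_f\big]$ with $g_f:=\ln f-k(|J_f|)\,\omega\cdot\Omega_f$, and the fact, recalled after Definition~\ref{def-vonMises}, that $c$ is an increasing bijection of $[0,\infty)$ onto $[0,1)$, with inverse $j$ defined by Hypothesis~\ref{hyp-increasing}. Throughout, $f$ denotes a nonnegative density and $\rho:=\int_{\mathbb{S}}f\,\d\omega$ its mass.

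For $(iii)\Rightarrow(i)$ I would argue directly. If $f=\rho M_{\kappa\Omega}$ with $j(\kappa)=\rho c(\kappa)$, then linearity of $f\mapsto J_f$ and~\eqref{eq-currentvM} give $J_f=\rho c(\kappa)\,\Omega=j(\kappa)\,\Omega$, so $|J_f|=j(\kappa)$ and (when $j(\kappa)>0$) $\Omega_f=\Omega$, whence $k(|J_f|)=k(j(\kappa))=\kappa$; since $\ln M_{\kappa\Omega}(\omega)=\kappa\,\omega\cdot\Omega+\mathrm{const}$, the function $g_f$ is constant, so $\nabla_\omega g_f=0$ and $Q(f)=0$ by~\eqref{eq-Q-div1}. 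The degenerate subcases ($\rho=0$, or $\kappa=0$ so that $f$ is uniform and both sides of~\eqref{eq-compatibility} vanish) give $Q(f)=\tau_0\Delta_\omega f=0$ immediately, and $M_{\kappa\Omega}\in C^\infty(\mathbb{S})$.

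For $(i)\Rightarrow(ii)$: a $C^2$ nonnegative solution of $Q(f)=0$ solves a linear, uniformly elliptic equation on $\mathbb{S}$ (its coefficients involving the now-fixed numbers $|J_f|$ and $\Omega_f$), so by the strong maximum principle either $f\equiv0$, in which case $\mathcal{D}(f)=0$ trivially, or $f>0$ everywhere; in the latter case $g_f\in C^1(\mathbb{S})$ and integrating~\eqref{eq-Q-div1} against $g_f$ over the closed manifold $\mathbb{S}$ yields
\[0=\int_{\mathbb{S}}Q(f)\,g_f\,\d\omega=-\tau(|J_f|)\int_{\mathbb{S}}f\,|\nabla_\omega g_f|^2\,\d\omega=-\mathcal{D}(f),\]
so $\mathcal{D}(f)=0$ and in particular $f\in C^1(\mathbb{S})$. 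For $(ii)\Rightarrow(iii)$: since $\tau(|J_f|)>0$, $\mathcal{D}(f)=0$ forces $\nabla_\omega g_f=0$ on the open set $\{f>0\}$, hence $f=C_i\,e^{k(|J_f|)\,\omega\cdot\Omega_f}$ on each of its connected components, with the same exponential factor everywhere because $k(|J_f|)$ and $\Omega_f$ are global constants; this expression being bounded below away from $0$, $f$ cannot vanish on the boundary of $\{f>0\}$, so $\{f>0\}$ is closed as well as open and, $\mathbb{S}$ being connected for $n\geq2$, either $f\equiv0$ (take $\rho=0$) or $f>0$ on all of $\mathbb{S}$. In the latter case $f=\rho M_{\kappa\Omega}$ with $\kappa=k(|J_f|)$ and $\Omega=\Omega_f$ (when $|J_f|=0$ one first checks, using the boundedness of $k(|J|)/|J|$ in Hypothesis~\ref{hyp-prop-chaos}, that $\mathcal{D}(f)$ reduces to $\tau_0\int_{\mathbb{S}}f|\nabla_\omega\ln f|^2\,\d\omega$, forcing $f$ uniform, $\kappa=0$); computing the current of this $f$ as above gives $|J_f|=\rho c(\kappa)$, while $\kappa=k(|J_f|)$ means $|J_f|=j(\kappa)$, so $j(\kappa)=\rho c(\kappa)$, which is~\eqref{eq-compatibility}.

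I expect the main obstacle to be the last step: upgrading the pointwise relation $\nabla_\omega g_f=0$ on $\{f>0\}$ to a statement valid on the whole sphere requires ruling out that $f$ vanishes on part of $\mathbb{S}$, which I would do via the uniform positive lower bound on $C_i\,e^{k(|J_f|)\,\omega\cdot\Omega_f}$ together with connectedness, and then closing the self-consistency loop between the explicit form of $f$ and the global quantities $|J_f|,\Omega_f$ appearing in it. A secondary care point is the degenerate value $|J_f|=0$, where $\Omega_f$ is not defined and one must verify that the drift contributions to both $Q$ and $\mathcal{D}$ vanish in the limit; the maximum-principle remark in $(i)\Rightarrow(ii)$, needed only to make $\ln f$ and $\mathcal{D}(f)$ meaningful, is comparatively minor.
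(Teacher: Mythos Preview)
Your argument is correct and complete. The route differs slightly from the paper's sketch: for the implication from $Q(f)=0$, the paper rewrites $Q$ in the form~\eqref{eq-Qdiv}, i.e.\ $Q(f)=\tau(|J_f|)\,\nabla_\omega\cdot\big[M_{k(|J_f|)\Omega_f}\,\nabla_\omega(f/M_{k(|J_f|)\Omega_f})\big]$, and tests directly against $f/M_{k(|J_f|)\Omega_f}$; this yields $\int M\,|\nabla_\omega(f/M)|^2=0$, hence $f/M$ is constant, without ever invoking positivity of $f$ or a maximum principle. You instead use the $\ln f$ form~\eqref{eq-Q-div1}, which forces you to establish $f>0$ first (via the strong maximum principle or Harnack for nonnegative solutions) before the test function $g_f$ makes sense. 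Both approaches are valid; the paper's is more economical because the quotient $f/M$ is globally defined regardless of the sign of $f$, whereas your path is longer but more explicit about the positivity and connectedness issues (which the paper's sketch simply omits for $\mathcal D(f)=0$). One minor remark: in your $(i)\Rightarrow(ii)$ step, the zero-order coefficient of the linear operator can have either sign, so the naive strong maximum principle does not apply directly; your conclusion is nonetheless correct, either via Harnack's inequality for nonnegative solutions or, equivalently, by noting that $v=f/M$ satisfies $\nabla_\omega\cdot(M\nabla_\omega v)=0$, an equation with no zero-order term.
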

\begin{proof}[Sketch of the proof] The proof is identical to that of~\cite{frouvelle2012dynamics}, and we just summarize the main ideas here.
The main ingredient is to observe that~$Q(f)$ (or~$\mathcal D(f)$) is equal to zero if and only if~$f$ is proportional to~$M_{k(|J_f|)\,Ω_f}$. This is quite straightforward for~$\mathcal D$ using~\eqref{def-dissipation}. For~$Q$, it follows from the following expression:
\begin{equation}
\label{eq-Qdiv}
Q(f)=τ(|J_f|)∇_ω·\Big[M_{k(|J_f|)\,Ω_f}∇_ω\Big(\frac{f}{M_{k(|J_f|)\,Ω_f}}\Big)\Big].
\end{equation}
This expression comes from Definition~\ref{def-vonMises}, which gives first
\[∇_ω\Big(\frac{1}{M_{k(|J_f|)\,Ω_f}}\Big)=\frac{-k(|J_f|)\,∇_ω(ω·Ω_f)}{M_{k(|J_f|)\,Ω_f}}=-\frac{k(|J_f|)}{M_{k(|J_f|)\,Ω_f}}\,P_{ω^⊥}Ω_f,\]
and therefore, applying the chain rule to the right-hand side of~\eqref{eq-Qdiv}, we recover the definition of~$Q$ given in~\eqref{def-Q}.
Hence, we obtain
\[∫_𝕊Q(f)\frac{f}{M_{k(|J_f|)\,Ω_f}}\, \d ω=-τ(|J_f|)∫_𝕊\Big|∇_ω\Big(\frac{f}{M_{k(|J_f|)\,Ω_f}}\Big)\Big|^2M_{k(|J_f|)\,Ω_f}\,\d ω.\]
So if~$Q(f)=0$, we get that~$\frac{f}{M_{k(|J_f|)\,Ω_f}}$ is equal to a constant. Conversely if~$f$ is proportional to~$M_{k(|J_f|)\,Ω_f}$, we directly get with~\eqref{eq-Qdiv} that~$Q(f)=0$.

Now if~$f$ is proportional to~$M_{k(|J_f|)\,Ω_f}$, we write~$f=ρM_{κΩ}$, with~$κ=k(|J_f|)$, which corresponds to~$|J_f|=j(κ)$ thanks to~\eqref{eq-inverse-j}. But then by~\eqref{eq-currentvM}, we get that~$|J_f|=ρc(κ)$, which gives the compatibility equation~\eqref{eq-compatibility}. Conversely, if we have~$(iii)$, we also get that~$|J_f|=ρc(κ)=j(κ)$ and so~$κ=k(|J_f|)$, which gives that~$f$ is proportional to~$M_{k(|J_f|)\,Ω_f}$.
\end{proof}

We now make comments on the solutions of the compatibility equation~\eqref{eq-compatibility}.
Let us first remark that the uniform distribution, corresponding to~$κ=0$ is always an equilibrium.
Indeed, we have~$c(0)=j(0)=0$ and~\eqref{eq-compatibility} is satisfied.
However, Proposition~\ref{prop-equilibria} does not provide any information about the number of the non-isotropic equilibria.
The next proposition indicates that two values,~$ρ_*$ and~$ρ_c$, that can be expressed through the function~$k$ only, are important threshold values for the parameter~$ρ$, regarding this number of non-isotropic equilibria.

\begin{proposition}
\label{prop-two-thresholds}
Let~$ρ>0$.
We define
\begin{align}
\label{def-rho-c}
ρ_c=\lim_{\phantom{κ∈(0,}\mathclap{κ→0}\phantom{κ_{max})}} \frac{j(κ)}{c(κ)}&=\lim_{|J|→0}\frac{|J|}{c(k(|J|))}=\lim_{|J|→0}\frac{n|J|}{k(|J|)}, \\
\label{def-rho-star}
ρ_*=\inf_{κ∈(0,κ_{max})}\frac{j(κ)}{c(κ)}&=\inf_{|J|>0}\frac{|J|}{c(k(|J|))},
\end{align}
where~$ρ_c>0$ may be equal to~$+∞$, where~$κ_{max}=\lim_{|J|→∞}k(|J|)$, and where we recall that~$n$ denotes the dimension.
Then we have~$ρ_c⩾ρ_*$, and
\begin{itemize}
\item[(i)] If~$ρ<ρ_*$, the only solution to the compatibility equation is~$κ=0$ and the only equilibrium with total mass~$ρ$ is the uniform distribution~$f=ρ$.
\item[(ii)] If~$ρ>ρ_*$, there exists at least one positive solution~$κ>0$ to the compatibility equation~\eqref{eq-compatibility}.
It corresponds to a family~$\{ρ M_{κΩ},Ω∈𝕊\}$ of non-isotropic equilibria.
\item[(iii)] The number of families of nonisotropic equilibria changes as~$ρ$ crosses the threshold~$ρ_c$ (under regularity and non-degeneracy hypotheses that will be precised in the proof, in a neighborhood of~$ρ_c$, this number is even when~$ρ<ρ_c$ and odd when~$ρ>ρ_c$).
\end{itemize}
\end{proposition}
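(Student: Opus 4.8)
The plan is to encode the entire statement in the single function $ψ(κ):=j(κ)/c(κ)$ defined for $κ∈(0,κ_{max})$, and to observe that, by Proposition~\ref{prop-equilibria} and since $c(κ)>0$ for $κ>0$, the families of non-isotropic equilibria of mass $ρ$ are in one-to-one correspondence with the positive solutions of the compatibility equation~\eqref{eq-compatibility}, i.e.\ with the solutions of $ψ(κ)=ρ$. First I would record the basic analytic facts about $ψ$. It is continuous on $(0,κ_{max})$: by Hypothesis~\ref{hyp-prop-chaos} the map $k=ν/τ$ is continuous, by Hypothesis~\ref{hyp-increasing} it is increasing, so its inverse $j$ is continuous and increasing on $[0,κ_{max})$ with $j(0)=0$, while $c$ is smooth and positive on $(0,∞)$ by~\eqref{eq-c-kappa}. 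Moreover $ψ(κ)→ρ_c$ as $κ→0^+$ by~\eqref{def-rho-c}, and $ψ(κ)→+∞$ as $κ→κ_{max}^-$, because $j(κ)→+∞$ there (being the inverse of $k$ on its full range) while $0<c<1$. The inequality $ρ_c⩾ρ_*$ is then immediate, an infimum over $(0,κ_{max})$ being no larger than the one-sided limit at $0$. Finally the chains of equalities in~\eqref{def-rho-c} and~\eqref{def-rho-star} follow from the substitution $|J|=j(κ)$ together with the Taylor expansion $c(κ)=κ/n+O(κ^3)$ as $κ→0$, obtained by expanding $e^{κ\cosθ}$ in~\eqref{eq-c-kappa} and using $∫_0^π\cos^2θ\,\sin^{n-2}θ\,\dθ=\frac1n∫_0^π\sin^{n-2}θ\,\dθ$.

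Points~$(i)$ and~$(ii)$ are short. For~$(i)$: if $ρ<ρ_*$, then since $ρ_*$ is the infimum of $ψ$ over $(0,κ_{max})$ we have $ψ(κ)>ρ$ for every $κ>0$, so~\eqref{eq-compatibility} has no positive root and, by Proposition~\ref{prop-equilibria}, the only equilibrium of mass $ρ$ is the uniform one $f=ρ$. For~$(ii)$: if $ρ>ρ_*$, pick $κ_0∈(0,κ_{max})$ with $ψ(κ_0)<ρ$; since $ψ→+∞$ near $κ_{max}$ there is also $κ_1$ with $ψ(κ_1)>ρ$, and the intermediate value theorem applied to the continuous function $ψ$ on the segment between $κ_0$ and $κ_1$ produces a root $κ_*>0$, hence the family $\{ρM_{κ_*Ω},Ω∈𝕊\}$.

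Point~$(iii)$ carries the real work. I would spell out the ``regularity and non-degeneracy'' hypotheses as: $k$, hence $ψ$, is $C^1$; $ρ_c<+∞$; and $ρ$ is a regular value of $ψ$ with $ρ \neq ρ_c$ (by Sard's lemma this holds for a.e.\ $ρ$). Under these, $ψ^{-1}(ρ)$ is contained in a compact subinterval of $(0,κ_{max})$ — since $ψ→ρ_c \neq ρ$ at one end and $ψ→+∞$ at the other — and consists of simple, hence isolated, roots, so $ψ^{-1}(ρ)=\{κ_1<\dots<κ_N\}$ is a finite set. The sign of $ψ-ρ$ reverses across each simple root; on the leftmost component $(0,κ_1)$ it equals the sign of $ρ_c-ρ$ (no root lies there and $ψ→ρ_c$), and on the rightmost component $(κ_N,κ_{max})$ it is positive (since $ψ→+∞$). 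Chaining the $N$ reversals forces $ρ_c-ρ$ and $(-1)^N$ to have the same sign, i.e.\ $N$ is even when $ρ<ρ_c$ and odd when $ρ>ρ_c$; in particular the count changes as $ρ$ crosses $ρ_c$. The one genuine obstacle I anticipate is hypothesis bookkeeping: isolating the minimal non-degeneracy assumption (essentially simplicity of the roots, the one-sided sign of $ψ-ρ_c$ near $0$ being automatic from the existence of the limit $ρ_c$), and disposing cleanly of the degenerate cases $ρ_c=+∞$, where~$(iii)$ is vacuous, and $ρ_c=ρ_*$, where the parity claim must be read on a punctured neighborhood of $ρ_c$.
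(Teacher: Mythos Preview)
Your proposal is correct and follows essentially the same approach as the paper: recast the compatibility equation as $ψ(κ)=j(κ)/c(κ)=ρ$, use $ψ→+∞$ at $κ_{max}$ together with the infimum definition of $ρ_*$ to settle~(i)--(ii) by the intermediate value theorem, and for~(iii) exploit the sign alternation of $ψ-ρ$ across simple roots, anchored by the limiting sign $\mathrm{sgn}(ρ_c-ρ)$ near $κ=0$ and the positive sign near $κ_{max}$. The paper phrases the alternation via the sign of $(j/c)'$ at successive roots rather than the sign of $ψ-ρ$ on the intervening intervals, but this is the same argument; your version is slightly more explicit about why the root set is finite (compact containment plus isolated simple zeros) and invokes Sard to justify genericity of the non-degeneracy assumption, which the paper leaves implicit.
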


\begin{proof}
Some comments are necessary about the definitions of~$ρ_c$ and~$ρ_*$.
First note that, under hypotheses~\ref{hyp-prop-chaos} and~\ref{hyp-increasing},~$k$ is defined from~$[0,+∞)$, with values in an interval~$[0,κ_{max})$, where we may have~$κ_{max}=+∞$. 
So~$j$ is an increasing function from~$[0,κ_{max})$ onto~$ℝ_+$, and this gives the equivalence between the two terms of~\eqref{def-rho-star}.
Thanks to hypothesis~\ref{hyp-prop-chaos}, we have
\[k(|J|)=\frac{ν_1}{τ_0}|J|+o(|J|)\text{ as }|J|→0,\]
with~$τ_0=τ(0)$ and~$ν_1=\lim_{|J|→0}\frac{ν(|J|)}{|J|}$, and the last term of~\eqref{def-rho-c} is well defined in~$(0,+∞]$ (we have~$ρ_c=\frac{nτ_0}{ν_1}$ if~$ν_1>0$ and~$ρ_c=+∞$ if~$ν_1=0$). 
The last equality in~\eqref{def-rho-c} comes from the fact that~$c(κ)\sim\frac1n κ$ as~$κ→∞$ (see~\cite{frouvelle2012dynamics} for instance), and the first equality comes from the correspondence~\eqref{eq-inverse-j}.
 
To investigate the positive solutions of equation~\eqref{eq-compatibility}, we recast it into:
\begin{equation}
\label{eq-compat2}
\frac{j(κ)}{c(κ)}=ρ,
\end{equation}
which is valid as long as~$κ≠0$, since~$c$ is an increasing function.
This gives points~$(i)$ and~$(ii)$: there is no solution to~\eqref{eq-compat2} if~$ρ<ρ_*$, and at least one solution if~$ρ>ρ_*$, since~$κ↦\frac{j(κ)}{c(κ)}$ is a continuous function, and its infimum is~$ρ_*$.

Let us precise now the sense of point~$(iii)$.
We fix~$ε>0$, and we suppose that~$\frac{j}{c}$ is differentiable and that for~$ρ∈(ρ_c-ε,ρ_c)∪(ρ_c,ρ_c+ε)$, all the solutions of the compatibility equation satisfy~$(\frac{j}{c})'(κ)≠0$.
Then, the number of solutions of the compatibility equation~\eqref{eq-compat2}, if finite, is odd for~$ρ∈(ρ_c,ρ_c+ε)$ and even for~$ρ∈(ρ_c-ε,ρ_c)$.

Indeed, under these assumptions, by the intermediate value theorem, the sign of~$(\frac{j}{c})'$ must be different for two successive solutions of the compatibility equation~\eqref{eq-compat2}.
Moreover, since~$j$ is unbounded (it maps its interval of definition~$[0,κ_{max})$ onto~$[0,+∞)$), we have
\begin{equation}
\label{lim-j-c-inf}
\lim_{κ→κ_{max}}\frac{j(κ)}{c(κ)}=+∞,
\end{equation}
so the sign of~$(\frac{j}{c})'$ must be positive for the greatest solution of the compatibility equation~\eqref{eq-compat2}.
Finally for the smallest solution, this sign must be the same as the sign of~$ρ-ρ_c$.
\end{proof}

Except from these facts, since~$c$ and~$j$ are both increasing, we have no further direct information about this function~$κ↦ j(κ)/c(κ)$.

\begin{figure}[h]
\centering
\input{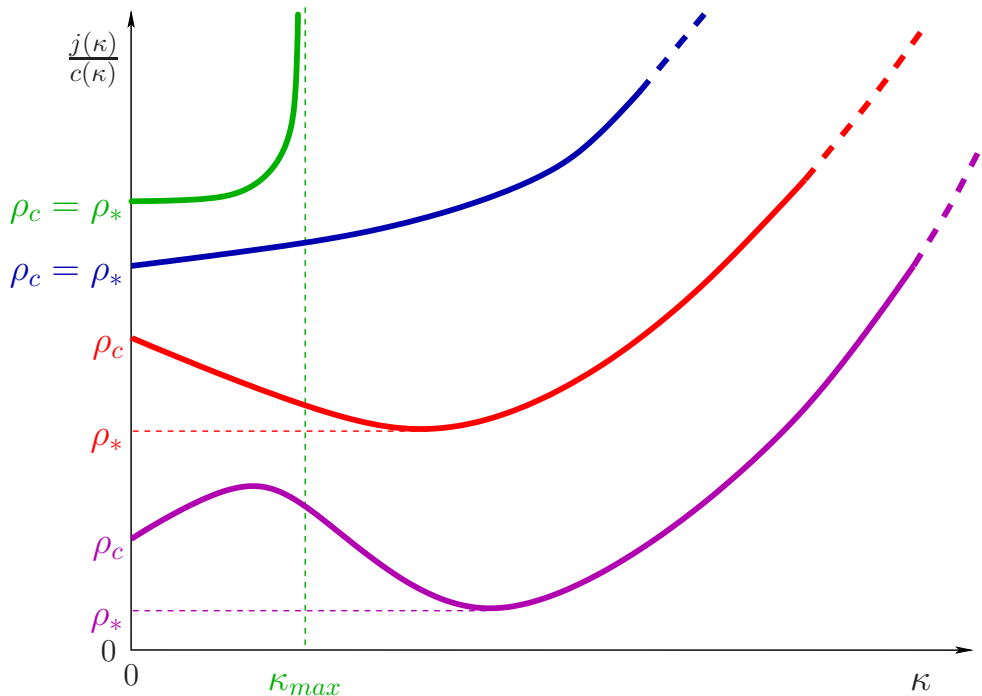}
\caption{The green, blue, red and purple curves correspond to various possible profiles for the function~$κ↦\frac{j(κ)}{c(κ)}$.}
\label{fig-shapes}
\end{figure} 

\begin{remark}
The results of Proposition~\ref{prop-two-thresholds} are illustrated by Figure~\ref{fig-shapes}: the number of families of non-isotropic equilibria is given by the cardinality of the level set at~$ρ$ of the function~$κ↦\frac{j(κ)}{c(κ)}$.
We see that depending on the value of~$ρ$, this number can be zero, one, two or even more.
The minimum of this function and its limiting value at~$κ=0$ provide a direct visualization of the thresholds~$ρ_*$ and~$ρ_c$ thanks to~\eqref{def-rho-star}-\eqref{def-rho-c}.

We will see later on that the importance of the threshold~$ρ_c$ is above all due to a loss of stability of the uniform equilibrium, more than a change in the number of families of nonisotropic equilibria.
And we will see that the sign of~$(\frac{j}{c})'(κ)$ which played a role in counting this number in the proof of point~$(iii)$ will actually play a stronger role to determine the stability of the nonisotropic equilibria.
\end{remark}

We now turn to the study of the stability of these equilibria, through the study of the rates of convergence.

\section{Stability and rates of convergence to equilibria}
\label{sec-rates-convergence}

\subsection{Main results}
\label{subsec-main}

We provide an overview of the most important results of this section.
We emphasize that the results of this section are concerned with the space-homogeneous model as reviewed in section~\ref{subsec-space-homogeneous} and~\ref{subsec-equilibria}.

The first result deals with the stability of uniform equilibria.
We prove that the critical density~$ρ_c$ defined previously at~\eqref{def-rho-c} acts as a threshold: 
\begin{itemize}
\item[(i)] if~$ρ < ρ_c$, then the uniform distribution is locally stable and we show that the solution associated to any initial distribution close enough to it converges  with an exponential rate to the uniform distribution.
\item[(ii)] if~$ρ > ρ_c$, then the uniform distribution is unstable, in the sense that no solution (except degenerate cases that we specify) can converge to the uniform distribution.
\end{itemize}

The second result deals with the stability of anisotropic equilibria.
As seen in the previous section, the anisotropic equilibria are given by the von Mises--Fisher distributions~$f = ρ M_{κ Ω}$, defined in~\eqref{eq-vonMises}, of concentration parameter~$κ$ and associated order parameter~$c(κ)$, given by the formula~\eqref{eq-c-kappa}.
Recall that~$j(κ)$ is the inverse function of~$|J| ↦ k(|J|) = \frac{ν(|J|)}{τ(|J|)}$.
We also recall that, for a von Mises--Fisher distribution to be an equilibrium, the compatibility equation~\eqref{eq-compatibility} i.e. the relation~$\frac{j(κ)}{c(κ)} = ρ$ must be satisfied.
Then:
\begin{itemize}
\item[(i)] the von Mises--Fisher equilibrium is stable if~$\big(\frac{j}{c}\big)'>0$ where the prime denotes derivative with respect to~$κ$.
Then, we have an exponential rate of convergence of the solution associated to any initial distribution close enough to one of the von Mises--Fisher distributions, to a (may be different) von Mises--Fisher distribution (with the same~$κ$ but may be different~$Ω$).
\item[(ii)] the von Mises--Fisher equilibrium is unstable if ~$\big(\frac{j}{c}\big)'<0$.
Here, the proof for instability relies on the fact that on any neighborhood of an unstable von Mises--Fisher distribution there exists a distribution which has a smaller free energy than the equilibrium free energy, which only depends on~$κ$ but not on~$Ω$.
The instability follows from the time decay of the free energy.
\end{itemize}

The main tool to prove convergence of the solution to a steady state is LaSalle’s principle.
We recall it in the next section and only sketch its proof.
Indeed, the proof follows exactly the lines of~\cite{frouvelle2012dynamics}.
Then, in section~\ref{subsec-uniform}, we consider stability and rates of convergence near uniform equilibria.
Finally, in section~\ref{subsec-anisotropic}, we investigate the same problem for non-isotropic equilibria.

\subsection{LaSalle’s principle}
\label{subsec-Lasalle}

By the conservation relation~\eqref{dissipation-free-energy}, we know that the free energy~$\mathcal F$ is decreasing in time (and bounded from below since~$|J|$ is bounded).
LaSalle’s principle states that the limiting value of~$\mathcal F$ corresponds to an~$ω$-limit set of equilibria:

\begin{proposition}
\label{prop-lasalle}LaSalle’s invariance principle: let~$f_0$ be a positive measure on the sphere~$𝕊$, with mass~$ρ$.
We denote by~$\mathcal F_∞$ the limit of~$\mathcal F(f(t))$ as~$t→∞$, where~$f$ is the solution to the mean-field homogeneous equation~\eqref{KFP-homogeneous} with initial condition~$f_0$.
Then
\begin{itemize}
\item[(i)] the set~$\mathcal E_∞=\{f ∈ C^∞(𝕊)\text{ with mass } ρ\text{ and s.t.
} \mathcal D(f)=0 \text{ and } \mathcal F(f)=\mathcal F_∞ \}$ is not empty.
\item[(ii)]~$f(t)$ converges in any~$H^s$ norm to this set of equilibria (in the following sense):
\[\lim_{t→∞}d_{H^s}(f,\mathcal E_∞)=0,\text{ where }\, d_{H^s}(f,\mathcal E_∞)=\inf_{g∈\mathcal E_∞}∥f(t)-g∥_{H^s}.\]
\end{itemize}
\end{proposition}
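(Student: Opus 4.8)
The plan is to run the classical LaSalle invariance argument along the lines of \cite{frouvelle2012dynamics}: combine the monotone decay of the free energy $\mathcal F$ with uniform higher-order Sobolev bounds on the trajectory, which yield precompactness, and then identify each accumulation point as a stationary, dissipation-free profile sitting at the level $\mathcal F_∞$. First I would record the two consequences of the dissipation identity~\eqref{dissipation-free-energy}: the entropy term $∫_𝕊 f\ln f$ is bounded below by $ρ\ln ρ$ (Jensen's inequality, using that the uniform measure on $𝕊$ has total mass $1$), while $Φ(|J_f|)$ is bounded since $|J_f|⩽ρ$ and $Φ$ is continuous; hence $t↦\mathcal F(f(t))$ is non-increasing on $(0,∞)$ and bounded below, so it converges to a limit $\mathcal F_∞$, and integrating~\eqref{dissipation-free-energy} from $t=1$ to $∞$ gives $∫_1^∞\mathcal D(f(t))\,\d t=\mathcal F(f(1))-\mathcal F_∞<∞$.

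Next I would invoke the instantaneous regularity estimate of Theorem~\ref{theorem-existence-uniqueness}: for every $m∈ℕ$ there is a constant $C_m$ with $∥f(t)∥_{H^{s+m}}⩽C_m$ for all $t⩾1$. Since $𝕊$ is compact, the embedding $H^{s+m+1}(𝕊)\hookrightarrow H^{s+m}(𝕊)$ is compact, so $\{f(t):t⩾1\}$ is relatively compact in $H^{s+m}(𝕊)$ for every $m$. Hence any sequence $t_k→∞$ has a subsequence, still denoted $(t_k)$, along which $f(t_k)$ converges in $H^{s+m}$ for every $m$ (diagonal argument); the limit $f_∞$ lies in $C^∞(𝕊)$, is nonnegative, and has mass $ρ$ because the mass is conserved by~\eqref{KFP-homogeneous}. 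In particular $f(t_k)→f_∞$ in $C^0(𝕊)$, and since $x↦x\ln x$ is continuous on $[0,∞)$ and $f↦|J_f|$ is continuous for the $C^0$ topology, $\mathcal F$ is continuous along this sequence, whence $\mathcal F(f_∞)=\lim_k\mathcal F(f(t_k))=\mathcal F_∞$.

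It then remains to show $\mathcal D(f_∞)=0$, for which I would use a time-shift argument. By the well-posedness of~\eqref{KFP-homogeneous} together with continuous dependence of the solution on its initial datum in $H^s$, uniform on the compact interval $[0,1]$ (a Gronwall estimate on the difference of two solutions, of the same kind as the one behind Theorem~\ref{theorem-existence-uniqueness}), the shifted trajectories $s↦f(t_k+s)$ converge uniformly on $[0,1]$ to the solution $g$ of~\eqref{KFP-homogeneous} with $g(0)=f_∞$. For each fixed $s∈[0,1]$, monotonicity of $\mathcal F$ gives $\mathcal F_∞⩽\mathcal F(f(t_k+s))⩽\mathcal F(f(t_k))$, hence $\mathcal F(g(s))=\lim_k\mathcal F(f(t_k+s))=\mathcal F_∞$; thus $\mathcal F(g(\cdot))$ is constant on $[0,1]$, and~\eqref{dissipation-free-energy} forces $\mathcal D(g(s))=0$ for all $s$, in particular $\mathcal D(f_∞)=\mathcal D(g(0))=0$. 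Therefore $f_∞∈\mathcal E_∞$, which proves $(i)$. Part $(ii)$ follows by contradiction: were $d_{H^s}(f(t),\mathcal E_∞)$ not to tend to $0$, there would exist $ε>0$ and $t_k→∞$ with $d_{H^s}(f(t_k),\mathcal E_∞)⩾ε$, and the precompactness above together with the identification just carried out would furnish a subsequence with $f(t_k)→f_∞∈\mathcal E_∞$ in $H^s$ — contradicting $d_{H^s}(f(t_k),\mathcal E_∞)⩾ε$.

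The a priori bounds and the compactness are routine; the main obstacle is the identification step, that is, making the time-shift argument rigorous — concretely, establishing continuous dependence on initial data uniformly in time on $[0,1]$ and checking that $\mathcal F$ and $\mathcal D$ behave continuously along the $C^∞$ convergence at hand. The latter causes no difficulty here, since Hypothesis~\ref{hyp-prop-chaos} removes any singularity of $Q$, $k$, $τ$ and $ν$ at $|J|=0$, so $\mathcal D$ is continuous for this convergence; and the continuous-dependence estimate is already contained in the well-posedness theory borrowed from \cite{frouvelle2012dynamics}. Alternatively, one could extract $t_k$ with $\mathcal D(f(t_k))→0$ directly from $∫_1^∞\mathcal D\,\d t<∞$ and pass to the limit, but one would then have to exclude a loss of coercivity of $\mathcal D$ at points where $f_∞$ might vanish; the shift argument sidesteps this.
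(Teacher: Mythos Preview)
Your proof is correct and is precisely the LaSalle argument from \cite{frouvelle2012dynamics} that the paper invokes here without further detail. One cosmetic refinement: the step ``in particular $\mathcal D(f_\infty)=\mathcal D(g(0))=0$'' should strictly be obtained from $\mathcal D(g(s))=0$ for $s\in(0,1)$ (where $g(s)>0$ is guaranteed by Theorem~\ref{theorem-existence-uniqueness}), which by Proposition~\ref{prop-equilibria} makes $g$ stationary on $(0,1)$ and hence, by continuity at $s=0$, forces $f_\infty$ to be a strictly positive equilibrium with $\mathcal D(f_\infty)=0$.
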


This result has been proved in~\cite{frouvelle2012dynamics}.
Since the different types of equilibria are known, we can refine this principle to adapt it to our problem:

\begin{proposition}
\label{prop-lasalle-refined}

Let~$f_0$ be a positive measure on the sphere~$𝕊$, with mass~$ρ$.
If no open interval is included in the set~$\{κ,ρ c(κ)=j(κ)\}$, then there exists a solution~$κ_∞$ to the compatibility solution~\eqref{eq-compatibility} such that we have:
\begin{gather}
\label{eq-limJ}
\lim_{t→∞} |J_f(t)|=ρ c(κ_∞)
\intertext{and}
\label{eq-limf}
∀ s∈ℝ, \lim_{t→∞}\,∥f(t)-ρ M_{κ_∞Ω_f(t)}∥_{H^s}=0.
\end{gather}
\end{proposition}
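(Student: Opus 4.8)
Here is the proof strategy I would pursue.

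\medskip

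The plan is to combine LaSalle's invariance principle (Proposition~\ref{prop-lasalle}) with the classification of equilibria (Proposition~\ref{prop-equilibria}) and a connectedness argument for the scalar function $t↦|J_f(t)|$. By Proposition~\ref{prop-lasalle}, $f(t)$ converges in every $H^s$ to the nonempty set $\mathcal E_∞$ of smooth densities of mass $ρ$ with $\mathcal D=0$ and $\mathcal F=\mathcal F_∞$; by the equivalence $(ii)⇔(iii)$ of Proposition~\ref{prop-equilibria}, every such density equals $ρ M_{κΩ}$ for some $Ω∈𝕊$ and some $κ$ in the closed set $S=\{κ⩾0:ρ c(κ)=j(κ)\}$ of solutions of~\eqref{eq-compatibility}. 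In particular the set of momenta $\{|J_g|:g∈\mathcal E_∞\}$ is contained in $ρ\,c(S)$. Since $\tfrac{j}{c}→+∞$ at $κ_{max}$ (see~\eqref{lim-j-c-inf}), $S$ is bounded, hence compact; and since $c$ is a homeomorphism of $[0,∞)$ onto $[0,1)$, the hypothesis that $S$ contains no open interval translates exactly into: $ρ\,c(S)$ is a compact subset of $[0,ρ)$ with empty interior in $ℝ$. I also record two soft facts used below: $g↦ J_g$ is bounded and continuous on $H^s$ (say $s⩾0$), and $(κ,Ω)↦ρ M_{κΩ}$ is $C^∞$, hence continuous, from $[0,∞)×𝕊$ into every $H^s(𝕊)$.

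\medskip

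The core step is to upgrade convergence to a set into convergence of the number $|J_f(t)|$. Choosing for each $t$ some $g(t)∈\mathcal E_∞$ with $∥f(t)-g(t)∥_{H^s}⩽ d_{H^s}(f(t),\mathcal E_∞)+\tfrac1{1+t}$, continuity of $g↦ J_g$ and Proposition~\ref{prop-lasalle}$(ii)$ give $\mathrm{dist}(|J_f(t)|,ρ\,c(S))→0$. Now $t↦|J_f(t)|$ is continuous (Theorem~\ref{theorem-existence-uniqueness}) and bounded (by the conserved mass $ρ$), so its $ω$-limit set is a nonempty, compact, \emph{connected} subset of $ℝ$ contained in $ρ\,c(S)$; a connected set contained in a set with empty interior is a single point, so $|J_f(t)|$ converges, to $ρ\,c(κ_∞)$ where $κ_∞:=c^{-1}\big(\lim_{t→∞}|J_f(t)|/ρ\big)∈S$. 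This establishes~\eqref{eq-limJ} and shows that $κ_∞$ solves the compatibility equation.

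\medskip

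To obtain~\eqref{eq-limf}, write $g(t)=ρ M_{κ(t)Ω(t)}$ with $κ(t)∈S$, $Ω(t)∈𝕊$; from $|J_{g(t)}|=ρ\,c(κ(t))$, the previous step, and continuity of $c^{-1}$, one gets $κ(t)→κ_∞$. If $κ_∞=0$ then $g(t)→ρ M_0=ρ$ uniformly in $Ω(t)$, and since $ρ M_{κ_∞Ω_f(t)}=ρ$, the triangle inequality $∥f(t)-ρ∥_{H^s}⩽∥f(t)-g(t)∥_{H^s}+∥g(t)-ρ∥_{H^s}$ yields~\eqref{eq-limf}. If $κ_∞>0$, then for large $t$ both $|J_f(t)|$ and $ρ\,c(κ(t))$ are bounded away from $0$, so $|J_f(t)-J_{g(t)}|→0$ forces $|Ω_f(t)-Ω(t)|→0$; combining this with $κ(t)→κ_∞$ and the smoothness of $(κ,Ω)↦ M_{κΩ}$ gives $∥ρ M_{κ(t)Ω(t)}-ρ M_{κ_∞Ω_f(t)}∥_{H^s}→0$, and~\eqref{eq-limf} follows once more by the triangle inequality, uniformly in $s$ since both contributions are.

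\medskip

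I expect the only genuinely delicate point — and the single place where the hypothesis that $\{κ:ρ c(κ)=j(κ)\}$ contains no open interval is used — to be the passage in the second step from ``$|J_f(t)|$ stays asymptotically close to $ρ\,c(S)$'' to ``$|J_f(t)|$ converges''. Without the empty-interior property the connected $ω$-limit set of $|J_f(t)|$ could be a nondegenerate interval and the momentum could oscillate forever within a continuum of equilibria; everything else (compactness of $S$, continuity of $J$ and of $κ↦ M_{κΩ}$, the connectedness of $ω$-limit sets of bounded continuous real functions) is routine.
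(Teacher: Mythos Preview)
Your proposal is correct and follows essentially the same route as the paper's proof. The paper makes the link between $d_{H^s}(f,\mathcal E_∞)$ and $d(|J_f|,ρ\,c(S))$ explicit via a first-spherical-harmonic estimate $\|f-ρ M_{κΩ}\|_{H^s}^2⩾ n(n-1)^s\bigl||J_f|-ρ c(κ)\bigr|^2$, whereas you invoke continuity of $g↦J_g$; and the paper phrases the derivation of~\eqref{eq-limf} as a contradiction along a sequence $t_n$, whereas you argue directly with a near-minimizer $g(t)$ --- but the logical structure (connectedness of the $ω$-limit set of $|J_f|$, then $κ(t)→κ_∞$, then control of $|Ω_f-Ω|$ when $κ_∞>0$) is identical.
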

This proposition helps us to characterize the~$ω$-limit set by studying the single compatibility equation~\eqref{eq-compatibility}.
Indeed, when~$κ_∞=0$ is the unique solution, Proposition~\ref{prop-lasalle-refined} implies that~$f$ converges to the uniform distribution.
Otherwise, two cases are possible: either~$κ_∞=0$, and~$f$ converges to the uniform distribution, or~$κ_∞>0$, and the~$ω$-limit set consists in the family of von Mises--Fisher equilibria~$\{ρ M_{κ_∞Ω},Ω∈𝕊\}$, but the asymptotic behavior of~$Ω_{f(t)}$ is unknown.

\begin{proof}
We first recall some useful formulas regarding functions on the sphere. Any function~$g$ in~$H^s$ can be decomposed~$g=\sum_ℓ g_ℓ$ where~$g_ℓ$ is a spherical harmonic of degree~$ℓ$ (an eigenvector of~$-Δ_ω$ for the eigenvalue~$ℓ(ℓ+n-2)$, which has the form of a homogeneous polynomial of degree~$ℓ$), and this decomposition is orthogonal in~$H^s$. The spherical harmonics of degree~$1$ are the functions~$ω↦ ω· A$ for~$A∈ℝ^n$, and we have
\begin{equation}
\label{eq-omega-omega}
∫_𝕊ω⊗ω\,\dω=\tfrac1n\mathrm I_n, \quad\text{i.e.}\quad∀A∈ℝ^n,∫_𝕊(A·ω)\, ω\,\d ω=\tfrac1nA.
\end{equation}
which gives that the first mode~$g_1$ of~$g$ is given by the function~$ω↦ n\,ω· J_g$, where the first moment~$J_g$ is defined in~\eqref{def-J}. We refer to the appendix of~\cite{frouvelle2012dynamics} for more details on these spherical harmonics. Another useful formula is
\begin{equation}
\label{eq-divA}
∫_𝕊ω\,∇_ω·\mathcal A(ω)\, \d ω = -∫\mathcal A(ω) \d ω,
\end{equation}
where~$\mathcal A$ is any tangent vector field (satisfying~$\mathcal A(ω)·ω=0$).

Since the decomposition in spherical harmonics is orthogonal in~$H^s$, we have a lower bound on the norm of~$f(t)-ρ M_{κΩ}$ (for~$κ⩾0$ and~$Ω∈𝕊$) with the norm of its first mode: 
\begin{align*}
∥f(t)-ρ M_{κΩ}∥^2_{H^s}&⩾∫_𝕊n\,ω·(J_f-J_{ρ M_{κΩ}})(-Δ_ω)^s[n\,ω·(J_f-J_{ρ M_{κΩ}})]\,\d ω\\
&⩾(n-1)^s∫_𝕊n^2\,[ω·(J_f-J_{ρ M_{κΩ}})]^2\,\d ω,
\end{align*}
and using~\eqref{eq-omega-omega}, we get
\begin{align}
∥f(t)-ρ M_{κΩ}∥^2_{H^s}\label{est-1stmoment}&⩾ n(n-1)^s|J_f-ρ c(κ)Ω|^2\\
\label{est-1stmomentnorm}&⩾ n(n-1)^s\bigl||J_f|-ρ c(κ)\bigr|^2.
\end{align}
Since~$\mathcal E_∞$ consists in functions of the form~$ρ M_{κΩ}$ with~$Ω∈𝕊$ and~$κ$ a solution of~\eqref{eq-compatibility} (and such that~$\mathcal{F}(ρ M_{κΩ})=\mathcal F_∞$), if we define~$S_∞=\{ρ c(κ),κ\text{ s.t.
}ρ c(κ)=j(κ)\}$, we get that the distance~$d_{H^s}(f,\mathcal E_∞)$ is greater than~$√ n(n-1)^{s/2}d(|J_f|,S_∞)$, where the notation~$d(|J_f|,\mathcal J_∞)$ denotes the usual distance in~$ℝ$ between~$|J_f|$ and the set~$S_∞$.
By LaSalle’s principle, we then have~$\lim_{t→∞}d(|J_f|,S_∞)=0$.
Since~$|J_f|$ is a continuous function, bounded in time, its limit points consist in a closed interval, which is included in~$S_∞$.
Obviously, if no open interval is included in the set of solutions to the compatibility equation~\eqref{eq-compatibility}, then no open interval is included in~$S_∞$, and the limit points of~$|J_f|$ are reduced to a single point~$ρ c(κ_∞)$.
Since~$|J_f|$ is bounded, this proves~\eqref{eq-limJ}.

Let us now suppose that~\eqref{eq-limf} does not hold.
We can find an increasing and unbounded sequence~$t_n$ such that~$∥f(t_n)-ρ M_{κ_∞Ω_f(t_n)}∥_{H^s}⩾ε$.
By LaSalle’s principle, we can find~$g_n∈\mathcal E_∞$ such that~${∥f(t_n)-g_n∥→0}$ when~$n→∞$.
Since~$g_n$ is of the form~$ρ M_{κ_nΩ_n}$, we then have by the estimation~\eqref{est-1stmomentnorm} that~$\bigl||J_{f(t_n)}|-ρ c(κ_n)\bigr|→0$, and so~$c(κ_n)→c(κ_∞)$, consequently~${κ_n→κ_∞}$.
If~$κ_∞≠0$, then we also get by~\eqref{est-1stmoment} that~$|Ω_{f(t_n)}-Ω_n|→0$, so in any case, that gives that~${∥g_n-ρ M_{κ_∞Ω_f(t_n)}∥_{H^s}→0}$ (it is equal to~$∥ρ M_{κ_nΩ_n}-ρ M_{κ_∞Ω_f(t_n)}∥_{H^s}$).
But then we obtain the convergence of~$∥f(t_n)-ρ M_{κ_∞Ω_f(t_n)}∥_{H^s}$ to~$0$, which is a contradiction.
\end{proof}

From this proposition, the asymptotic behavior of a solution can be improved in two directions.
First, as pointed above, the behavior of~$Ω_{f(t)}$ is unknown and we are left to comparing the solution to a von Mises--Fisher distribution with asymptotic concentration parameter~$κ_∞$ but local mean direction~$Ω_f(t)$, varying in time.
If we are able to prove that~$Ω_f→Ω_∞∈𝕊$, then~$f$ would converge to a fixed non-isotropic steady-state~$ρ M_{κ_∞Ω_∞}$.
The second improvement comes from the fact that Proposition~\ref{prop-lasalle-refined} does not give information about quantitative rates of convergence of~$|J_f|$ to~$ρ c(κ_∞)$, and of~$∥f(t)-ρ M_{κ_∞Ω_f(t)}∥_{H^s}$ to~$0$, as~$t→∞$.

So we now turn to the study of the behavior of the difference between the solution~$f$ and a target equilibrium~$ρ M_{κ_∞Ω_f(t)}$.
There are two tools we will use.
First, a simple decomposition in spherical harmonics will give us an estimation in~$H^s$ norm near the uniform distribution.
Then we will expand the free energy~$\mathcal F$ and its dissipation~$\mathcal D$ around the nonisotropic target equilibrium~$M_{κ_∞Ω_f(t)}$.
In case of stability, we will see that it gives us control on the displacement of~$Ω_f(t)$, allowing to get actual convergence to a given steady-state.
We split the stability analysis into two cases: stability about uniform equilibrium, and stability about anisotropic equilibrium.

\subsection{Local analysis about the uniform equilibrium}
\label{subsec-uniform}

We first state the following proposition, about the instability of the uniform equilibrium distribution for~$ρ$ above the critical threshold~$ρ_c$.
\begin{proposition}
\label{prop-unstability-uniform}
Let~$f$ be a solution of~\eqref{KFP-homogeneous}, with initial mass~$ρ$.
If~$ρ>ρ_c$, and if~$J_{f_0}≠0$, then we cannot have~$κ_∞=0$ in Proposition~\ref{prop-lasalle-refined}.
\end{proposition}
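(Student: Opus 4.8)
The plan is to derive a closed evolution equation for the momentum $J_{f}(t)$ and to show that, once the solution comes close to the uniform distribution while $\rho>\rho_{c}$, the quantity $|J_{f}|$ must grow exponentially. Since $\kappa_{\infty}=0$ in Proposition~\ref{prop-lasalle-refined} would force $|J_{f}(t)|\to 0$, this yields a contradiction.

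First I would compute $\frac{\d}{\d t}J_{f}=\int_{\mathbb{S}}\omega\,Q(f)\,\d\omega$, using that $f\in C^{1}(\mathbb{R}_{+}^{*},C^{\infty}(\mathbb{S}))$ (Theorem~\ref{theorem-existence-uniqueness}) to differentiate under the integral. For the alignment part of $Q$ in~\eqref{def-Q} I would use the identity~\eqref{eq-divA} applied to the tangent field $P_{\omega^{\perp}}\Omega_{f}\,f$, together with $P_{\omega^{\perp}}\Omega_{f}=\Omega_{f}-(\omega\cdot\Omega_{f})\omega$ and $\int_{\mathbb{S}}f\,\d\omega=\rho$; for the diffusion part I would use the self-adjointness of $\Delta_{\omega}$ and $\Delta_{\omega}\omega=-(n-1)\omega$. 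This produces
\[
\frac{\d}{\d t}J_{f}=\frac{\nu(|J_{f}|)}{|J_{f}|}\bigl(\rho\,\mathrm{I}_{n}-\mathbb{T}_{f}\bigr)J_{f}-(n-1)\,\tau(|J_{f}|)\,J_{f},\qquad \mathbb{T}_{f}:=\int_{\mathbb{S}}\omega\otimes\omega\,f\,\d\omega .
\]
The crucial structural remark is that this is a \emph{linear} ODE $\dot J_{f}=M(t)J_{f}$ whose matrix $M(t)$ is continuous in $t$ with no singularity at $J_{f}=0$, because $|J_{f}|$ stays in $[0,\rho]$, the maps $|J|\mapsto\nu(|J|)/|J|$ and $|J|\mapsto\tau(|J|)$ are Lipschitz (Hypothesis~\ref{hyp-prop-chaos}), and $f\in C([0,\infty),H^{s})$ makes $\mathbb{T}_{f}$ continuous. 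A Gronwall estimate on $|J_{f}|^{2}$ then gives $|J_{f}(t)|\geq|J_{f_{0}}|\exp\bigl(-\int_{0}^{t}\|M(s)\|\,\d s\bigr)>0$ for all $t$; hence $\Omega_{f}=J_{f}/|J_{f}|$ is well defined for all time, and projecting the ODE onto $\Omega_{f}$,
\[
\frac{\d}{\d t}|J_{f}|=\nu(|J_{f}|)\Bigl(\rho-\int_{\mathbb{S}}(\omega\cdot\Omega_{f})^{2}f\,\d\omega\Bigr)-(n-1)\,\tau(|J_{f}|)\,|J_{f}| .
\]

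Now I would argue by contradiction: assume $\kappa_{\infty}=0$. Since $M_{0\cdot\Omega}\equiv 1$ on $\mathbb{S}$, relation~\eqref{eq-limf} gives $f(t)\to\rho$ in every $H^{s}$, in particular in $L^{1}$, so $|J_{f}(t)|\to 0$ and, because $\int_{\mathbb{S}}(\omega\cdot\Omega_{f})^{2}\,\d\omega=\tfrac1n$, also $\int_{\mathbb{S}}(\omega\cdot\Omega_{f})^{2}f\,\d\omega\to\tfrac{\rho}{n}$. One may assume $\nu_{1}>0$ (otherwise $\rho_{c}=+\infty$ and there is nothing to prove), so $\rho_{c}=\tfrac{n\tau_{0}}{\nu_{1}}$. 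Inserting the expansions $\nu(|J|)=\nu_{1}|J|+O(|J|^{2})$ and $\tau(|J|)=\tau_{0}+O(|J|)$ (Hypothesis~\ref{hyp-prop-chaos}) into the scalar equation and letting $t\to\infty$, its right-hand side becomes $|J_{f}|\bigl[(n-1)\tfrac{\nu_{1}}{n}(\rho-\rho_{c})+o(1)\bigr]$; since $\rho>\rho_{c}$ this is $\geq\delta|J_{f}|$ for some $\delta>0$ and all $t\geq T$. Then $|J_{f}(t)|\geq|J_{f}(T)|e^{\delta(t-T)}\to+\infty$, contradicting $|J_{f}(t)|\to0$. Hence $\kappa_{\infty}\neq0$.

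The only slightly delicate point I anticipate is the bookkeeping that turns the scalar equation for $|J_{f}|$ into the clean bound $\frac{\d}{\d t}|J_{f}|\geq\delta|J_{f}|$: one must control the error $\int_{\mathbb{S}}(\omega\cdot\Omega_{f})^{2}f\,\d\omega-\tfrac{\rho}{n}$ and the $O(|J_{f}|^{2})$ remainder at the same time, ordering the time thresholds correctly, and one must know that $|J_{f}|$ never vanishes — which is exactly what the linear structure $\dot J_{f}=M(t)J_{f}$ guarantees. Everything else reduces to the identities already recorded in Section~\ref{sec-kinetic}.
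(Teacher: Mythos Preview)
Your proof is correct and follows essentially the same strategy as the paper: derive the linear ODE $\dot J_f=M(t)J_f$ for the momentum, use the linear structure to rule out vanishing of $J_f$, then show that under the hypothesis $f\to\rho$ the leading-order term forces exponential growth of $|J_f|$, a contradiction. The only cosmetic differences are that the paper packages the error via the spherical-harmonics remainder $g_2$ (writing the perturbation as $\int_{\mathbb S}P_{\omega^\perp}\Omega_f\,g_2\,\d\omega$) rather than via the second-moment tensor $\mathbb T_f$, works with $\tfrac12\frac{\d}{\d t}|J_f|^2$ instead of $\frac{\d}{\d t}|J_f|$, and invokes backward uniqueness for linear ODEs instead of Gronwall to conclude $J_f(t)\neq0$; these are equivalent and your route is arguably slightly more direct for this particular proposition.
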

This proposition tells that the uniform equilibrium is unstable, in the sense that no solution of initial mass~$ρ$ and with a nonzero initial first moment~$J_{f_0}$ can converge to the uniform distribution.

\begin{proof}
We first derive an estimation for the differential equation satisfied by~$J_f$ which will also be useful for the next proposition.

We expand~$f$ under the form~$f=ρ+n\, ω· J_f +g_2$~($g_2$ consists only in spherical harmonics modes of degree~$2$ and more), and we get~$∫_𝕊g_2\,\dω=0$ and~$∫_𝕊g_2\,ω\dω=0$. Let us first expand the alignment term~$∇_ω·(P_{ω^⊥}Ω_f f)$ of the operator~$Q$ defined in~\eqref{def-Q}, using the fact that~${∇_ω·(P_{ω^⊥}Ω_f)=Δ_ω(Ω_f·ω)=-(n-1)\,Ω_f·ω}$. We get
\begin{equation}
\label{expansion-alignment}
∇_ω·(P_{ω^⊥}Ω_ff)=-ρ(n-1)\,Ω_f·ω - n^2 |J_f| \big[(Ω_f·ω)^2-\tfrac1n] + ∇_ω·(P_{ω^⊥}Ω_fg_2),
\end{equation}
and we remark that the term in brackets is a spherical harmonic of degree~$2$, associated to the eigenvalue~$2n$ of~$-Δ_ω$.
Multiplying~\eqref{KFP-homogeneous} by~$ω$ and integrating on the sphere, we obtain, using~\eqref{expansion-alignment},~\eqref{eq-divA} and~\eqref{eq-omega-omega} (and observing that the terms~$∫_𝕊ω\,\d ω$ and~$∫_𝕊(ω·Ω_f)^2\,ω\,\d ω$ are both zero):
\begin{align}
\label{ODE-J}
\frac{\d}{\d t}J_f&=\frac{n-1}{n}\,ρ\,ν(|J_f|)\,Ω_f+ν(|J_f|)∫_𝕊P_{ω^⊥}Ω_f\, f\,\d ω-(n-1)τ(|J_f|)\,J_f\\
\nonumber&=-(n-1)τ(|J_f|)\bigl[1-\frac{ρ\,k(|J_f|)}{n|J_f|}\bigr]J_f+ν(|J_f|)∫_𝕊P_{ω^⊥}Ω_f g_2\,\d ω.
\end{align}
Using~\eqref{def-rho-c} and hypothesis~\ref{hyp-prop-chaos}, we can write:
\begin{equation}
\label{ODE-J-reduced}
\frac{\d}{\d t}J_f=-(n-1)τ_0\bigl(1-\frac{ρ}{ρ_c}\bigr)J_f +R(|J_f|)J_f +\frac{ν(|J_f|)}{|J_f|}\Big(∫_𝕊P_{ω^⊥} g_2\,\d ω\Big)J_f,
\end{equation}
with the remainder estimation, with an appropriate constant~$C>0$.
\begin{equation}
\label{estimate-R}
R(|J|) ⩽ C|J|.
\end{equation}
Equation~\eqref{ODE-J-reduced} can be seen as~$\frac{\d}{\d t}J_f=M(t)J_f$, the matrix~$M$ being a continuous function in time.
Therefore we have uniqueness of a solution of such an equation (even backwards in time), and if~$J_{f_0}≠0$, then we cannot have~$J_{f(t)}=0$ for~$t>0$.
Now if we suppose that~$∥f-ρ∥_{H^s}→0$, then we have~$|J_f|→0$ and~$∫_𝕊P_{ω^⊥} g_2\,\d ω→0$ (as a matrix).
So, for any~$ε>0$, and for~$t$ sufficiently large, taking the dot product of~\eqref{ODE-J-reduced} with~$J_f$, we get that
\[\frac12\frac{\d}{\d t}|J_f|^2⩾\big[(n-1)τ_0\big(\frac{ρ}{ρ_c}-1\big)-ε\big]|J_f|^2,\]
which, for~$ε$ sufficiently small, leads to an exponential growth of~$|J_f|$, and this is a contradiction.
\end{proof}

We now turn to the study of the stability of the uniform distribution when~$ρ$ is below the critical threshold~$ρ_c$.
We have the
\begin{proposition}
\label{prop-stability-uniform}
Suppose that~$ρ<ρ_c$.
We define
\[λ=(n-1)τ_0(1-\frac{ρ}{ρ_c})>0.\]
Let~$f_0$ be an initial condition with mass~$ρ$, and~$f$ the corresponding solution of~\eqref{KFP-homogeneous}.
There exists~$δ>0$ independent of~$f_0$ such that if~$∥f_0-ρ∥_{H^s}<δ$, then for all~$t⩾0$
\[∥f(t)-ρ∥_{H^s}⩽\frac{∥f_0-ρ∥_{H^s}}{1-\frac{1}{δ}∥f_0-ρ∥_{H^s}}e^{-λ t}.\]
\end{proposition}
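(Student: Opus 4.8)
The plan is to reduce everything to a scalar Riccati-type differential inequality for $X(t):=\|f(t)-\rho\|_{H^s}$. First observe that the right-hand side of the claimed estimate is exactly the solution of the scalar ODE $\dot u=-\lambda u+\frac{\lambda}{\delta}u^2$ with $u(0)=X(0)$, so it suffices to establish a differential inequality $\frac{\d}{\d t}X\le-\lambda X+CX^2$, valid as long as $X$ stays below some threshold $\delta_0$ coming from the estimates, with a constant $C$ independent of $f_0$; then one sets $\delta:=\min(\delta_0,\lambda/C)$. Indeed, for $X<\delta\le\lambda/C$ the right-hand side $X(CX-\lambda)$ is $\le0$, so $X$ is non-increasing wherever $X<\delta$, and a standard continuation argument then shows $X(t)<\delta$ for all $t\ge0$; the inequality therefore holds globally and comparison with the Riccati ODE gives $X(t)\le\frac{X(0)}{1-(C/\lambda)X(0)}e^{-\lambda t}\le\frac{X(0)}{1-X(0)/\delta}e^{-\lambda t}$, which is the assertion. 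The case $X(0)=0$ (that is, $f_0=\rho$) is trivial.

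To obtain the differential inequality I would decompose $f-\rho=n\,\omega\cdot J_f+g_2$, where $g_2$ collects the spherical-harmonic modes of degree $\ge2$; by orthogonality in $H^s$, $X^2=n^{s+1}|J_f|^2+\|g_2\|_{H^s}^2$, and $\frac12\frac{\d}{\d t}X^2=n^{s+1}J_f\cdot\frac{\d}{\d t}J_f+\langle g_2,\partial_tg_2\rangle_{H^s}$. For the first term one uses~\eqref{ODE-J}--\eqref{ODE-J-reduced}: by Hypothesis~\ref{hyp-prop-chaos} and the definition~\eqref{def-rho-c} of $\rho_c$, the linear coefficient $(n-1)\tau(|J_f|)\bigl(1-\frac{\rho\,k(|J_f|)}{n|J_f|}\bigr)$ equals $\lambda+O(|J_f|)$, while boundedness of $|J|\mapsto\nu(|J|)/|J|$ makes the coupling term $\nu(|J_f|)\int_{\mathbb S}P_{\omega^\perp}\Omega_f g_2\,\d\omega$ of size $\le C|J_f|\|g_2\|_{H^s}$; since $n^{s+1}|J_f|^2\le X^2$, $|J_f|\le X$ and $\|g_2\|_{H^s}\le X$, this gives $n^{s+1}J_f\cdot\frac{\d}{\d t}J_f\le-\lambda\,n^{s+1}|J_f|^2+CX^3$. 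For the second term, $\partial_tg_2$ is the projection of $Q(f)$ onto degrees $\ge2$: the diffusion part yields $\tau(|J_f|)\langle g_2,\Delta_\omega g_2\rangle_{H^s}\le-2n\,\tau(|J_f|)\|g_2\|_{H^s}^2$ (the Laplace--Beltrami eigenvalue $\ell(\ell+n-2)$ being $\ge2n$ for $\ell\ge2$), and since $2n\tau_0>\lambda$ (indeed $2n\tau_0-\lambda=(n+1)\tau_0+(n-1)\tau_0\,\rho/\rho_c>0$), this is $\le-\lambda\|g_2\|_{H^s}^2$ for $|J_f|$ small; the alignment part, expanded via~\eqref{expansion-alignment}, produces a term of size $\nu(|J_f|)|J_f|\le C|J_f|^2$ together with $-\nu(|J_f|)\langle g_2,\nabla_\omega\cdot(P_{\omega^\perp}\Omega_f g_2)\rangle_{H^s}$, whose $H^s$ pairing is $\le C\|g_2\|_{H^s}^2$ uniformly in $\Omega_f$ after an integration by parts (the transport piece $P_{\omega^\perp}\Omega_f\cdot\nabla_\omega g_2$ being skew-adjoint up to a bounded zeroth-order operator, with coefficients smooth on $\mathbb S$), hence $\le C|J_f|\|g_2\|_{H^s}^2\le CX^3$ because $\nu(|J_f|)\le C|J_f|$. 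Summing, $\langle g_2,\partial_tg_2\rangle_{H^s}\le-\lambda\|g_2\|_{H^s}^2+CX^3$, and adding the two contributions yields $\frac12\frac{\d}{\d t}X^2\le-\lambda X^2+CX^3$, i.e.\ $\frac{\d}{\d t}X\le-\lambda X+CX^2$ on $\{X>0\}$.

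These manipulations are justified by Theorem~\ref{theorem-existence-uniqueness}: $f\in C([0,\infty);H^s)$ and $f(t)\in C^\infty(\mathbb S)$ with $f$ of class $C^1$ in time for $t>0$, so $t\mapsto\|f(t)-\rho\|_{H^s}^2$ is differentiable on $(0,\infty)$, and the estimate obtained there extends to $t=0$ by continuity; moreover conservation of mass gives $|J_f(t)|\in[0,\rho]$, so $\tau$ is bounded from below and $\nu(|J|)/|J|$, $\tau$ are bounded, as used above. Combining with the scalar argument of the first paragraph completes the proof.

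The main obstacle is to get the inequality with the \emph{exact} rate $\lambda$ and a constant $C$ \emph{independent of $f_0$}. Exactness of the rate rests on $\nu(0)=0$: near the isotropic equilibrium the whole alignment operator is a genuine perturbation, producing only $O(|J_f|)$ corrections to the linearized rate --- here the Lipschitz regularity in Hypothesis~\ref{hyp-prop-chaos} is essential --- while the spectral gap $2n\tau_0>\lambda$ on the degree-$\ge2$ modes lets the $g_2$-contribution be absorbed without degrading $\lambda$. The one genuinely derivative-losing term, $\nu(|J_f|)\nabla_\omega\cdot(P_{\omega^\perp}\Omega_f g_2)$, is the most technical point of the $H^s$ estimate and is handled through the skew-symmetry/commutator structure of transport on the closed manifold $\mathbb S$ (this is where the assumption $s>\frac{n-1}{2}$, ensuring good multiplication properties of $H^s(\mathbb S)$, enters). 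Finally, the uniform constant $C$ --- the improvement over~\cite{frouvelle2012dynamics}, where only $e^{-\beta t}$ for each $\beta<\lambda$, with an $f_0$-dependent prefactor, was obtained --- is precisely what the Riccati form $\frac{\d}{\d t}X\le-\lambda X+CX^2$ delivers upon integration.
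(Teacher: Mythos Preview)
Your approach is essentially the same as the paper's: decompose $f-\rho$ into its degree-$1$ mode $n\,\omega\cdot J_f$ and the remainder $g_2$, control the $J_f$-part via~\eqref{ODE-J-reduced} and the $g_2$-part via the spectral gap $2n$ of $-\Delta_\omega$ together with a commutator estimate for the transport term, then sum to get the Riccati inequality $y'\le-\lambda y+Cy^2$ with $y=\|f-\rho\|_{H^s}$ and integrate. Two small slips to fix: (i) the first eigenvalue of $-\Delta_\omega$ on degree-$1$ harmonics is $n-1$, not $n$, so the orthogonal decomposition reads $X^2=n(n-1)^s|J_f|^2+\|g_2\|_{H^s}^2$ rather than $n^{s+1}|J_f|^2+\|g_2\|_{H^s}^2$; (ii) the commutator estimate $|\langle g_2,\Omega_f\cdot\nabla_\omega(-\Delta_\omega)^s g_2\rangle|\le C\|g_2\|_{H^s}^2$ (which the paper takes from Lemma~2.1 of~\cite{frouvelle2012dynamics}) holds for all $s$ and does not rely on the algebra property of $H^s$, so no restriction $s>\frac{n-1}{2}$ is needed here --- that hypothesis enters only in the anisotropic case (Theorem~\ref{thm-strong-stability-anisotropic}), where a Sobolev embedding into $L^\infty$ is used.
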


\begin{proof}
We multiply~\eqref{KFP-homogeneous} by~$(-Δ_ω)^s g_2$ and integrate by parts on the sphere. Using~\eqref{expansion-alignment},~\eqref{eq-divA}, and the fact that~$g_2$ is orthogonal to the spherical harmonics of degree~$1$, we get
\begin{align*}
\frac12\frac{\d}{\d t}∥g_2∥^2_{H^s}=&-ν(|J_f|)\,n^2|J_f|∫_𝕊 \big[(Ω_f·ω)^2-\tfrac1n\big] (-Δ_ω)^s g_2\,\d ω \\
&+ν(|J_f|)∫_𝕊[Ω_f·∇_ω(-Δ_ω)^s g_2] g_2\,\d ω\\
&-τ(|J_f|)∫_𝕊g_2(-Δ_ω)^{s+1} g_2\,\d ω.
\end{align*}
Using the fact that the second eigenvalue of~$-Δ_ω$ is~$2n$, we get
\begin{equation}
\label{est-g2-Hs}
\begin{split}
\frac12\frac{\d}{\d t}∥g_2∥^2_{H^s}=-τ(|J_f|)∥g_2∥^2_{H^{s+1}}&-n^2ν(|J_f|)|J_f|∫_𝕊(2n)^s (Ω_f·ω)^2 g_2 \d ω\\
&+ν(|J_f|)∫_𝕊[Ω_f·∇_ω(-Δ_ω)^s g_2] g_2\,\d ω.
\end{split}
\end{equation}
We can directly compute the~$H^s$ norm of the first mode of~$f-ρ$ as in~\eqref{est-1stmoment}, and we get by orthogonal decomposition that
\begin{equation}
\label{norm-f-rho}
∥f-ρ∥^2_{H^s}=n(n-1)^s|J_f|^2+∥g_2∥^2_{H^s}.
\end{equation}
Taking the dot product of~\eqref{ODE-J-reduced} with~$n(n-1)^s J_f$ and summing with~\eqref{est-g2-Hs}, we get the time derivative of~$∥f-ρ∥^2_{H^s}$:
\begin{equation}
\label{eq-deriv-f-rho}
\begin{split}
\frac12\frac{\d}{\d t}∥f-ρ∥^2_{H^s}= {} &-n(n-1)^{s+1}τ_0\bigl(1-\frac{ρ}{ρ_c}\bigr)|J_f|^2 -
τ(|J_f|)∥g_2∥^2_{H^{s+1}} \\
&+ n(n-1)^s R(|J_f|)|J_f|^2 +ν(|J_f|)∫_𝕊g_2 \,Ω_f·∇(-Δ_ω)^s g_2 \,\d ω\\
&+ [n(n-1)^s-n^2(2n)^s]ν(|J_f|)|J_f|∫_𝕊Ω_f· P_{ω^⊥}Ω_f g_2 \d ω.
\end{split}
\end{equation}
Using the Poincaré inequality, and again, that the second eigenvalue of~$-Δ_ω$ is~$2n$, we get that
\begin{equation}
\label{poincare-g2}
∥g_2∥^2_{H^{s+1}}⩾2n ∥g_2∥^2_{H^s}⩾(n-1)(1-\frac{ρ}{ρ_c})∥g_2∥^2_{H^s}.
\end{equation}
We combine the first two terms of the right-hand side of~\eqref{eq-deriv-f-rho} with~\eqref{poincare-g2} to get an estimation of~$\frac12\frac{\d}{\d t}∥f-ρ∥^2_{H^s}$ in terms of a constant times~$∥f-ρ∥^2_{H^s}$ and a remainder that we expect to be of smaller order:
\begin{equation}
\label{est-Hs2}
\frac12\frac{\d}{\d t}∥f-ρ∥^2_{H^s}⩽-(n-1)τ_0\bigl(1-\frac{ρ}{ρ_c}\bigr)∥f-ρ∥^2_{H^s}+\mathcal R_s,
\end{equation}
where
\begin{equation}
\label{def-Rs}
\begin{split}
\mathcal R_s= {} &n(n-1)^s R(|J_f|)|J_f|^2 +ν(|J_f|)∫_𝕊g_2 \,Ω_f·∇(-Δ_ω)^s g_2 \,\d ω\\
&+ [n(n-1)^s-(2n)^s]ν(|J_f|)|J_f|∫_𝕊(Ω_f·ω)^2 g_2 \d ω\\
&+ [τ_0-τ(|J_f|)](n-1)(1-\frac{ρ}{ρ_c})∥g_2∥^2_{H^s}.
\end{split}
\end{equation}
Using Lemma~2.1 of~\cite{frouvelle2012dynamics}, there exists a constant~$C_1$ (independent of~$g_2$) such that
\[\big|∫_𝕊g_2 \,Ω_f·∇(-Δ_ω)^s g_2 \,\d ω\big|⩽ C_1∥g_2∥^2_{H^s}.\]
Together with the estimates~$R$,~$ν$ and~$τ$ given by~\eqref{estimate-R} and hypothesis~\eqref{hyp-prop-chaos}, and the fact that the function~$ω↦(Ω_f·ω)^2$ belongs to~$H^{-s}$, we can estimate every term of~\eqref{def-Rs}, giving existence of constants~$C_2, C_3$, such that
\[\mathcal R_s⩽ C_2 \big[|J_f|^3+|J_f|^2∥g_2∥_{H^s} +|J_f|∥g_2∥^2_{H^s}\big]⩽C_3 ∥f-ρ∥^3_{H^s},\]
the last inequality coming from equation~\eqref{norm-f-rho}.
Solving the differential inequality~$y'⩽-λy+C_3y^2$ which corresponds to~\eqref{est-Hs2} with~$y=∥f-ρ∥_{H^s}$, we get that
\[\frac{y}{λ-C_3y}⩽\frac{y_0}{λ-C_3y_0}e^{-λt},\]
provided that~$y<δ=\frac{λ}{C_3}$. If~$y_0<δ$, the differential inequality ensures that~$y$ is decreasing and the condition~$y<δ$ is always satisfied. In this case, we get
\[y⩽\frac{y}{1-\frac{y}{δ}}⩽\frac{y_0}{1-\frac{y_0}{δ}}e^{-λt},\] 
which ends the proof.
\end{proof}

\begin{remark}
\label{remark-special-entropy}
We can indeed remove this condition of closeness of~$f_0$ to~$ρ$ by using the method of~\cite{frouvelle2012dynamics} in the case where~$ρ<\hat{ρ}$, where the critical threshold~$\hat{ρ}$ is defined as follows:~$\hat{ρ}=\inf_{|J|} \frac{n|J|}{k(|J|)}$ (since we have~$c(κ)⩽\frac{κ}{n}$ for all~$κ$, compared to the definition~\eqref{def-rho-c}-\eqref{def-rho-star} of~$ρ_c$ and~$ρ_*$, we see that~$\hat{ρ}⩽ρ_*⩽ρ_c$, with a possible equality if for example~$|J|↦\frac{k(|J|)}{|J|}$ is nonincreasing).

We can use the special cancellation presented in~\cite{frouvelle2012dynamics}:
\[∫∇ g\,\widetilde{Δ_{n-1}}g=0,\]
where~$\widetilde{Δ_{n-1}}$ is the so-called conformal Laplacian on~$𝕊$, a linear operator defined, for any spherical harmonic~$Y_ℓ$ of degree~$ℓ$, by
\[\widetilde{Δ}_{n-1}\, Y_ℓ = ℓ(ℓ+1)…(ℓ+n-2)Y_ℓ.\]
Multiplying~\eqref{KFP-homogeneous} by~$\widetilde{Δ}_{n-1}^{-1}(f-ρ)$ and integrating by parts, we get the following conservation relation:
\begin{equation}
\label{new-entropy}
\tfrac12\tfrac{\d}{\d t} \big(\tfrac{n}{(n-1)!}|J_f|^2+∥g_2∥^2_{\widetilde{H}^{-\frac{n-1}2}}\big)=
-τ(|J_f|)\big[\tfrac{n}{(n-2)!}\big(1-\tfrac{ρ k(|J_f|)}{n|J_f|}\big)\,|J_f|^2
+∥g_2∥^2_{\widetilde{H}^{-\frac{n-3}2}}\big],
\end{equation}
where the norms~$∥·∥_{\widetilde{H}^{-\frac{n-1}2}}$ and~$∥·∥_{\widetilde{H}^{-\frac{n-3}2}}$ are modified Sobolev norms respectively equivalent to~$∥·∥_{{H}^{-\frac{n-1}2}}$ and~$∥·∥_{{H}^{-\frac{n-3}2}}$.

So if~$ρ<\hat{ρ}$, equation~\eqref{new-entropy} can be viewed as a new entropy dissipation for the system, and we have global exponential convergence with rate~$\hat{λ}=(n-1)τ_{min}(1-\frac{ρ}{\hat{ρ}})$, where~$τ_{min}=\min_{|J|⩽ρ}τ(|J|)$:
\begin{equation}
\label{eq-global-decay}
∥f-ρ∥_{\widetilde{H}^{-\frac{n-1}2}}⩽∥f_0-ρ∥_{\widetilde{H}^{-\frac{n-1}2}}e^{-\hat{λ}t},
\end{equation}
valid for any initial condition~$f_0∈ H^{-\frac{n-1}2}(𝕊)$ with initial mass~$ρ$, whatever its distance to~$ρ$.

Let us also remark that if~$\hat{ρ}⩽ρ<ρ_*$, where~$ρ_*$ is defined in~\eqref{def-rho-star}, any solution with initial mass~$ρ$ converges to the uniform distribution (the unique equilibrium), but we do not have an a priori global rate.
We can just locally rely on Proposition~\ref{prop-stability-uniform}.
\end{remark}

\subsection{Local analysis about the anisotropic equilibria}
\label{subsec-anisotropic}

\label{subsec-local-analysis-anisotropic}

We fix~$κ>0$ and let~$ρ$ be such that~$κ$ is a solution of the compatibility equation~\eqref{eq-compatibility}, i.e.~$ρ=\frac{j(κ)}{c(κ)}$.
In this subsection, to make notations simpler, we will not write the dependence on~$κ$ when not necessary.

We make an additional hypothesis on the function~$k$:
\begin{hypothesis}
\label{hyp-differentiable}
The function~$|J|↦ k(|J|)$ is differentiable, with a derivative~$k'$ which is itself Lipschitz.
\end{hypothesis}

We can then state a first result about the stability or instability of a non-isotropic solution~$ρ M_{κΩ}$, depending on the sign of~$(\frac{j}{c})'$.
In summary, if the function~$κ↦\frac{j}{c}$ is (non-degenerately) increasing then the corresponding equilibria are stable, while if it is (non-degenerately) decreasing the equilibria are unstable.
For example, for the different cases depicted in Figure~\ref{fig-shapes}, it is then straightforward to determine the stability of the different equilibria.

\begin{proposition}
\label{prop-unstability-stability-anisotropic}
Let~$κ>0$ and~$ρ=\frac{j(κ)}{c(κ)}$.
We denote by~$\mathcal F_κ$ the value of~$\mathcal F(ρ M_{κΩ})$ (independent of~$Ω∈𝕊$).

\begin{itemize}
\item[(i)] Suppose~$(\frac{j}{c})'(κ)<0$.
Then any equilibrium of the form~$ρ M_{κΩ}$ is unstable, in the following sense:
in any neighborhood of~$ρ M_{κΩ}$, there exists an initial condition~$f_0$ such that~$\mathcal F(f_0)<\mathcal F_κ$.
Consequently, in that case, we cannot have~$κ_∞=κ$ in Proposition~\ref{prop-lasalle-refined}.
\item[(ii)] Suppose~$(\frac{j}{c})'(κ)>0$.
Then the family of equilibria~$\{ρ M_{κΩ},Ω∈𝕊\}$ is stable, in the following sense: for all~$K>0$ and~$s>\frac{n-1}2$, there exists~$δ>0$ and~$C$ such that for all~$f_0$ with mass~$ρ$ and with~$∥f_0∥_{H^s}⩽ K$, if~$∥f_0-ρ M_{κΩ}∥_{L^2}⩽δ$ for some~$Ω∈𝕊$, then for all~$t⩾0$, we have
\begin{gather*}
\mathcal F(f)⩾\mathcal F_κ,\\
∥f-ρ M_{κΩ_f}∥_{L^2}⩽ C∥f_0-ρ M_{κΩ_{f_0}}∥_{L^2}.
\end{gather*}
\end{itemize}
\end{proposition}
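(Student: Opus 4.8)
The backbone of both parts is the one–variable function $F(\kappa') := \mathcal{F}(\rho M_{\kappa'\Omega})$, which by \eqref{def-free-energy}, \eqref{eq-vonMises} and \eqref{eq-c-kappa} is independent of $\Omega$ and equals $\rho\ln\rho + \rho\kappa' c(\kappa') - \rho\ln\!\big(\int_{\mathbb{S}} e^{\kappa'\upsilon\cdot\Omega}\,\d\upsilon\big) - \Phi(\rho c(\kappa'))$; in particular $F(\kappa) = \mathcal{F}_\kappa$ and $r_0 := \rho c(\kappa) = j(\kappa)$. I would first differentiate it: using $\tfrac{\d}{\d\kappa'}\ln\!\int e^{\kappa'\upsilon\cdot\Omega}\,\d\upsilon = c(\kappa')$ and $\Phi' = k$ one gets $F'(\kappa') = \rho\,c'(\kappa')\,[\kappa' - k(\rho c(\kappa'))]$, which vanishes at $\kappa'=\kappa$ since $k(j(\kappa)) = \kappa$, and then, using $j'(\kappa) = 1/k'(j(\kappa))$ and $\rho = j(\kappa)/c(\kappa)$, a short computation gives
\[F''(\kappa) = \rho\,c'(\kappa)\,c(\kappa)\,k'(j(\kappa))\,\big(\tfrac{j}{c}\big)'(\kappa).\]
Since $\rho$, $c(\kappa)$, $c'(\kappa)$, $k'(j(\kappa))$ are all positive (the last one implicit in the very existence of $(\tfrac jc)'(\kappa)$ under Hypothesis~\ref{hyp-differentiable}), the sign of $F''(\kappa)$ is that of $(\tfrac jc)'(\kappa)$. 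This dichotomy drives everything.

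For point $(i)$: if $(\tfrac jc)'(\kappa)<0$ then $F$ has a strict local maximum at $\kappa$, so for $\kappa'\neq\kappa$ close to $\kappa$ the profile $f_0 = \rho M_{\kappa'\Omega}$ lies in any prescribed neighbourhood of $\rho M_{\kappa\Omega}$ and satisfies $\mathcal{F}(f_0) = F(\kappa') < \mathcal{F}_\kappa$ — exactly the instability claimed. For such an $f_0$ the statement on $\kappa_\infty$ is then immediate: by \eqref{dissipation-free-energy} $\mathcal{F}(f(t))$ is non-increasing, while Propositions~\ref{prop-lasalle} and \ref{prop-lasalle-refined} give $\mathcal{F}(f(t))\to\mathcal{F}_{\kappa_\infty}$, so $\mathcal{F}_{\kappa_\infty}\leq\mathcal{F}(f_0)<\mathcal{F}_\kappa$, ruling out $\kappa_\infty=\kappa$.

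For point $(ii)$ I would run a continuation argument built on the identity obtained by inserting and removing $\int f\ln(\rho M_{\tilde\kappa\,\Omega_f})$, where for $f$ of mass $\rho$ with $|J_f|>0$ the concentration $\tilde\kappa=\tilde\kappa(|J_f|)$ is defined by $\rho c(\tilde\kappa)=|J_f|$:
\[\mathcal{F}(f)-\mathcal{F}_\kappa=\int_{\mathbb{S}} f\,\ln\frac{f}{\rho M_{\tilde\kappa\,\Omega_f}}\,\d\omega\;+\;\big(F(\tilde\kappa(|J_f|))-F(\kappa)\big),\]
in which the first term is $\geq 0$ and, since $F''(\kappa)>0$, the second is $\geq\alpha(\tilde\kappa-\kappa)^2\geq 0$ whenever $|J_f|$ stays in a fixed neighbourhood of $r_0$. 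At $t=0$: the reference $\rho M_{\tilde\kappa\,\Omega_{f_0}}$ is bounded below, so $(1+u)\ln(1+u)-u\leq Cu^2$ ($u\geq-1$) gives $\int f_0\ln\frac{f_0}{\rho M_{\tilde\kappa\,\Omega_{f_0}}}\leq C\|f_0-\rho M_{\tilde\kappa\,\Omega_{f_0}}\|_{L^2}^2$; combined with \eqref{est-1stmomentnorm} (which bounds $||J_{f_0}|-r_0|$, hence $|\tilde\kappa-\kappa|$, by $\|f_0-\rho M_{\kappa\Omega_{f_0}}\|_{L^2}$) and the Lipschitz dependence of $\kappa'\mapsto\rho M_{\kappa'\Omega}$ and of $F$, this yields $0\leq\mathcal{F}(f_0)-\mathcal{F}_\kappa\leq C\|f_0-\rho M_{\kappa\Omega_{f_0}}\|_{L^2}^2$. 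Then let $T^\ast$ be the largest time on which $\|f(t)-\rho M_{\kappa\Omega_{f(t)}}\|_{L^2}\leq\varepsilon_0$; on $[0,T^\ast)$, Theorem~\ref{theorem-existence-uniqueness} (with $m=0$) gives $\|f(t)\|_{H^s}\leq C\|f_0\|_{H^s}\leq CK$, and interpolating this with the $L^2$-smallness, together with the embedding $H^{s'}\hookrightarrow L^\infty$ for $\tfrac{n-1}2<s'<s$, makes $f(t)$ both $L^\infty$-close to the von Mises--Fisher family and bounded below. Hence the relative entropy above becomes two-sidedly comparable to $\|f(t)-\rho M_{\tilde\kappa(|J_{f(t)}|)\,\Omega_{f(t)}}\|_{L^2}^2$, and altogether $\|f(t)-\rho M_{\kappa\Omega_{f(t)}}\|_{L^2}^2\leq C\big(\mathcal{F}(f(t))-\mathcal{F}_\kappa\big)\leq C\big(\mathcal{F}(f_0)-\mathcal{F}_\kappa\big)\leq C'\|f_0-\rho M_{\kappa\Omega_{f_0}}\|_{L^2}^2\leq C'\delta^2$. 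Choosing $\delta$ with $C'\delta^2<\varepsilon_0^2$ then forces $T^\ast=\infty$ by continuity, which gives the claimed $L^2$ bound with $C=\sqrt{C'}$ and, since $|J_{f(t)}|$ stays within $O(\delta)$ of $r_0$, keeps the second term of the identity nonnegative, so $\mathcal{F}(f(t))\geq\mathcal{F}_\kappa$ for all $t\geq0$.

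The main obstacle is the two-sided coercivity used on $[0,T^\ast)$: upgrading the one-sided Csisz\'{a}r--Kullback--Pinsker bound to a genuine equivalence $\mathcal{F}(f)-\mathcal{F}_\kappa\simeq\|f-\rho M_{\kappa\Omega_f}\|_{L^2}^2$ near the equilibrium family, uniformly in time. This is precisely where the $H^s$ bound on $f(t)$ is indispensable — through Sobolev embedding and interpolation it converts $L^2$-smallness into $L^\infty$-smallness, supplying the uniform upper and lower bounds on $f(t)$ that neutralise the logarithm — and where the continuation argument is needed so that these estimates hold for all $t$ and not just at $t=0$; the rest (the differentiations of $F$, the Lipschitz estimates on $\tilde\kappa$ and on the von Mises--Fisher map, the first-mode inequality \eqref{est-1stmomentnorm}) is routine.
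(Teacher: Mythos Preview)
Your argument is correct, and it follows a genuinely different (and rather elegant) route from the paper. The paper expands $f$ around the \emph{fixed} concentration $\kappa$ via $f=M_{\kappa\Omega_f}(\rho+\alpha(\cos\theta-c)+g_2)$, Taylor-expands $\mathcal F(f)-\mathcal F_\kappa$ to second order with an explicit cubic remainder (using $|(1+x)\ln(1+x)-x-\tfrac12x^2|\leq\tfrac12|x|^3$), obtains
\[
\mathcal F(f)-\mathcal F_\kappa=\tfrac1{2\rho}\Big[\tfrac{c'c}{j'}\big(\tfrac jc\big)'\alpha^2+\langle g_2^2\rangle_M\Big]+O(\langle|g_1|^3\rangle_M),
\]
and then appeals to a small abstract lemma comparing a continuous $x(t)$ with a decreasing $y(t)$ satisfying $|x-y|\leq Cx^{1+\varepsilon}$. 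For (i) the paper perturbs with $g_2=0$ (a \emph{linear} perturbation in $\cos\theta$), whereas you perturb within the von Mises--Fisher family itself. Your key structural idea for (ii) is the exact splitting
\[
\mathcal F(f)-\mathcal F_\kappa=\int f\ln\frac{f}{\rho M_{\tilde\kappa\,\Omega_f}}\,\d\omega\;+\;\big(F(\tilde\kappa)-F(\kappa)\big),\qquad \rho\,c(\tilde\kappa)=|J_f|,
\]
which separates the ``shape'' defect (a relative entropy, always $\geq0$) from the ``radial'' defect in $|J_f|$ (controlled by the scalar function $F$ whose second derivative you compute), with no remainder at all; the continuation argument then replaces the paper's abstract lemma. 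Both proofs ultimately hinge on the same mechanism to make the entropy two-sidedly equivalent to an $L^2$ distance: the uniform $H^s$ bound of Theorem~\ref{theorem-existence-uniqueness} interpolated against $L^2$ smallness to produce $L^\infty$ smallness. What your approach buys is conceptual cleanliness (the decomposition is exact and manifestly nonnegative term by term); what the paper's direct expansion buys is that the same coefficients $\alpha$, $g_2$ and the quadratic form reappear verbatim in the subsequent dissipation estimate of Proposition~\ref{prop-step-1}, so less has to be redone when proving Theorem~\ref{thm-strong-stability-anisotropic}.
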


\begin{proof}
We first make some preliminary computation which will also be useful for the following theorem.
We expand the solution~$f$ of~\eqref{KFP-homogeneous} (with initial mass~$ρ$) around a ``moving'' equilibrium~$ρ M_{κΩ_f(t)}$.
Let us use the same notations as in~\cite{frouvelle2012dynamics}: we write~$⟨ g⟩_M$ for~$∫_𝕊g(ω)M_{κΩ_f}\d ω$, we denote~$ω\cdotΩ_f$ by~$\cosθ$ and we write:
\[f=M_{κΩ_f}(ρ+g_1)=M_{κΩ_f}(ρ+α(\cosθ-c)+g_2),\]
where
\begin{equation}
\label{def-alpha}
α=\frac{|J_f|-ρ c}{⟨(\cosθ-c)^2⟩_M}.
\end{equation}
We have~$⟨ g_1⟩_M=⟨ g_2⟩_M=0$, and definition of~$α$ ensures that~$⟨ω\,g_2⟩_M=0$.
The derivative of~$c$ with respect to~$κ$ is given by
\begin{equation}
\label{eq-c-prime}
c'(κ)=⟨\cos^2θ⟩_M-⟨\cosθ⟩^2_M=⟨(\cosθ-c)^2⟩_M.
\end{equation}
We are now ready to estimate the difference between the free energy of~$f$ and of the equilibrium~$ρ M_{κΩ_f}$.
We have a first expansion, for the potential term of the free energy~\eqref{def-k-phi}:
\begin{align*}
Φ(|J_f|)&=Φ(ρ c)+k(ρ c)α⟨(\cosθ-c)^2⟩_M+k'(ρ c)\frac{α^2}2⟨(\cosθ-c)^2⟩^2_M+O(α^3)\\
&=Φ(j)+κ c'(κ)α+\frac{(c'(κ))^2}{j'(κ)}\frac{α^2}2 + O(α^3).
\end{align*}
Now, we will use the following estimation, valid for any~$x∈(-1,+∞)$:
\begin{equation}
\label{estimate-xlnx}
|(1+x)\ln(1+x) - x -\tfrac12x^2|⩽\tfrac12|x|^3.
\end{equation}
To get this estimation, we note that~$h_2(x)=(1+x)\ln(1+x) - x -\tfrac12x^2$ is such that~$h_2$,~$h_2'$ and~$h_2''$ cancel at~$x=0$, and that~$h_2^{(3)}(x)=\frac{-1}{(1+x)^2}∈(-1,0)$ for~$x>0$.
Therefore Taylor’s formula gives~$-\frac16x^3<h_2(x)<0$ for~$x>0$. 
For~$x<0$ we have by the same argument~$h_2(x)>0$, but Taylor’s formula is not sufficient to have a uniform estimate on~$(-1,0)$.
We introduce~$h_3=h_2+\frac12x^3$.
By induction from~$i=3$ to~$i=1$ we have that~$h^{(i)}_3$ as a unique root~$γ_i$ in~$(-1,0)$, with~$γ_3>γ_2>γ_1$.
Since~$h_3'(x)→-∞$ as~$x→-1$,~$h_3$ is decreasing on~$(-1,γ_1)$ and increasing on~$(γ_1,0)$, but we have~$h_3(-1)=h_3(0)=0$ so~$h_3<0$ on~$(-1,0)$, which ends the derivation of~\eqref{estimate-xlnx}.

Using~\eqref{estimate-xlnx} with~$x=\frac{g_1}{ρ}$, we have that
\begin{align*}
∫_𝕊f\ln f\d ω &=⟨(ρ+g_1)[\ln(1+\frac{g_1}{ρ})+\ln(ρ M_{κΩ_f})]⟩_M\\
&=⟨ρ\ln(ρ M_{κΩ_f})⟩_M + ⟨κ\cosθ g_1⟩_M + \frac1{2ρ}⟨ g_1^2⟩_M+ O(⟨|g_1|^3⟩_M)\\
&=∫_𝕊ρ M_{κΩ_f}\ln(ρ M_{κΩ_f})\d ω + ακ c' + \frac1{2ρ}[α^2c'+⟨ g_2^2⟩_M] + O(⟨|g_1|^3⟩_M).
\end{align*}
Finally we get
\begin{align}
\nonumber\mathcal F(f)-\mathcal F(ρ M_{κΩ_f})&= \frac{α^2}2\,c'\big(\frac1{ρ}-\frac{c'}{j'}\big) + \frac1{2ρ}⟨ g_2^2⟩_M + O(⟨|g_1|^3⟩_M)\\
\label{estimate-free-energy}&= \frac1{2ρ}[\frac{c' c}{j'}\big(\frac{j}{c}\big)' α^2 + ⟨ g_2^2⟩_M] + O(⟨|g_1|^3⟩_M).
\end{align}
Now, we prove~(i).
We simply take~$α$ sufficiently small and~$g_2=0$, and the estimation~\eqref{estimate-free-energy} gives the result.
Indeed, since~$c$ and~$j$ are increasing functions of~$κ$, the leading order coefficient in~\eqref{estimate-free-energy}, which is~$\frac1{2ρ}\frac{c' c}{j'}\big(\frac{j}{c}\big)'$, is negative by the assumption.

We now turn to point~(ii).
We will use the following simple lemma, the proof of which is left to the reader.
\begin{lemma}
\label{lemma-x-y}
Suppose~$x(t)⩾0$ is a continuous function and~$y(t)$ is a decreasing function satisfying
\[|x(t)-y(t)|⩽ C x(t)^{1+ε}, ∀ t⩾0,\]
for some positive constants~$C$ and~$ε$.
Then there exist~$δ>0$ and~$\widetilde C$ such that, if~$x(0)⩽δ$, then
\[y(t)⩾0, \quad \text{and} \quad |x(t)-y(t)|⩽\widetilde C y(t)^{1+ε}, ∀ t⩾0.\]
\end{lemma}

By Sobolev embedding, Sobolev interpolation, and the uniform bounds of Theorem~\ref{theorem-existence-uniqueness}, we have
\begin{equation}
\label{bound-infty-L2}
∥g_1∥_∞⩽C∥g_1∥_{H^{\frac{n-1}2}}⩽C ∥g_1∥^{1-ε}_{H^s}∥g_1∥^{ε}_{L^2}⩽ C_1(⟨ g_1^2⟩_{M})^ε,
\end{equation}
for some~$ε>0$, and where the constant~$C_1$ depends only on~$K$ (the constant in the statement of the proposition, which is an upper bound for~$∥f_0∥_{H^s}$),~$s$,~$κ$ and the coefficients~$ν$ and~$τ$ of the model. We will denote by~$C_i$ such a constant in the following of the proof.

We define~$x(t)=\frac1{2ρ}[\frac{cc'}{j'}(\frac{j}{c})'\,α^2 + ⟨ g_2^2⟩_M]$ and~$y(t)=\mathcal F(f)-\mathcal F_κ$.
Together with the estimate~\eqref{estimate-free-energy}, since~$⟨ g_1^2⟩_M=c'α^2+⟨ g_2^2⟩_M$, and~$(\frac{j}{c})'>0$, we can apply Lemma~\ref{lemma-x-y}.
It gives us that if~$⟨ g_1^2⟩_M$ is initially sufficiently small, then~$\mathcal F(f)⩾\mathcal F_κ$ and we have
\[x(t)=\frac1{2ρ}[\frac{cc'}{j'}(\frac{j}{c})'\,α^2 + ⟨ g_2^2⟩_M] = \mathcal F(f)-\mathcal F_κ + O((\mathcal F(f)-\mathcal F_κ)^{1+ε}).\]
Now, using the fact that~$x(t)$,~$⟨ g_1^2⟩_M$ and~$∥f-ρM_{κΩ_f}∥_{L^2}^2$ are equivalent quantities (up to a multiplicative constant) and the estimate~\eqref{estimate-free-energy}, we get that
\begin{equation}
\label{est-L2-L2}
∥f-ρM_{κΩ_f}∥_{L^2}^2⩽ C_2x(t)⩽ C_3(\mathcal F(f)-\mathcal F_κ).
\end{equation}
Using the fact that~$\mathcal F(f)-\mathcal F_κ$ is decreasing in time, and the same equivalent quantities, we finally get
\[∥f-ρM_{κΩ_f}∥_{L^2}^2⩽ C_3(\mathcal F(f_0)-\mathcal F_κ)⩽ C_4∥f_0-ρM_{κΩ_{f_0}}∥_{L^2}^2.\]
This completes the proof, with the simple remark that, as in the proof of proposition~\ref{prop-lasalle-refined}, we can control~$|Ω-Ω_{f_0}|$ by~$∥f_0-ρ M_{κΩ}∥_{L^2}$ (using the formula~\eqref{est-1stmoment}).
Then we can also control the quantities~$∥ρ(M_{κΩ}-M_{κΩ_{f_0}})∥_{L^2}$ and~$∥f_0-ρ M_{κΩ_{f_0}}∥_{L^2}$, and finally the initial value of~$⟨ g_1^2⟩_M$, by this quantity~$∥f_0-ρ M_{κΩ}∥_{L^2}$.
\end{proof}

We can now turn to the study of the rate of convergence to equilibria when it is stable (in the case~$(\frac{j}{c})'>0$).
The main result is the following theorem, which also gives a stronger stability result, in any Sobolev space~$H^s$ with~$s>\frac{n-1}2$.
Let us remark that this theorem is an improvement compared to the results of~\cite{frouvelle2012dynamics}, in the case where~$τ$ is constant and~$ν(|J|)$ is proportional to~$|J|$.
In what follows, we call constant a quantity which does not depend on the initial condition~$f_0$ (that is to say, it depends only on~$s$,~$κ$,~$n$ and the coefficients of the equation~$ν$ and~$τ$).

\begin{theorem}
\label{thm-strong-stability-anisotropic}
Suppose~$(\frac{j}{c})'(κ)>0$.
Then, for all~$s>\frac{n-1}2$, there exist constants~$δ>0$ and~$C>0$ such that for any~$f_0$ with mass~$ρ$ satisfying~$∥f_0-ρ M_{κΩ}∥_{H^s}<δ$ for some~$Ω∈𝕊$, there exists~$Ω_∞∈𝕊$ such that
\[∥f-ρ M_{κΩ_∞}∥_{H^s}⩽ C∥f_0-ρ M_{κΩ}∥_{H^s}e^{-λ t},\]
where the rate is given by
\begin{equation}
\label{def-lambda}
λ=\frac{cτ(j)}{j'}Λ_κ(\frac{j}{c})'.
\end{equation}
The constant~$Λ_κ$ is the best constant for the following weighted Poincaré inequality (see the appendix of~\cite{degond2012macroscopic} for more details on this constant, which does not depend on~$Ω$):
\begin{equation}
\label{poincare-lambda}
⟨|∇_ω g|^2⟩_{M}⩾Λ_κ⟨(g-⟨ g⟩_{M})^2⟩_{M}.
\end{equation}
\end{theorem}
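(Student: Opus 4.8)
The plan is to build on the $L^2$-stability already obtained in Proposition~\ref{prop-unstability-stability-anisotropic}(ii), and upgrade it to an exponential $H^s$-estimate. First I would write $f = M_{κΩ_f}(ρ + g_1) = M_{κΩ_f}(ρ + α(\cosθ - c) + g_2)$ as in that proof, and track two quantities simultaneously: the ``parallel'' deviation $α$ (equivalently $|J_f| - ρc$) and the ``orthogonal'' deviation $g_2$, together with the motion of $Ω_f(t)$. The key structural fact is that the free energy expansion~\eqref{estimate-free-energy} shows $\mathcal F(f) - \mathcal F_κ$ is, to leading order, a positive-definite quadratic form $\frac{1}{2ρ}[\frac{cc'}{j'}(\frac{j}{c})'α^2 + ⟨g_2^2⟩_M]$ in $(α, g_2)$ precisely because $(\frac{j}{c})' > 0$; meanwhile the dissipation $\mathcal D(f)$, expanded to leading order around the moving equilibrium, should be controlled below by the weighted Poincaré inequality~\eqref{poincare-lambda} applied to $g_1/ρ$, giving roughly $\mathcal D(f) \gtrsim τ(j)Λ_κ ⟨g_1^2⟩_M / ρ$ up to the $O(⟨|g_1|^3⟩)$ corrections controlled by~\eqref{bound-infty-L2}. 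Comparing the two, $\mathcal D(f) \gtrsim 2λ(\mathcal F(f)-\mathcal F_κ)$ with exactly the $λ$ in~\eqref{def-lambda}, so that $\frac{\d}{\d t}(\mathcal F - \mathcal F_κ) = -\mathcal D \leqslant -2λ(\mathcal F - \mathcal F_κ) + (\text{cubic remainder})$; a Gronwall/bootstrap argument (as in Lemma~\ref{lemma-x-y}) yields $\mathcal F(f(t)) - \mathcal F_κ \leqslant (\mathcal F(f_0) - \mathcal F_κ)e^{-2λt}$, hence $\|f(t) - ρM_{κΩ_f(t)}\|_{L^2}^2 \lesssim e^{-2λt}\|f_0 - ρM_{κΩ_{f_0}}\|_{L^2}^2$ via the equivalence of quantities from Proposition~\ref{prop-unstability-stability-anisotropic}.

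**From $L^2$ to $H^s$ and convergence of $Ω_f$.** Next I would promote this to the $H^s$ norm. The natural route is a differential inequality for $\|g_2\|_{H^s}^2$: multiply~\eqref{KFP-homogeneous} by $(-Δ_ω)^s$ applied to the relevant correction, integrate by parts, and isolate the leading dissipative term $-τ(|J_f|)\|g_2\|_{H^{s+1}}^2$, which by the Poincaré/spectral-gap inequality on $𝕊$ (the operator $-Δ_ω$ restricted to modes orthogonal to the tangent-to-the-von-Mises directions) dominates the remaining terms, all of which carry a factor of $|J_f| - ρc$ or $\|g_1\|_\infty$ and are therefore exponentially small by the $L^2$ estimate and the uniform $H^s$ bound of Theorem~\ref{theorem-existence-uniqueness}. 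This gives $\|g_2(t)\|_{H^s} \lesssim e^{-λ' t}$ for some $λ' > 0$; combined with the already-exponential control of $α$ (equivalently the first mode), one gets $\|f(t) - ρM_{κΩ_f(t)}\|_{H^s} \lesssim e^{-\min(λ,λ')t}\|f_0 - ρM_{κΩ}\|_{H^s}$ — and a careful tracking of constants should show $λ$ is the true rate. To obtain convergence of $Ω_f$ itself, I would derive from~\eqref{ODE-J} an ODE $\frac{\d}{\d t}Ω_f = (\text{bounded})\cdot(\text{term involving } g_2)$, showing $|\frac{\d}{\d t}Ω_f| \lesssim \|g_2\|_{L^2} \lesssim e^{-λ' t}$, which is integrable in $t$; hence $Ω_f(t) \to Ω_∞$ for some $Ω_∞ ∈ 𝕊$, with $|Ω_f(t) - Ω_∞| \lesssim e^{-λ' t}$. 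Then $\|f(t) - ρM_{κΩ_∞}\|_{H^s} \leqslant \|f(t) - ρM_{κΩ_f(t)}\|_{H^s} + ρ\|M_{κΩ_f(t)} - M_{κΩ_∞}\|_{H^s} \lesssim e^{-λ t}\|f_0 - ρM_{κΩ}\|_{H^s}$, using local Lipschitz dependence of $Ω ↦ M_{κΩ}$ in $H^s$.

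**Main obstacle.** The delicate point is getting the \emph{sharp} rate $λ$ in~\eqref{def-lambda} rather than merely ``$λ - ε$ for all $ε$'' (which is the weakness of~\cite{frouvelle2012dynamics} that this theorem is meant to remove), and simultaneously obtaining the \emph{uniform} constant $C$ independent of $f_0$. Both improvements hinge on closing the estimates directly on the Lyapunov functional $\mathcal F - \mathcal F_κ$ — whose Hessian and whose dissipation are matched \emph{exactly} by the weighted Poincaré constant $Λ_κ$ — instead of linearizing and estimating eigenvalues perturbatively. The cubic remainders $O(⟨|g_1|^3⟩_M)$ and the cross-terms coupling $α$-dynamics to $g_2$-dynamics must be absorbed without degrading the coefficient; this is where the Sobolev interpolation~\eqref{bound-infty-L2}, the uniform-in-time regularity from Theorem~\ref{theorem-existence-uniqueness} (to keep $\|f\|_{H^s}$ bounded, hence the remainders genuinely of higher order), and the monotone-decay of $\mathcal F$ all have to be orchestrated carefully. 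The ODE analysis for $Ω_f$ producing an \emph{integrable} (not just bounded) drift is the other place where the exponential decay of $g_2$ is essential — a polynomial decay there would fail to pin down a limit direction.
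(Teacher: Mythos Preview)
Your first two steps---the free-energy/dissipation Grönwall yielding $\mathcal F(f)-\mathcal F_\kappa\lesssim e^{-2\lambda t}$ and hence $\|f-\rho M_{\kappa\Omega_f}\|_{L^2}\lesssim e^{-\lambda t}$, followed by the ODE for $\Omega_f$ giving $|\dot\Omega_f|\lesssim\sqrt{\langle g_2^2\rangle_M}\lesssim e^{-\lambda t}$ and hence $\Omega_f\to\Omega_\infty$---match the paper's Propositions~\ref{prop-step-1} and~\ref{prop-step-2} essentially exactly, including the identification of the sharp constant $\lambda$ from the comparison of the quadratic forms of $\mathcal F-\mathcal F_\kappa$ and $\mathcal D$.

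The difference is in the $H^s$ upgrade, and this is where your plan has a genuine gap. The paper does \emph{not} work with $(-\Delta_\omega)^s$ on the moving-frame decomposition. Instead, once $\Omega_\infty$ is in hand, it linearizes around the \emph{fixed} equilibrium: writing $f=(\rho+g)M_{\kappa\Omega_\infty}$, the equation becomes
\[\partial_t g=-\tau(|J_f|)Lg-A(t)\cdot\nabla_\omega g+B(\omega)\cdot A(t)(\rho+g),\]
where $L=-M_{\kappa\Omega_\infty}^{-1}\nabla_\omega\cdot(M_{\kappa\Omega_\infty}\nabla_\omega\cdot)$ is the weighted Laplacian and $A(t)=\nu(|J_f|)\Omega_f-\tau(|J_f|)\kappa\Omega_\infty$ satisfies $|A(t)|\lesssim e^{-\lambda t}$ by the $L^2$ step. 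A \emph{modified} Sobolev norm $\|g\|_{\dot H^s_M}^2=\langle g,L^sg\rangle_M$ is introduced; Lemma~\ref{lemma-modified-Hs} proves $\|\cdot\|_{\dot H^s_M}\asymp\|\cdot\|_{H^s}$ by interpolation and establishes the commutator estimate $|\langle L^sg\,\nabla_\omega g\rangle_M|\lesssim\|g\|_{\dot H^s_M}^2$ via a resolvent formula for fractional powers. In this norm the energy estimate reads $\frac{\d}{\d t}\|g\|_{\dot H^s_M}\leqslant-\tau(j)\Lambda_\kappa\|g\|_{\dot H^s_M}+Ce^{-\lambda t}$, and since $\lambda<\tau(j)\Lambda_\kappa$, integration gives decay at the sharp rate $\lambda$ with a uniform constant.

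Your route via $(-\Delta_\omega)^s$ on $g_2$ would struggle exactly where you flag the difficulty: the Poincaré constant for $-\Delta_\omega$ on $g_2$-modes is $2n$, not $\Lambda_\kappa$, so you would not land on the constant in~\eqref{def-lambda}; and the time-dependence of the moving frame $\Omega_f(t)$ injects terms of order $|\dot\Omega_f|\sim e^{-\lambda t}$, the \emph{same} order as the target decay, which cannot be absorbed as a perturbation of a faster linear semigroup. Freezing $\Omega_\infty$ and tailoring the norm to $L$ is precisely the device that converts the ``$\lambda-\varepsilon$ for all $\varepsilon$'' of~\cite{frouvelle2012dynamics} into the genuine rate $\lambda$ with a constant $C$ independent of $f_0$.
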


\medskip
\noindent
We first outline the key steps.
Firstly, we want to get a lower bound for the dissipation term~$\mathcal D(f)$ in terms of~$\mathcal F(f)-\mathcal F_κ$, in order to get a Grönwall inequality coming from the conservation relation~\eqref{dissipation-free-energy}.
After a few computations, we get
\[\mathcal D(f)⩾2λ(\mathcal F(f)-\mathcal F_κ)+O((\mathcal F(f)-\mathcal F_κ)^{1+ε}).\]
With this lower bound, we obtain exponential decay of~$\mathcal F(f)-\mathcal F_κ$ (with rate~$2λ$), which also gives exponential decay of~$∥f-M_{κΩ_f}∥_{L^2}$ (with rate~$λ$) in virtue of~\eqref{est-L2-L2}.
We also prove that we can control the displacement~$\dot{Ω}_f$ by~$√{⟨ g_2^2⟩_M}$.
Hence we get that~$Ω_f$ is also converging exponentially fast towards some~$Ω_∞∈𝕊$ (with the same rate~$λ$).
After linearizing the kinetic equation~\eqref{KFP-homogeneous} around this equilibrium~$ρ M_{κΩ_∞}$, an energy estimate for a norm equivalent to the~$H^s$ norm gives then the exponential convergence for~$∥f-M_{κΩ_∞}∥_{H^s}$ with the same rate~$λ$.

\medskip
\noindent
We now give the detailed proof.

\begin{proof}[Proof of Theorem~\ref{thm-strong-stability-anisotropic}.] We fix~$s>\frac{n-1}2$ and we suppose~$(\frac{j}{c})'(κ)>0$.
We recall the notations of the proof of Proposition~\ref{prop-unstability-stability-anisotropic}:
\[f=M_{κΩ_f}(ρ+g_1)=M_{κΩ_f}(ρ+α(\cosθ-c)+g_2),\]
where~$\cosθ=ω\cdotΩ_f$ and~$α$, defined in~\eqref{def-alpha}, is such that
\begin{equation}
\label{def-alpha-appendix}
|J_f|=ρ c + α ⟨(\cosθ-c)^2⟩_M= j + α\, c',
\end{equation}
thanks to~\eqref{eq-c-prime}.
We have that~$⟨g_1⟩_M=⟨g_2⟩_M=0$, and~$⟨ω\,g_2⟩_M=0$.

The proof will be divided in three propositions. 
\begin{proposition}
\label{prop-step-1}
There exist constants~$δ>0$,~$ε>0$ and~$C$ such that, if initially, we have~$⟨ g_1^2⟩_M<δ$ and~$∥f_0-M_{κΩ_{f_0}}∥_{H^s}⩽1$, then for all time, we have 
\begin{gather*}
\mathcal F(f)⩾\mathcal F_κ,\\
\mathcal D(f)⩾2λ(\mathcal F(f)-\mathcal F_κ)-C(\mathcal F(f)-\mathcal F_κ)^{1+ε},
\end{gather*}
where the rate is given by~\eqref{def-lambda}:~$λ=\frac{cτ(j)}{j'}Λ_κ(\frac{j}{c})'$.
\end{proposition}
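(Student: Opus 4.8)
The plan is to deduce both assertions from a quantitative comparison, near the equilibrium family $\{ρ M_{κΩ},\,Ω∈𝕊\}$, between the second‑order expansions of the free energy $\mathcal F$ and of its dissipation $\mathcal D$, the coercivity being supplied by the weighted Poincaré inequality~\eqref{poincare-lambda}. I keep the decomposition $f=M_{κΩ_f}(ρ+α(\cosθ-c)+g_2)$ with $⟨g_2⟩_M=⟨ω\,g_2⟩_M=0$ and the identity $|J_f|=j+αc'$ of~\eqref{def-alpha-appendix}, writing $g_1=α(\cosθ-c)+g_2$. As in~\eqref{bound-infty-L2}, using the uniform‑in‑time $H^s$ bound of Theorem~\ref{theorem-existence-uniqueness}, one has $∥g_1∥_∞⩽C\,(⟨g_1^2⟩_M)^{ε}$ for some $ε>0$ depending only on $s,n$, so that every genuinely cubic remainder met below (in particular $⟨|g_1|^3⟩_M$) is $O((⟨g_1^2⟩_M)^{1+ε})$. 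The sign statement $\mathcal F(f)⩾\mathcal F_κ$ is then essentially Proposition~\ref{prop-unstability-stability-anisotropic}(ii): since $(\tfrac jc)'(κ)>0$ the quadratic form in~\eqref{estimate-free-energy} is positive definite, so $\mathcal F(f)-\mathcal F_κ⩾c_0⟨g_1^2⟩_M⩾0$ once $⟨g_1^2⟩_M$ is small, and this smallness propagates in time because $\mathcal F$ is nonincreasing, via $⟨g_1^2⟩_M⩽C(\mathcal F(f)-\mathcal F_κ)⩽C(\mathcal F(f_0)-\mathcal F_κ)⩽C'⟨g_1^2⟩_M(0)$. In particular $\mathcal F(f)-\mathcal F_κ$ and $⟨g_1^2⟩_M$ stay comparable, which lets me quote the final error indifferently as $C(\mathcal F(f)-\mathcal F_κ)^{1+ε}$.

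The core step is the lower bound on $\mathcal D(f)$. Starting from $\mathcal D(f)=τ(|J_f|)∫_𝕊 f\,|∇_ω(\ln f-k(|J_f|)\,ω·Ω_f)|^2\,\dω$, I write $\ln f=κ\cosθ-\ln Z+\ln(ρ+g_1)$, so the gradient equals $(κ-k(|J_f|))∇_ω\cosθ+∇_ω\ln(ρ+g_1)$. By Hypothesis~\ref{hyp-differentiable} and $|J_f|=j+αc'$ one gets $κ-k(|J_f|)=-\tfrac{c'}{j'}α+O(α^2)$, while $∇_ω\ln(ρ+g_1)=\tfrac1ρ(α∇_ω\cosθ+∇_ω g_2)+O(|g_1|\,|∇_ω g_1|)$. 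Collecting the part linear in the perturbation, and using $j'-ρc'=c(\tfrac jc)'$ together with $ρ=j/c$,
\[∇_ω\bigl(\ln f-k(|J_f|)\,ω·Ω_f\bigr)=A\,∇_ω\cosθ+\tfrac1ρ∇_ω g_2+(\text{h.o.t.}),\qquad A:=\frac{c\,(\tfrac jc)'}{ρ\,j'}\,α.\]
Replacing $f$ by $ρM_{κΩ_f}$ and $τ(|J_f|)$ by $τ(j)$ modulo higher‑order terms, and expanding the square with the identities $⟨|∇_ω\cosθ|^2⟩_M=⟨\sin^2θ⟩_M=\tfrac{(n-1)c}{κ}$ and $⟨∇_ω\cosθ·∇_ω g_2⟩_M=κ⟨\cos^2θ\,g_2⟩_M$ (integration by parts against $M$, using $⟨g_2⟩_M=⟨ω g_2⟩_M=0$), I obtain
\[\mathcal D(f)=τ(j)\,ρ\Bigl[A^2\tfrac{(n-1)c}{κ}+\tfrac{2Aκ}{ρ}⟨\cos^2θ\,g_2⟩_M+\tfrac1{ρ^2}⟨|∇_ω g_2|^2⟩_M\Bigr]+(\text{h.o.t.}).\]

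It then remains to check that this dominates $2λ(\mathcal F(f)-\mathcal F_κ)$ up to an $O((\mathcal F(f)-\mathcal F_κ)^{1+ε})$ error. Two algebraic facts do the job. First,~\eqref{poincare-lambda} applied to $g_2$ gives $⟨|∇_ω g_2|^2⟩_M⩾Λ_κ⟨g_2^2⟩_M$, and comparing with the $⟨g_2^2⟩_M$ term of~\eqref{estimate-free-energy} the $g_2$‑channel of $\mathcal D$ exceeds $2λ\cdot\tfrac1{2ρ}⟨g_2^2⟩_M$ by the strictly positive factor $\tfrac{j'}{c\,(\tfrac jc)'}=(1-\tfrac{jc'}{cj'})^{-1}>1$. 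Second,~\eqref{poincare-lambda} applied to $\cosθ-c$ gives $Λ_κ c'⩽\tfrac{(n-1)c}{κ}$, and here the inequality is \emph{strict}, since $\cosθ$ is not an eigenfunction of the weighted Laplace operator (its image carries a genuine $\cos^2θ$ term); hence the $α^2$‑channel of $\mathcal D$, namely $τ(j)ρ A^2\tfrac{(n-1)c}{κ}$, exceeds $2λ\cdot\tfrac1{2ρ}\tfrac{cc'}{j'}(\tfrac jc)'α^2$ — and that comparison, once worked out, is precisely the identity that pins down the sharp rate $λ=\tfrac{cτ(j)}{j'}Λ_κ(\tfrac jc)'$ of~\eqref{def-lambda}. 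Both surpluses being strictly positive, a single application of Young's inequality to the cross term (which is $O(|α|\sqrt{⟨g_2^2⟩_M})$ by Cauchy–Schwarz) absorbs it in both channels at once; the leftover higher‑order terms carrying a factor $⟨|∇_ω g_2|^2⟩_M$ are reabsorbed into the dissipation's good leading term once $∥g_1∥_∞$ is small, and all remaining remainders are $O((⟨g_1^2⟩_M)^{1+ε})=O((\mathcal F(f)-\mathcal F_κ)^{1+ε})$, which yields the stated estimate.

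The step I expect to be the real obstacle is exactly this last balancing: one must split the two Poincaré surpluses so that a single Young parameter kills the cross term in both the $α^2$ and the $⟨g_2^2⟩_M$ channels simultaneously, and then verify that the inequality $Λ_κ c'⩽\tfrac{(n-1)c}{κ}$ is the one making the $α^2$ coefficient land on $2λ$ with exactly the claimed $λ$. A more pedestrian, but error‑prone, difficulty is sorting the many remainder terms into those containing a $⟨|∇_ω g_2|^2⟩_M$ factor (reabsorbed into $\mathcal D$) and those that do not (pushed into the $(\mathcal F(f)-\mathcal F_κ)^{1+ε}$ error through~\eqref{bound-infty-L2}), all while keeping every constant independent of $f_0$.
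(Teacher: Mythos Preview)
Your expansion of $\mathcal D(f)$ is essentially the paper's, and the sign statement $\mathcal F(f)\geqslant\mathcal F_\kappa$ is handled the same way. The divergence comes at the very step you flag as the obstacle: the treatment of the cross term $2A\kappa\rho^{-1}\langle\cos^2\theta\,g_2\rangle_M$.

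The paper does \emph{not} split the quadratic form and then repair it with Young's inequality. Instead it observes that your leading term is precisely $\tau(j)\rho\,\langle|\nabla_\omega h|^2\rangle_M$ with $h=A(\cos\theta-c)+\tfrac1\rho g_2$, a mean-zero function, and applies the Poincaré inequality~\eqref{poincare-lambda} to $h$ \emph{as a whole}. Because $\langle(\cos\theta-c)\,g_2\rangle_M=0$, the resulting $\langle h^2\rangle_M$ has no cross term: one gets directly
\[
\mathcal D(f)\geqslant \frac{\Lambda_\kappa\tau(j)}{\rho}\Big[\frac{c^2c'}{(j')^2}\Big(\tfrac{j}{c}\Big)'^{\,2}\alpha^2+\langle g_2^2\rangle_M\Big]+\text{h.o.t.}
\]
In this form the $\alpha^2$ coefficient matches the target $2\lambda\cdot\frac{1}{2\rho}\frac{cc'}{j'}(\tfrac jc)'$ with \emph{equality} (this is what actually pins down $\lambda$), while the $\langle g_2^2\rangle_M$ coefficient dominates the target by the factor $\frac{j'}{c(\tfrac jc)'}>1$. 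No Young, no balancing.

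Your route, by contrast, produces a genuine quadratic cross term at the gradient level which you propose to absorb using (i) the strict gap $\frac{(n-1)c}{\kappa}-\Lambda_\kappa c'>0$ and (ii) the surplus $1-\frac{c}{j'}(\tfrac jc)'>0$. For this to succeed you would need, after Cauchy--Schwarz and Young, an inequality of the shape
\[
\kappa^2\,\langle(\cos^2\theta-\langle\cos^2\theta\rangle_M)^2\rangle_M\ \leqslant\ \Big(1-\tfrac{c}{j'}(\tfrac jc)'\Big)\Lambda_\kappa\,\Big(\tfrac{(n-1)c}{\kappa}-\Lambda_\kappa c'\Big),
\]
which you do not verify and which is not automatic for a given $\kappa>0$ (note that the right-hand surplus $\frac{(n-1)c}{\kappa}-\Lambda_\kappa c'$ vanishes as $\kappa\to0$, while the left side does not). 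So as written there is a gap, and your remark that the $\alpha^2$ comparison ``pins down'' $\lambda$ is misplaced: in your scheme that channel has surplus, not equality. The fix is simply to apply Poincar\'e to the sum before splitting; the $L^2_M$-orthogonality of $\cos\theta-c$ and $g_2$ then removes the cross term for free.
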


\begin{proof}
We apply the stability results of the second part of Proposition~\ref{prop-unstability-stability-anisotropic}, with the constant~$K$ being~$1+∥ρ M_{κΩ_{f_0}}∥_{H^s}$ (this does not depend on~$Ω_{f_0}$).
This gives us constants~$δ_1>0$,~$ε>0$,~$C_1$,~$C_2$ such that if we have initially~$⟨ g_1^2⟩_M<δ_1$, then (see formulas~\eqref{bound-infty-L2}-\eqref{est-L2-L2})
\begin{gather}
\nonumber\mathcal F(f)⩾\mathcal F_κ,\\
\label{est-infty-L2-app}∥g_1∥_∞⩽ C_1⟨ g_1^2⟩^{ε}_{M},\\
\label{est-L2-energy-app}\big|\frac1{2ρ}[\frac{cc'}{j'}(\frac{j}{c})'\,α^2 + ⟨ g_2^2⟩_M] -(\mathcal F(f)-\mathcal F_κ)\big| ⩽ C_2(\mathcal F(f)-\mathcal F_κ)^{1+ε},\\
\label{est-g1-energy-app}⟨ g_1^2⟩_{M}⩽ C_3(\mathcal F(f)-\mathcal F_κ).
\end{gather}
We get, using the definition~\eqref{def-dissipation}:
\begin{align*}
\mathcal D(f)&=τ(|J_f|)⟨(ρ+g_1)|∇_ω[\ln(ρ+g_1)-(k(|J_f|)-κ)ω\cdotΩ_f]|^2⟩_{M}\\
&=τ(|J_f|)⟨\frac1{ρ+g_1}|∇_ω g_1|^2+(ρ+g_1)(k(|J_f|)-κ)^2|∇_ω(ω\cdotΩ_f)|^2⟩_{M}\\
&\hspace{2em} - 2 τ(|J_f|)⟨∇_ω g_1·(k(|J_f|)-κ)∇_ω(ω\cdotΩ_f)⟩_{M}.
\end{align*}
Using the fact that~$\frac1{ρ+g_1}⩾\frac1{ρ^2}(ρ-∥g_1∥_∞)$, we obtain
\begin{align*}
\mathcal D(f)&⩾τ(|J_f|)(ρ-∥g_1∥_∞)⟨\frac1{ρ^2}|∇_ω g_1|^2+(k(|J_f|)-κ)^2|∇_ω(ω\cdotΩ_f)|^2⟩_{M}\\
&\hspace{2em} - 2 τ(|J_f|)⟨∇_ω g_1·(k(|J_f|)-κ)∇_ω(ω\cdotΩ_f)⟩_{M}\\
\mathcal D(f)&⩾τ(|J_f|)(ρ-∥g_1∥_∞)⟨|∇_ω[\frac{g_1}{ρ}-(k(|J_f|)-κ)ω\cdotΩ_f]|^2⟩_{M}\\
&\hspace{2em}+τ(|J_f|)\frac2{ρ}∥g_1∥_∞(k(|J_f|)-κ)⟨ g_1(κ|∇_ω(ω\cdotΩ_f)|^2-(n-1)ω\cdotΩ_f)⟩_{M}.
\end{align*}
where we used Green’s formula to evaluate~$⟨∇_ω g_1·∇_ω(ω·Ω_f)⟩_M$.

First of all, using the definition~\eqref{def-alpha-appendix} we can get that~$|k(|J_f|)-κ-α\frac{c'}{j'}|⩽ C_4α^2$, for a constant~$C_4$.
Then we use the Poincaré inequality~\eqref{poincare-lambda}:
\[⟨|∇_ω g|^2⟩_{M}⩾Λ_κ⟨(g-⟨ g⟩_{M})^2⟩_{M}.\]
Hence, since~$|α|$ is controlled by~$\sqrt{⟨ g_1^2⟩_M}$ (we recall that~$⟨ g_1^2⟩_M=c'α^2+⟨ g_2^2⟩_M$), and since we also have~$||J_f|-j|⩽ C_5|α|$ for a constant~$C_5$, we get
\begin{align*}
\begin{split}\mathcal D(f)&⩾Λ_κτ(|J_f|)(ρ-∥g_1∥_∞)⟨|\frac{g_1}{ρ}-(k(|J_f|)-κ)(\cosθ-c)|^2⟩_{M}\\
&\hspace{2em}-C_6∥g_1∥_∞⟨ g_1^2⟩_{M}\end{split}\\
&⩾Λ_κτ(j)ρ⟨|\frac{g_2}{ρ}+α(\frac1{ρ}-\frac{c'}{j'})(\cosθ-c)|^2⟩_{M}-C_7∥g_1∥_∞⟨ g_1^2⟩_{M}\\
&=\frac{Λ_κτ(j)}{ρ}[\frac{c^2c'}{(j')^2}((\frac{j}{c})')^2\,α^2 +⟨ g_2^2⟩_{M}] -C_7∥g_1∥_∞⟨ g_1^2⟩_{M},
\end{align*}
where~$C_6$ and~$C_7$ are constants.
Together with the fact that~$\frac{c}{j'}(\frac{j}{c})'⩽1$ (this is equivalent to~$j c'⩾0$), and with equations~\eqref{est-infty-L2-app}-\eqref{est-g1-energy-app}, this ends the proof.
\end{proof}

\begin{proposition}
\label{prop-step-2}
There exist positive constants~$C,\widetilde C$ and~$δ$ such that if initially, we have~$⟨ g_1^2⟩_M<δ$ and~$∥f_0-ρ M_{κΩ_{f_0}}∥_{H^s}⩽1$, then for all time, we have
\[∥f-ρM_{κΩ_f}∥_{L^2}⩽ C∥f_0-ρ M_{κΩ_{f_0}}∥_{L^2}\,e^{-λ t},\]
and furthermore, there exists~$Ω_∞∈𝕊$ such that for all time, we have
\begin{equation}
\label{decay-Omega}
|Ω_f-Ω_∞|⩽\widetilde C ∥f_0-ρ M_{κΩ_{f_0}}∥_{L^2}\,e^{-λ t}.
\end{equation}
\end{proposition}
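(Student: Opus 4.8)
The plan is to turn the dissipation lower bound of Proposition~\ref{prop-step-1} into a closed differential inequality for the free energy gap~$y(t):=\mathcal F(f(t))-\mathcal F_κ$, then transport the resulting exponential decay first to~$∥f-ρM_{κΩ_f}∥_{L^2}$ through~\eqref{est-L2-L2}, and finally to~$Ω_f(t)$ through an ODE whose transverse part is quadratically small.

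First I would note that, by Proposition~\ref{prop-step-1} and the conservation relation~\eqref{dissipation-free-energy}, $y(t)⩾0$ and~$y'(t)=-\mathcal D(f)⩽-2λ\,y+C\,y^{1+ε}$. Setting~$z(t)=y(t)\,e^{2λt}$ turns this into~$z'⩽C\,e^{-2λεt}\,z^{1+ε}$, and since~$∫_0^∞e^{-2λεt}\,\d t<∞$ a standard continuation/Grönwall argument shows that, provided~$z(0)=y(0)$ is small enough, $z$ stays bounded (say by~$2y(0)$) for all time, so that~$y(t)⩽2\,y(0)\,e^{-2λt}$. On the other hand, the free energy expansion~\eqref{estimate-free-energy} together with the~$L^∞$ bound~\eqref{bound-infty-L2} and the equivalence of~$⟨g_1^2⟩_M$ with~$∥f_0-ρM_{κΩ_{f_0}}∥_{L^2}^2$ gives~$y(0)⩽C∥f_0-ρM_{κΩ_{f_0}}∥_{L^2}^2$; plugging this and~\eqref{est-L2-L2} (i.e.~$∥f-ρM_{κΩ_f}∥_{L^2}^2⩽C_3\,y(t)$) together and taking square roots yields the first assertion.

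Then I would derive the equation for~$Ω_f$. Since~$∥f-ρM_{κΩ_f}∥_{L^2}$ stays small, $|J_f|=j+α\,c'$ remains close to~$j(κ)>0$, so~$Ω_f=J_f/|J_f|$ is well defined and~$\dot Ω_f=\tfrac1{|J_f|}P_{Ω_f^⊥}\dot J_f$. Multiplying~\eqref{KFP-homogeneous} by~$ω$, integrating on~$𝕊$ and using~\eqref{eq-divA} together with~$Δ_ω ω=-(n-1)ω$, I get~$\dot J_f=ν(|J_f|)∫_𝕊P_{ω^⊥}Ω_f\,f\,\d ω-(n-1)τ(|J_f|)J_f$, whose last term is parallel to~$Ω_f$ and disappears under~$P_{Ω_f^⊥}$. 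Writing~$f=M_{κΩ_f}(ρ+α(\cosθ-c)+g_2)$, the contributions of the~$ρ$ and~$α(\cosθ-c)$ parts to~$∫_𝕊P_{ω^⊥}Ω_f\,f\,\d ω$ are integrals of~$P_{ω^⊥}Ω_f$ against a density invariant under rotations about the axis~$Ω_f$, hence are again parallel to~$Ω_f$; only the~$g_2$ part survives the projection, and Cauchy--Schwarz (using that~$M_{κΩ_f}$ is a probability density, $|P_{ω^⊥}Ω_f|⩽1$, and the boundedness of~$ν(|J_f|)/|J_f|$ from Hypothesis~\ref{hyp-prop-chaos}) gives~$|\dot Ω_f|⩽C\sqrt{⟨g_2^2⟩_M}$. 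Since~$⟨g_2^2⟩_M⩽⟨g_1^2⟩_M⩽C_3\,y(t)⩽C∥f_0-ρM_{κΩ_{f_0}}∥_{L^2}^2\,e^{-2λt}$, this is an exponentially small, time-integrable bound, so~$Ω_f(t)$ is Cauchy as~$t→∞$ and converges to some~$Ω_∞∈𝕊$; integrating~$|\dot Ω_f|$ from~$t$ to~$∞$ produces~\eqref{decay-Omega} with~$\widetilde C=C/λ$.

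The hard parts will be the two places where structure, rather than bookkeeping, is used: obtaining the decay of~$y$ with a constant independent of~$f_0$ requires the continuation argument keeping~$y$ in the regime where~$C\,y^{1+ε}$ is dominated by~$2λ\,y$; and the estimate~$|\dot Ω_f|⩽C\sqrt{⟨g_2^2⟩_M}$ hinges on the cancellation that the rotationally symmetric low-order parts of~$f$ produce nothing transverse to~$Ω_f$ in~$\dot J_f$. Everything else reduces to the equivalences between~$⟨g_1^2⟩_M$, $α^2+⟨g_2^2⟩_M$, $∥f-ρM_{κΩ_f}∥_{L^2}^2$ and~$\mathcal F(f)-\mathcal F_κ$ already recorded in the proof of Proposition~\ref{prop-unstability-stability-anisotropic}.
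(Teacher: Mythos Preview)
Your proposal is correct and follows essentially the same route as the paper: a Grönwall argument on the free-energy gap~$y=\mathcal F(f)-\mathcal F_κ$ combined with the equivalence~\eqref{est-L2-L2}, followed by the computation of~$\dot Ω_f$ via~$P_{Ω_f^⊥}\dot J_f$ and the observation that only the~$g_2$ piece survives the projection, yielding~$|\dot Ω_f|⩽C\sqrt{⟨ g_2^2⟩_M}$ and hence the convergence~\eqref{decay-Omega} by integration in time. The only cosmetic difference is your explicit change of variable~$z=y\,e^{2λt}$ to handle the perturbed Grönwall inequality, where the paper simply says ``solving the differential inequality''; and the paper writes the surviving transverse term as~$-\frac{ν(|J_f|)}{|J_f|}P_{Ω_f^⊥}⟨\cosθ\,ω\,g_2⟩_M$ rather than leaving it as~$P_{Ω_f^⊥}⟨ P_{ω^⊥}Ω_f\,g_2⟩_M$, but these are the same vector.
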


\begin{proof}
By Proposition~\ref{prop-step-1}, using the expression~$⟨ g_1^2⟩_M=c'α^2+⟨ g_2^2⟩_M$  and inequalities~\eqref{est-L2-energy-app} and~\eqref{est-g1-energy-app}, we get that there exist constants~$δ_1>0$ and~$C_1$,~$C_2$, and~$\widetilde C_2>0$ such that if~$⟨ g_1^2⟩_M<δ_1$, then~$\mathcal F(f)⩾\mathcal F_κ$, and for all time,
\begin{gather}
\label{gronwall-energy}\frac{\d}{\d t}(\mathcal F(f)-\mathcal F_κ)=-\mathcal D(f)⩽-2λ(\mathcal F(f)-\mathcal F_κ)+C_1(\mathcal F(f)-\mathcal F_κ)^{1+ε},\\
\label{equiv-g1-energy}\widetilde C_2(\mathcal F(f)-\mathcal F_κ)⩽⟨ g_1^2⟩_M⩽ C_2(\mathcal F(f)-\mathcal F_κ).
\end{gather}
Solving the differential inequality~\eqref{gronwall-energy} for~$F(f)-\mathcal F_κ$ sufficiently small, we get that, up to taking~$δ_2<δ_1$, if~$⟨ g_1^2⟩_M<δ_2$, we get a constant~$C_3$ such that
\[\mathcal F(f)-\mathcal F_κ⩽ C_3(\mathcal F(f_0)-\mathcal F_κ) e^{-2λ t}.\]
This gives the first part of the proposition, with~\eqref{equiv-g1-energy}, and the fact that there exists constants~$C_4$, and~$\widetilde C_4$ such that
\[\widetilde C_4∥f-ρM_{κΩ_f}∥_{L^2}⩽√{⟨ g_1^2⟩_M}⩽ C_4∥f-ρM_{κΩ_f}∥_{L^2}.\]
Now we compute the time derivative of~$Ω_f$, using~$\frac{\d}{\d t}Ω_f=\frac1{|J_f|}P_{Ω_f^⊥}\frac{\d}{\d t}J_f$ and~\eqref{ODE-J}:
\[\frac{\d}{\d t}Ω_f=\frac{ν(|J_f|)}{|J_f|}P_{Ω_f^⊥}⟨ P_{ω^⊥} Ω_f (ρ+ α(\cos θ - c) + g_2)⟩_M=-\frac{ν(|J_f|)}{|J_f|}P_{Ω_f^⊥}⟨\cos θ\, ω g_2⟩.\]
So there exist constants~$C_5$ and~$C_6$ such that
\[|\dot{Ω}_f|⩽ C_5√{⟨ g_2^2⟩_M}⩽C_5√{⟨ g_1^2⟩_M}⩽ C_6 ∥f_0-ρM_{κΩ_{f_0}}∥_{L^2}\,e^{-λ t},\]
which, after integration in time, gives the second part of the proposition.
\end{proof}

We can now prove the last step which leads to Theorem~\ref{thm-strong-stability-anisotropic}.

\begin{proposition}
\label{prop-step-3}
There exist constants~$δ>0$ and~$C>0$, such that for any initial condition~$f_0$ with mass~$ρ$ satisfying~$∥f_0-ρ M_{κΩ_{f_0}}∥_{H^s}<δ$, there exists~$Ω_∞∈𝕊$ such that
\[∥f-ρ M_{κΩ_∞}∥_{H^s}⩽ C∥f_0-ρ M_{κΩ_{f_0}}∥_{H^s}e^{-λ t}.\]
\end{proposition}
\begin{proof}
All along this proof we will use the symbol~$\asymp$ to denote quantities of the same order: for~$a$ and~$b$ two nonnegative quantities,~$a\asymp b$ means that there exist two positive constants~$C_1,C_2$ such that~$C_1a⩽ b⩽ C_2a$.

By the estimation~$⟨ g_1^2⟩_M\asymp ∥f-ρ M_{κΩ_f}∥^2_{L^2}$ (since the weight~$M_{κΩ}$ is bounded above and below), and by a simple Sobolev embedding ($L^2⊂ H^s$), there exists a constant~$δ_1>0$ such that if~${∥f_0-ρ M_{κΩ_{f_0}}∥_{H^s}<δ_1}$, then we are in the hypotheses of Proposition~\ref{prop-step-2}.
We suppose we are in that case and we can then go back to the original equation and perform a linear analysis around~$ρ M_{κΩ_∞}$.
We will now write~$⟨ g⟩_M$ for~$∫_𝕊g M_{κΩ_∞}\d ω$.

If we write~$f=(ρ+g)M_{κΩ_∞}$, then the equation becomes
\begin{equation}
\label{KFP-expand-g}
∂_t g= - τ(|J_f|) L g - A(t)·∇_ω g + B(ω)· A(t)(ρ+g),
\end{equation}
where
\begin{gather*}
Lg=-\frac{1}{M_{κΩ_∞}}∇_ω·(M_{κΩ_∞}∇_ω g)=-(Δ_ω g+κΩ_∞·∇_ω g),\\
A(t)=ν(|J_f|)Ω_f-τ(|J_f|)κΩ_∞,\\
B(ω)=(n-1)ω-κ P_{ω^⊥}Ω_∞.
\end{gather*}
Let us remark that the linear operator~$L$ is a coercive selfadjoint operator for the inner product~$(g_1,g_2)↦⟨ g_1g_2⟩_M$ (also denoted~$⟨ g_1,g_2⟩_M$ in the following), on the space~$\dot{L}_M^2⊂ L^2$ of functions~$g$ such that~$⟨ g⟩_M=0$ (thanks to the Poincaré inequality~\eqref{poincare-lambda}).
Indeed we have
\[⟨ g_1,Lg_2⟩_M=⟨∇_ω g_1·∇_ω g_2⟩_M.\]
It is classical to prove that the inverse of~$L$ is a positive selfadjoint compact operator of~$\dot{L}_M^2$.
Hence, by spectral decomposition, we can define the operator~$L^s$, and use it to define a new Sobolev norm by
\[∥g∥^2_{\dot{H}_M^s}=⟨ g,L^s g⟩_M.\]
We will use a lemma (the proof of which is postponed at the end of this section) about estimations for this norm, and about a commutator estimate:
\begin{lemma}
\label{lemma-modified-Hs}
For~$s⩾0$, we have~$∥g∥_{\dot{H}_M^s}\asymp ∥g∥_{H^s}$, for functions~$g$ in~$\dot{H}_M^s=H^s∩\dot{L}_M^2$.

Furthermore, for~$g∈\dot{H}_M^s$, the (vector valued) quantity~$⟨ L^s g ∇_ω g⟩_{M}$ is well defined and there is a constant~$C$ such we have:
\begin{equation}
|⟨ L^s g ∇_ω g⟩_M|⩽ C∥g∥^2_{\dot{H}_M^s}.
\label{commutator-estimate-lemma}
\end{equation}
\end{lemma}

We will also use the following Poincaré estimate, for~$g∈\dot{H}_M^s$, with the same constant~$Λ_κ$ as in~\eqref{poincare-lambda}:
\[⟨g,L^{s+1}g⟩_M=⟨|∇(L^{\frac{s}2}g)|^2⟩_M⩾Λ_κ⟨(L^{\frac{s}2}g)^2⟩_M=Λ_κ∥g∥^2_{\dot{H}_M^s}\]
We now multiply the equation~\eqref{KFP-expand-g} by~$L^s g$ and integrate with respect to the measure~$M_{κΩ_∞}\dω$. We get
\[\frac12\frac\d{\d t}∥g∥^2_{\dot{H}_M^s}⩽-τ(|J_f|)Λ_κ∥g∥^2_{\dot{H}_M^s}+ |A(t)|(C_1\,∥g∥^2_{\dot{H}_M^s}+∥g∥_{\dot{H}_M^s}∥B(ω)(ρ+g)∥_{\dot{H}_M^s}),\]
where~$∥B(ω)(ρ+g)∥_{\dot{H}_M^s}$ denotes the maximum of~$∥e·B(ω)(ρ+g)∥_{\dot{H}_M^s}$ for~$e∈𝕊$. Since~$ω↦ e· B(ω)$ is smooth, the multiplication by~$e· B(ω)$ is a continuous operator from~$\dot{H}_M^s$ to~$H^s$ when~$s$ is an integer, so by interpolation this is true for all~$s$.
Therefore, we get a constant~$C_2$ such that for all~$g∈\dot{H}_M^s$, we have
\begin{equation}
\label{estimate-multB}
∥B(ω)g∥_{\dot{H}_M^s}⩽ C_2\,∥g∥_{\dot{H}_M^s}.
\end{equation}
We finally get
\[\frac\d{\d t}∥g∥_{\dot{H}_M^s}⩽-τ(|J_f|)Λ_κ∥g∥_{\dot{H}_M^s}+ |A(t)|((C_1+C_2)\,∥g∥_{\dot{H}_M^s}+∥B(ω)ρ∥_{\dot{H}_M^s}).\]
Now, applying Proposition~\ref{prop-step-2},  there exist constants~$C_3,C_4,C_5$ such that
\begin{align*}
|A(t)|&⩽ν(|J_f|)|Ω_f-Ω_∞|+[ν(|J_f|)-τ(|J_f|)κ]|Ω_∞|\\
&⩽ν(|J_f|)|Ω_f-Ω_∞|+τ(|J_f|)[k(|J_f|)-k(j(κ))]\\
&⩽ C_3|Ω_f-Ω_∞|+C_4||J_f|-j(κ)|\\
&⩽ C_5 ∥f_0-ρ M_{κΩ_{f_0}}∥_{L^2}\,e^{-λ t}⩽ C_5 ∥f_0-ρ M_{κΩ_{f_0}}∥_{\dot{H}_M^s}\,e^{-λ t}.
\end{align*}
in virtue of~\eqref{est-1stmomentnorm}.
By the same argument, we get, for a constant~$C_6$, that
\[|τ(|J_f|)-τ(j)|⩽ C_6∥f_0-ρ M_{κΩ_{f_0}}∥_{\dot{H}_M^s}\,e^{-λ t},\]
so we finally obtain, together with a uniform bound on~$∥g∥_{\dot{H}_M^s}$ coming from Theorem~\ref{theorem-existence-uniqueness} (and independent of~$f_0$ since~$∥f_0-ρ M_{κΩ_{f_0}}∥_{H^s}<δ_1$), a constant~$C_7$ such that
\[\frac\d{\d t}∥g∥_{\dot{H}_M^s}⩽-τ(j)Λ_κ∥g∥_{\dot{H}_M^s}+C_7∥f_0-ρ M_{κΩ_{f_0}}∥_{\dot{H}_M^s}\,e^{-λ t} .\]
We solve this inequality and we get
\[∥g∥_{\dot{H}_M^s}⩽∥g_0∥_{\dot{H}_M^s}\exp(-τ(j)Λ_κ t) +C_7∥f_0-ρ M_{κΩ_{f_0}}∥_{\dot{H}_M^s}\frac{e^{-λ t}-e^{-τ(j)Λ_κ t}}{τ(j)Λ_κ-λ},\]
and this gives the final estimation, using the fact that~$λ<τ(j)Λ_κ$ (this is equivalent, by definition~\eqref{def-lambda} of~$λ$, to~$(\frac{j}{c})'<\frac{j'}c$, and we indeed have~$j \,c'>0$), and that
\begin{align*}
∥g_0∥_{\dot{H}_M^s}&\asymp ∥f_0-ρ M_{κΩ_∞}∥_{\dot{H}_M^s}\\
&⩽∥f_0-ρ M_{κΩ_{f_0}}∥_{\dot{H}_M^s}+C_8|Ω_{f_0}-Ω_∞|\\
&⩽ C_9 ∥f_0-ρ M_{κΩ_{f_0}}∥_{H^s},
\end{align*}
in virtue of~\eqref{est-1stmomentnorm}, Lemma~\ref{lemma-modified-Hs} and~\eqref{decay-Omega} (we have~$s>\frac{n-1}2$ so~$L^2⊂H^s$ is a continuous embedding).
\end{proof}
Finally, Proposition~\ref{prop-step-3} can be refined since, thanks to the estimation~\eqref{est-1stmomentnorm}, we only need to control~$∥f_0-ρ M_{κΩ}∥_{\dot{H}_M^s}$ for a given~$Ω∈𝕊$ in order to ensure that~$∥f_0-ρ M_{κΩ_{f_0}}∥_{\dot{H}_M^s}$ is sufficiently small, and this ends the proof of Theorem~\ref{thm-strong-stability-anisotropic}.
\end{proof}

\begin{proof}[Proof of Lemma~\ref{lemma-modified-Hs}] We first define the space~$\dot{H}_M^s$ as the completion of~$C^∞(𝕊)∩\dot{L}_M^2$ for~$∥·∥_{\dot{H}_M^s}$.
The first estimate (which amounts to prove that~$∥g∥_{\dot{H}_M^s}\asymp ∥g∥_{H^s}$ for smooth functions~$g∈ C^∞(𝕊)∩\dot{L}_M^2$) is true when~$s$ is an integer: indeed~$L^s$ and~$(-Δ_ω)^s$ are simple differential operators (of degree~$2s$), and these estimates can be done by induction on~$s$: when~$s=2p$ is even, we write
\[\begin{cases}
⟨ g,L^s g⟩_M=∥L^pg∥_{L^2_M}^2\asymp ∥L^pg∥_2^2\\ ∥g∥_{H^s}^2= ∥(-Δ)^pg∥_2^2\asymp ∥(-Δ)^pg∥_{L^2_M}^2.
\end{cases}\]
In the first case,~$L$ is decomposed as~$(-Δ_ω)-κΩ_∞·∇_ω$ to estimate~$∥L^pg∥_2^2$ in terms of~$∥g∥_{H^s}^2$, and in the second case~$-Δ_ω$ is decomposed as~$L+κΩ_∞·∇_ω$ to estimate~$∥(-Δ)^pg∥_{L^2_M}^2$ in terms of~$⟨ g,L^s g⟩$.
When~$s=2p+1$ is odd, the same argument applies, writing
\[\begin{cases}
⟨ g,L^s g⟩_M=\big{∥}|∇_ω(L^pg)|\big{∥}_{L^2_M}^2\asymp \big{∥}|∇_ω(L^pg)|\big{∥}_2^2\\ ∥g∥_{H^s}^2= \big{∥}|∇_ω(-Δ)^pg|\big{∥}_2^2\asymp \big{∥}|∇_ω(-Δ)^pg|\big{∥}_{L^2_M}^2.
\end{cases}\]
Finally, the general case is done by interpolation, for~$s=n+θ$, with~$θ∈(0,1)$. We refer the reader to~\cite{tartar2007introduction} for an introduction to interpolation spaces, and we will denote~$(F_1,F_2)_{(θ,p)}$ the interpolation space between~$F_1$ and~$F_2$ using the real interpolation method. 
Using the so-called~$K$-method (see~\cite[Lecture~$22$]{tartar2007introduction}), it consists in the space of elements~$u∈F_1+F_2$ such that~$∥u∥_{θ,p}<+∞$, together with the norm~$∥·∥_{θ,p}$, where
\[∥u∥_{θ,p}=\Big(∫_0^∞[t^{-θ}K(t,u)]^q\frac{\d t}{t}\Big)^\frac1q,\quad\text{with}\quad K(t,u)=\inf_{\substack{u=u_1+u_2,\\u_1∈F_1,u_2∈F_2}}∥u_1∥_{F_1}+t∥u_2∥_{F_2}.\]
We will use the following result (see~\cite[Lemma~$23.1$]{tartar2007introduction}): 
if~$(X,μ)$ is a measured space and~$w_0,w_1$ are two weight functions, we have
\begin{equation}
\label{weighted-L2-interpolation}
(L^2(w_0\dμ),L^2(w_1\dμ))_{(θ,2)}=L^2(w_0^{1-θ}w_1^θ\dμ),
\end{equation}
where, for a weight function~$w⩾0$, the weighted space~$L^2(w\dμ)$ denotes the functions~$h$ such that~$∥h∥_{L^2(w\dμ)}^2=∫_Xh^2(x)w(x)\dμ(x)$ is finite.
Now if~$(g_i)_{i∈ℕ}$ is an orthonormal basis (for the dot product~$⟨·,·⟩_M$) of eigenvectors of~$L$ (associated to the eigenvalues~$(λ_i)$), it is easy to see that the map~$h↦(⟨h,g_i⟩_M)_{i∈ℕ}$ is an isometry between~$\dot{H}^s_M$ and the weighted~$ℓ^2$ space with weight~$(λ_i)_{i∈ℕ}$ (it corresponds to~$L^2(w\dμ)$ where~$X=ℕ$,~$μ$ is the counting measure, and~$w(i)=λ_i$). Therefore, we obtain with~\eqref{weighted-L2-interpolation} that~$\dot{H}_M^s=(\dot{H}_M^n,\dot{H}_M^{n+1})_{(θ,2)}$, and by the same argument~${H^s=(H^n,H^{n+1})_{(θ,2)}}$. So we finally get, with equivalence of norms:
\[H^s∩\dot{L}_M^2=(H^n∩\dot{L}_M^2,H^{n+1}∩\dot{L}_M^2)_{(θ,2)}=(\dot{H}_M^n,\dot{H}_M^{n+1})_{(θ,2)}=\dot{H}_M^s.\]
To get the estimation~\eqref{commutator-estimate-lemma}, we first observe that it is a commutator estimate.
Indeed, by integration by parts for a given~$e∈𝕊$, we get that the adjoint operator of~$e·∇_ω$ (for~$⟨·,·⟩_M$) is~$-e·∇_ω + e· B(ω)$, where~$B(ω)=(n-1)ω-κ P_{ω^⊥}Ω_∞$ (the same expression as in the proof of Proposition~\ref{prop-step-3}).
So, splitting the left part of~\eqref{commutator-estimate-lemma} in two halves, we are led to show that for~$g∈\dot{H}_M^s$, we have
\[\frac12|⟨ g[L^s,∇_ω]g⟩_M + ⟨ B(ω)g L^s g⟩_M|⩽ C⟨ g,L^s g⟩_M.\]
Using~\eqref{estimate-multB}, it is equivalent to find a constant~$\widetilde C$ such that for all~$g∈\dot{H}_M^s$, we have
\begin{equation}
|⟨ g[L^s,∇_ω]g⟩_M|⩽ C⟨ g,L^s g⟩_M.
\label{estimate-commutator}
\end{equation}
In the case~$s=1$, by using Schwartz Theorem, we see that~$[L,∇_ω]=[-Δ_ω,∇_ω]$.
It is proven in Lemma~$2.1$ of~\cite{frouvelle2012dynamics} that~\eqref{commutator-estimate-lemma} is true in the limit case where~$κ=0$.
This means that~$[(-Δ_ω)^s,∇_ω]$ is an operator of degree~$2s$.
In particular~$[-Δ_ω,∇_ω]$ is a differential operator of degree~$2$.
Actually, using Lemma~$A.5$ of~\cite{frouvelle2012dynamics}, it is possible to get that
\[[-Δ_ω,∇_ω]=2ωΔ_ω-(n-3)∇_ω.\]
This directly gives the estimate~\eqref{estimate-commutator} when~$s=1$.
We obtain the estimate when~$s$ is an integer with the formula~$[L^{p+1},∇_ω]=\sum_{q=0}^p L^{p-q}[L,∇_ω]L^q$.

The proof in the general case relies on a resolvent formula for the operator~$A^θ$, when~$θ$ belongs to~$(0,1)$, and~$A:\mathcal D(A)⊂ H→ H$ is a strictly positive operator of a Hilbert space~$H$ with a complete basis of eigenvectors (see~\cite[Remark~V-$3.50$]{kato1995perturbation}):
\[A^θ=\frac{\sin{\piθ}}{π}∫_0^∞ t^θ(t^{-1}-(t+A)^{-1})\d t.\]
This formula can be checked on an orthonormal basis of eigenvectors of~$A$, and relies on the fact that, for~$λ>0$, we have
\[∫_0^∞ t^θ\big(\frac1t-\frac1{t+λ}\big)\d t=λ^θ∫_0^∞\frac{t^{θ-1}\d t}{1+t}.\]
The fact that this last integral is equal to~$\frac{π}{\sin{\piθ}}$ for~$0<θ<1$ is classical, and can be done by the method of residues.

We then have, for another operator~$B$ (with dense domain for~$(t+A)^{-1}B$ and~$B(t+A)^{-1}$ for~$t>0$) 
\begin{align*}
[A^θ,B]&=\frac{\sin{\piθ}}{π}∫_0^∞ t^θ[B,(t+A)^{-1}]\d t\\
&=\frac{\sin{\piθ}}{π}∫_0^∞ t^θ(t+A)^{-1}[A,B](t+A)^{-1}\d t.
\end{align*}
We can apply this result to~$A=L^m$ with~$H=\dot{L}^2_M$,~$B=e·∇_ω$ for a fixed~$e∈𝕊$, and~$θ=\frac{s}m$ for~$0<s<m$ and we get, using the fact that~$(t+L^m)^{-1}$ is self-adjoint in~$H$ (and bounded, so all smooth functions are in the domain of~$(t+A)^{-1}B$ and~$B(t+A)^{-1}$),
\begin{align*}
|⟨ g\,[L^s,e·∇_ω]g⟩_M|&⩽\frac{\sin{\piθ}}{π}∫_0^∞ t^θ|⟨ g\,(t+L^m)^{-1}[L^m,e·∇_ω](t+L^m)^{-1}g⟩_M|\d t\\
&⩽\frac{\sin{\piθ}}{π}∫_0^∞ t^θ|⟨(t+L^m)^{-1}g\,[L^m,e·∇_ω](t+L^m)^{-1}g⟩_M|\d t\\
&⩽ C_m\frac{\sin{\piθ}}{π}∫_0^∞ t^θ⟨(t+L^m)^{-1}g,L^m(t+L^m)^{-1}g⟩_M\d t\\
&⩽ C_m\frac{\sin{\piθ}}{π}∫_0^∞ t^θ⟨ g,(t+L^m)^{-1}L^m(t+L^m)^{-1}g⟩_M\d t.
\end{align*}
But as before, it is easy to see that
\[∫_0^∞ t^θ\frac{λ}{(t+λ)^2}\d t=λ^θ∫_0^∞\frac{t^{θ}\d t}{(1+t)^2}=θλ^θ\frac{π}{\sin{\piθ}},\]
and then
\[θ A^θ=\frac{\sin{\piθ}}{π}∫_0^∞ t^θ(t+A)^{-1}A(t+A)^{-1}\d t.\]
Finally, we get
\[|⟨ g\,[L^s,e·∇_ω]g⟩_M|⩽ C_m\frac{s}{m}⟨ g,L^s g⟩_M,\]
wich ends the proof of Lemma~\ref{lemma-modified-Hs}
\end{proof}

\section{Phase transitions}
\label{sec-phase-transition}

\subsection{Application of the previous theory to two special cases}
\label{subsec-appli}

In the previous section, we have stated results which are valid for all possible behaviors of the function~$κ ↦ \frac{j(κ)}{c(κ)}$.
In particular, the number of branches of equilibria can be arbitrary.

In this section, we apply the previous theory to two typical examples: 
\begin{itemize}
\item[(i)] The function~$κ ↦ \frac{j(κ)}{c(κ)}$ is increasing.
In this case, there exists only one branch of stable von Mises--Fisher equilibria.
The uniform equilibria are stable for~${ρ<ρ_c}$, where~$ρ_c=\lim_{κ → 0} \frac{j(κ)}{c(κ)}$, and become unstable for~$ρ>ρ_c$.
The von Mises--Fisher equilibria only exist for~$ρ>ρ_c$ and are stable.
This corresponds to a second-order phase transition.
We will provide details and a determination of the critical exponent of this phase transition in section~\ref{subsec-second}.
\item[(ii)] The function~$κ ↦ \frac{j(κ)}{c(κ)}$ is unimodal, i.e.
there exists~$κ_*$ such that this function is decreasing on~$[0,κ_*]$ and increasing on~$[κ_*, ∞)$.
Then, another critical density is defined by~$ρ_*=\frac{j(κ_*)}{c(κ_*)}$.
Then we have the following situation: 
\begin{itemize}
\item[a.] if~$ρ ∈ (ρ_*, ρ_c)$, there exist two branches of von Mises--Fisher equilibria, and therefore, three types of equilibria if we include the uniform distribution.
Both the uniform distribution and the von Mises--Fisher distribution with the largest~$κ$ are stable while the von Mises--Fisher distribution with intermediate~$κ$ is unstable.
\item[b.] if~$ρ<ρ_*$, there exists only one equilibrium, the uniform one, which is stable.
\item[c.] if~$ρ>ρ_c$, there exist two types of equilibria, the uniform equilibrium which is unstable and the von Mises--Fisher equilibria which are stable.
\end{itemize}
This situation corresponds to a first-order phase transition and is depicted in section~\ref{subsec-first}, where phase diagrams for both the two-dimensional and three-dimensional cases are given.
The major feature of first-order phase transitions is the phenomenon of hysteresis, which will be illustrated by numerical simulations in section~\ref{subsec-first}.
\end{itemize}
For references to phase transitions, we refer the reader to~\cite{goldenfeld2005lectures}.

\subsection{Second order phase transition}
\label{subsec-second}

Let us now focus on the case where we always have~$(\frac{j}{c})'>0$ for all~$κ>0$ (this corresponds for example to the upper two curves of Figure~\ref{fig-shapes}).
In this case, the compatibility equation~\eqref{eq-compat2} has a unique positive solution for~$ρ>ρ_c$.
With the results of the previous subsection about stability and rates of convergence, we obtain the behavior of the solution.
\begin{proposition}
\label{prop-second-order}
Let~$f_0$ be an initial condition with mass~$ρ$, and~$f$ the corresponding solution of~\eqref{KFP-homogeneous}.
We suppose that~$(\frac{j}{c})'>0$ for all~$κ>0$.
Then:
\begin{itemize}
\item[(i)] If~$ρ<ρ_c$, then~$f$ converges exponentially fast towards the uniform distribution~$f_∞=ρ$.
\item[(ii)] If~$ρ=ρ_c$, then~$f$ converges to the uniform distribution~$f_∞=ρ$.
\item[(iii)] If~$ρ>ρ_c$ and~$J_{f_0}≠0$, then there exists~$Ω_∞$ such that~$f$ converges exponentially fast to the von Mises--Fisher distribution~$f_∞=ρ M_{κΩ_∞}$, where~$κ>0$ is the unique positive solution to the equation~$ρ c(κ)=j(κ)$.
\end{itemize}
\end{proposition}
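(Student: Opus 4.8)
The plan is to assemble the results already established; the statement is a case analysis tied together by LaSalle's principle (Proposition~\ref{prop-lasalle-refined}) and the local stability and instability results. Under the standing hypothesis $(\frac{j}{c})'>0$ on $(0,\kappa_{max})$, together with $\lim_{\kappa\to\kappa_{max}}\frac{j(\kappa)}{c(\kappa)}=+\infty$ from~\eqref{lim-j-c-inf}, the map $\kappa\mapsto\frac{j(\kappa)}{c(\kappa)}$ is an increasing bijection from $(0,\kappa_{max})$ onto $(\rho_c,+\infty)$; in particular $\rho_*=\rho_c$, and the compatibility equation~\eqref{eq-compat2} has no positive root for $\rho\leqslant\rho_c$ and exactly one positive root $\kappa$ for $\rho>\rho_c$ (Proposition~\ref{prop-two-thresholds}). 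In every case the solution set $\{\kappa\geqslant 0:\rho c(\kappa)=j(\kappa)\}$ is finite, hence contains no open interval, so Proposition~\ref{prop-lasalle-refined} applies: there is a solution $\kappa_\infty$ of~\eqref{eq-compatibility} with~\eqref{eq-limJ} and~\eqref{eq-limf} holding.

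For~(i) and~(ii), when $\rho\leqslant\rho_c$ the only solution is $\kappa_\infty=0$, so~\eqref{eq-limf} already gives $f(t)\to\rho$ in every $H^s$. If moreover $\rho<\rho_c$, I would choose a time $T$ large enough that $\|f(T)-\rho\|_{H^s}<\delta$, where $\delta$ is the ($f_0$-independent) radius of Proposition~\ref{prop-stability-uniform}, and apply that proposition on $[T,\infty)$, which is legitimate by the uniqueness statement of Theorem~\ref{theorem-existence-uniqueness}; this upgrades the convergence to the exponential rate $(n-1)\tau_0(1-\rho/\rho_c)$ on the tail, and the trivial bound on $\|f(t)-\rho\|_{H^s}$ over the compact interval $[0,T]$ turns this into genuine exponential decay. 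For $\rho=\rho_c$ that rate degenerates to $0$ and no quantitative estimate is expected; only plain convergence is claimed, and LaSalle has already supplied it.

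For~(iii), suppose $\rho>\rho_c$ and $J_{f_0}\neq 0$. Since~\eqref{ODE-J-reduced} has the form $\frac{\d}{\d t}J_f=M(t)J_f$ with $M$ continuous in time, uniqueness for this linear ODE (including backward in time) gives $J_{f(t)}\neq 0$ for all $t$, so $\Omega_{f(t)}$ is well defined throughout. Proposition~\ref{prop-unstability-uniform} then rules out $\kappa_\infty=0$, forcing $\kappa_\infty$ to be the unique positive root $\kappa$ of $\rho c(\kappa)=j(\kappa)$, and~\eqref{eq-limf} gives $\|f(t)-\rho M_{\kappa\Omega_{f(t)}}\|_{H^s}\to 0$. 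As $(\frac{j}{c})'(\kappa)>0$, Theorem~\ref{thm-strong-stability-anisotropic} applies: with $\delta$ its ($f_0$-independent) radius, I would pick $T$ with $\|f(T)-\rho M_{\kappa\Omega_{f(T)}}\|_{H^s}<\delta$ and run the theorem from time $T$ with $\Omega=\Omega_{f(T)}$, obtaining $\Omega_\infty\in\mathbb{S}$ and the exponential convergence of $f$ towards $\rho M_{\kappa\Omega_\infty}$ with rate $\lambda$ as in~\eqref{def-lambda} (again extending the tail estimate over the compact interval $[0,T]$).

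The only genuinely delicate point, and the main obstacle, is passing from the qualitative convergence furnished by LaSalle's principle to the quantitative exponential estimates: one must verify that the trajectory enters, after some finite time $T$, the basins of attraction quantified by Proposition~\ref{prop-stability-uniform} and Theorem~\ref{thm-strong-stability-anisotropic}, that the radii and constants there are independent of $f_0$ (they are, by construction), and that any auxiliary boundedness hypothesis is satisfied at time $T$ — which is guaranteed by the uniform-in-time $H^s$ bounds of Theorem~\ref{theorem-existence-uniqueness} — so that restarting the evolution at time $T$ and concatenating the estimates is valid.
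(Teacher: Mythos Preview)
Your proof is correct and follows exactly the approach of the paper, which simply states that the result is a direct application of Propositions~\ref{prop-lasalle-refined}--\ref{prop-stability-uniform} and Theorem~\ref{thm-strong-stability-anisotropic}. Your write-up merely makes explicit the details of how these results are combined (the structure of the roots of the compatibility equation under the monotonicity hypothesis, the choice of a time~$T$ at which the trajectory enters the basin of attraction, and the restart argument), all of which are implicit in the paper's one-line proof.
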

\begin{proof}
This is a direct application of Propositions~\ref{prop-lasalle-refined}-\ref{prop-stability-uniform} and Theorem~\ref{thm-strong-stability-anisotropic}.
\end{proof}
\begin{remark}~
\begin{itemize}
\item[(i)] When~$ρ>ρ_c$, the special case where~$J_{f_0}=0$ leads to the study of heat equation~$∂_t f=τ_0Δ_ω f$.
Its solution converges exponentially fast to the uniform distribution, but this solution is not stable under small perturbation of the initial condition.
\item[(ii)] For some particular choice of the coefficients, as in~\cite{frouvelle2012dynamics}, it is also possible to get a polynomial rate of convergence in the second case~$ρ=ρ_c$.
For example when~$j(κ)=κ$, we have~${∥f-ρ∥⩽ Ct^{-\frac12}}$ for~$t$ sufficiently large.
\end{itemize}
\end{remark}
We now describe the phase transition phenomena by studying the order parameter of the asymptotic equilibrium~$α=\frac{|J_{f_∞}|}{ρ}$, as a function of the initial density~$ρ$.

We have~$α(ρ)=0$ if~$ρ⩽ρ_c$, and~$α(ρ)=c(κ)$ for~$ρ>ρ_c$, where~$κ>0$ is the unique positive solution to the equation~$ρ c(κ)=j(κ)$.
This is a positive continuous increasing function for~$ρ>ρ_c$.
This is usually described as a continuous phase transition, also called second order phase transition.

\begin{definition}
\label{def-critical-exponent}
We say that~$β$ is the critical exponent of the phase transition if there exists~$α_0>0$ such that
\[α(ρ)\sim α_0(ρ-ρ_c)^β,\text{ as }ρ\overset{>}{→}ρ_c.\]
\end{definition}

This critical exponent~$β$ can take arbitrary values in~$(0,1]$, as can be seen by taking~$k$ such that~$j(κ)=c(κ)(1+κ^{\frac1{β}})$ (we recall that~$k$ is the inverse function of~$j$, see Hypothesis~\ref{hyp-increasing}).
Indeed in this case, the function~$k$ is well defined (its inverse~$j$ is increasing), and satisfies Hypothesis~\ref{hyp-differentiable} (if~${β⩽1}$).
We then have~$(\frac{j}{c})'=\frac1{β}κ^{\frac1{β}-1}>0$, and the conclusions of Proposition~\ref{prop-second-order} apply, with~${ρ_c=1}$.
Finally, the compatibility equation~$ρc(κ)=j(κ)$ becomes~$ρ=(1+κ^{\frac1{β}})$, i.e.~${κ=(ρ-1)^β}$.
And since~$c(κ)\sim\frac1n κ$ when~$κ→0$, we get:
\[α(ρ)=c\big((ρ-1)^β\big)\sim\frac1{n}(ρ-1)^β\text{ as }ρ\overset{>}{→}1.\]
More generally, we can give the expression of the critical exponent in terms of the expansion of~$k$ in the neighborhood of~$0$.
\begin{proposition}
\label{prop-critical-exponent}
We suppose, as in Proposition~\ref{prop-second-order}, that~$(\frac{j}{c})'>0$ for all~$κ>0$.
We assume an expansion of~$k$ is given under the following form:
\begin{equation}
\label{expansion-k1}
\frac{k(|J|)}{|J|}=\frac{n}{ρ_c}-a|J|^q+o(|J|^q)\text{ as }|J|→0,
\end{equation}
with~$q⩾1$ (see Hypothesis~\ref{hyp-differentiable}) and~$a∈ℝ$.
\begin{itemize}
\item[(i)] If~$q<2$ and~$a≠0$, then~$a>0$ and we have a critical exponent given by~$β=\frac1q$.
\item[(ii)] If~$q>2$, the critical exponent is given by~$β=\frac12$.
\item[(iii)] If~$q=2$ and~$a≠-\frac{n^2}{ρ_c^3(n+2)}$, then~$a>-\frac{n^2}{ρ_c^3(n+2)}$ and the critical exponent is given by~$β=\frac12$.
In the special case where
\[\frac{k(|J|)}{|J|}=\frac{n}{ρ_c}+\frac{n^2}{ρ_c^3(n+2)}|J|^2 - a_2|J|^p+o(|J|^p)\text{ as }|J|→0,\]
with~$2<p<4$ and~$a_2≠0$, then~$a_2>0$ and we have a critical exponent given by~$β=\frac1p$.
\end{itemize}
\end{proposition}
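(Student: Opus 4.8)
The plan is to reparametrize everything by the equilibrium current $|J| := |J_{f_\infty}|$ and reduce the statement to an elementary Taylor expansion. By Proposition~\ref{prop-second-order}, for $\rho>\rho_c$ the (generic) asymptotic state is $\rho M_{\kappa\Omega_\infty}$ with $\kappa>0$ the unique root of $\rho c(\kappa)=j(\kappa)$; hence $|J|=j(\kappa)=\rho c(\kappa)$ and $\alpha(\rho)=|J|/\rho=c(\kappa)$. Setting $\kappa=k(|J|)$, the compatibility relation becomes $\rho=|J|/c(k(|J|))$. Because $(\tfrac jc)'>0$, the map $\kappa\mapsto\rho$, hence $|J|\mapsto\rho$, is strictly increasing with $\rho\to\rho_c$ as $|J|\to0^+$ (by~\eqref{def-rho-c}); so it is a continuous increasing bijection between a right-neighborhood of $0$ and a right-neighborhood of $\rho_c$, and it suffices to expand $\rho-\rho_c=|J|/c(k(|J|))-\rho_c$ in powers of $|J|$, invert the relation, and read off $\beta$ from $\alpha(\rho)=|J|/\rho\sim|J|/\rho_c$.

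The second ingredient is the small-$\kappa$ expansion of $c$. Writing $c$ as the logarithmic derivative of $Z(\kappa)=\int_0^\pi e^{\kappa\cos\theta}\sin^{n-2}\theta\,\d\theta$, the Taylor coefficients of $\log Z$ are the cumulants of $\omega\cdot\Omega$ under the uniform measure, for which $\langle(\omega\cdot\Omega)^2\rangle=\tfrac1n$ and $\langle(\omega\cdot\Omega)^4\rangle=\tfrac{3}{n(n+2)}$; equivalently one may integrate the Riccati-type identity $c'(\kappa)=1-c(\kappa)^2-\tfrac{n-1}{\kappa}c(\kappa)$. Either route gives
\[c(\kappa)=\frac{\kappa}{n}-\frac{\kappa^{3}}{n^{2}(n+2)}+O(\kappa^{5}).\]
Substituting $\kappa=k(|J|)=\tfrac{n}{\rho_c}|J|-a|J|^{q+1}+o(|J|^{q+1})$ and keeping all terms up to order $|J|^{\max(3,\,q+1)}$ yields
\[c(k(|J|))=\frac{|J|}{\rho_c}-\frac{a}{n}\,|J|^{q+1}-\frac{n}{\rho_c^{3}(n+2)}\,|J|^{3}+o(|J|^{q+1})+o(|J|^{3}),\]
and dividing $|J|$ by this expression,
\[\rho-\rho_c=\frac{a\rho_c^{2}}{n}\,|J|^{q}+\frac{n}{\rho_c(n+2)}\,|J|^{2}+(\text{higher order}),\]
the relevant leading term being the one with the smaller exponent.

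The proof then concludes by comparing the two leading powers. If $q<2$, the $|J|^{q}$ term dominates; since $\rho-\rho_c>0$ for small $|J|>0$ (a consequence of $(\tfrac jc)'>0$), the coefficient $\tfrac{a\rho_c^{2}}{n}$ is positive, so $a>0$, and inverting gives $|J|\sim\bigl(\tfrac{n}{a\rho_c^{2}}\bigr)^{1/q}(\rho-\rho_c)^{1/q}$, whence $\alpha(\rho)\sim\rho_c^{-1}\bigl(\tfrac{n}{a\rho_c^{2}}\bigr)^{1/q}(\rho-\rho_c)^{1/q}$ and $\beta=1/q$. If $q>2$, the $|J|^{2}$ term dominates and one similarly gets $\beta=1/2$. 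If $q=2$, the coefficient of $|J|^{2}$ equals $\tfrac{a\rho_c^{2}}{n}+\tfrac{n}{\rho_c(n+2)}$, which is again forced to be positive, i.e.\ $a>-\tfrac{n^{2}}{\rho_c^{3}(n+2)}$, and $\beta=1/2$. Precisely at the exceptional value $a=-\tfrac{n^{2}}{\rho_c^{3}(n+2)}$ this $|J|^{2}$ coefficient vanishes — which is exactly why that value is singled out — so one reruns the expansion one order further: with $\tfrac{k(|J|)}{|J|}=\tfrac{n}{\rho_c}+\tfrac{n^{2}}{\rho_c^{3}(n+2)}|J|^{2}-a_2|J|^{p}+o(|J|^{p})$, the $|J|^{3}$ contributions to $c(k(|J|))$ also cancel, leaving $\rho-\rho_c=\tfrac{a_2\rho_c^{2}}{n}|J|^{p}+o(|J|^{p})$, so $a_2>0$ and $\beta=1/p$.

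I expect the only real difficulty to be the bookkeeping: one must carry the expansions of $c$ and of $k$ to exactly enough orders to witness the cancellation of the $|J|^{2}$ (respectively $|J|^{3}$) contributions that distinguishes the critical coefficient when $q=2$, and one must invoke the standing hypothesis $(\tfrac jc)'>0$ twice — once to make $\rho\mapsto|J|$ a legitimate increasing function near $\rho_c$, and once to fix the signs of $a$ and $a_2$.
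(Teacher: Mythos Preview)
Your proof is correct and follows essentially the same route as the paper's: both expand the compatibility relation $\rho=|J|/c(k(|J|))$ (equivalently $1/\rho=c(\kappa)/j(\kappa)$) using the small-$\kappa$ expansion $c(\kappa)/\kappa=\tfrac1n-\tfrac{\kappa^2}{n^2(n+2)}+O(\kappa^4)$ together with~\eqref{expansion-k1}, identify the dominant power, and invoke $(\tfrac jc)'>0$ to fix the sign of the leading coefficient. The only cosmetic difference is that the paper parametrizes by $\kappa$ and substitutes $j(\kappa)\sim\tfrac{\rho_c}{n}\kappa$, whereas you parametrize directly by $|J|$ and substitute $\kappa=k(|J|)$; the two computations are equivalent under the change of variable $|J|=j(\kappa)$.
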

It is also possible to give more precise conditions for a higher order expansion of~$k$ in order to have a critical exponent less than~$\frac14$, the point~(iii) of this proposition is just an example of how to get an exponent less than~$\frac12$.
We will only detail the proofs of the first two points, the last one can be done in the same way, with more computations, which are left to the reader.
\begin{proof}
We only detail the first two points, the last one is done in the same way, with more complicate computations.
We recall that~$k(j(κ))=κ$ by definition of~$j$.
So we get that~$κ\sim\frac{n j(κ)}{ρ_c}$ as~$κ→0$.
And using~\eqref{expansion-k1}, we obtain
\[\frac{κ}{j(κ)}=\frac{k(j(κ))}{j(κ)}=\frac{n}{ρ_c}-a\Big(\frac{κρ_c}{n}\Big)^q + o(κ^q).\]
Furthermore, we have~$\frac{c(κ)}{κ}=\frac1n-\frac1{n^2(n+2)}κ^2+O(κ^4)$ (see~\cite{frouvelle2012dynamics}, Remark~$3.5$) as~$κ→0$.
So we get, as~$κ→0$:
\[\frac1{ρ}=\frac{c(κ)}{j(κ)}=\frac{κ}{j(κ)}\,\frac{c(κ)}{κ}=\frac1{ρ_c}\Big(1
 -a(\tfrac{ρ_c}{n})^{q+1}\, κ^q -\tfrac1{n(n+2)}κ^2\Big) + o(κ^{\min(q,3)}).\]
So since~$κ↦\frac{c(κ)}{j(κ)}$ is decreasing, if~$q<2$ and~$a≠0$ we have~$a>0$.
In this case, we get that~$ρ=ρ_c(1+a(\tfrac{ρ_c}{n})^{q+1}\, κ^q) + o(κ^{q})$ as~$κ→0$.
Hence, as~$ρ\overset{>}{→}ρ_c$, we have~$κ\sim\frac{n^{1+\frac1q}}{a^{\frac1q}(ρ_c)^{1+\frac2q}}(ρ-ρ_c)^{\frac1q}$.
Since~$c(κ)\sim\frac{κ}n$ as~$κ→0$, we obtain~(i).

For the same reason, if~$q>2$, we get~$ρ=ρ_c(1+\tfrac1{n(n+2)}κ^2) + o(κ^{\min(q,3)})$ as~$κ→0$, and then~$κ\sim √{\frac{n(n+2)}{ρ_c}(ρ-ρ_c)}$ as~$κ→0$, which proves point~(ii).
\end{proof}

The hypothesis in Proposition~\ref{prop-second-order} is not explicit in terms of the alignment and diffusion rates~$ν$ and~$τ$.
We have a more direct criterion in terms of~$k$ which is given below (but which is more restricted in terms of the critical exponents that can be attained).

\begin{lemma}
\label{lemma-2nd-order}
If~$\frac{k(|J|)}{|J|}$ is a non-increasing function of~$|J|$, then we have~$(\frac{j}{c})'>0$ for all~$κ>0$.
In this case, the critical exponent, if it exists, can only take values in~$[\frac12,1]$.
\end{lemma}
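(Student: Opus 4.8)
The plan is to derive both statements from a single elementary property of the order parameter: that $\kappa\mapsto c(\kappa)/\kappa$ is strictly decreasing on $(0,+\infty)$, equivalently that $\psi(\kappa):=\kappa c'(\kappa)-c(\kappa)<0$ for $\kappa>0$. Since $\psi(0)=0$ and $\psi'(\kappa)=\kappa c''(\kappa)$, this reduces to the strict concavity of $c$, i.e.\ $c''<0$ on $(0,+\infty)$; writing $c=(\log Z)'$ with $Z(\kappa)=\int_0^\pi e^{\kappa\cos\theta}\sin^{n-2}\theta\,\d\theta$, one has $c''(\kappa)=\langle(\cos\theta-c)^3\rangle_{M_{\kappa\Omega}}$, the third central moment of $\cos\theta$ under the von Mises--Fisher law, which is negative for $\kappa>0$. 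Proving this sign property uniformly in $\kappa>0$ and in the dimension $n$ is the one genuinely analytic point, and I expect it to be the main obstacle; it can be obtained directly from the integral representation (or from the recursion relations satisfied by $c$), or invoked from the study of the order parameter in~\cite{degond2012macroscopic, frouvelle2012dynamics}.

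Granting this, the first assertion follows from an exact product decomposition. Setting $|J|=j(\kappa)$, so that $\kappa=k(|J|)$ by Hypothesis~\ref{hyp-increasing}, and writing $\rho(\kappa)=j(\kappa)/c(\kappa)$, one has
\[
\frac1{\rho(\kappa)}=\frac{c(\kappa)}{j(\kappa)}=\frac{c(\kappa)}{\kappa}\cdot\frac{k(|J|)}{|J|}.
\]
The map $\kappa\mapsto|J|=j(\kappa)$ is increasing, so under the hypothesis of the lemma the factor $\kappa\mapsto k(j(\kappa))/j(\kappa)$ is non-increasing, the factor $\kappa\mapsto c(\kappa)/\kappa$ is strictly decreasing, and both are positive; hence $1/\rho(\kappa)$ is strictly decreasing in $\kappa$, so $\kappa\mapsto j(\kappa)/c(\kappa)$ is strictly increasing, i.e.\ $(\tfrac jc)'>0$. (Equivalently, $(\tfrac jc)'>0\iff j'c>jc'$; since $j'(\kappa)=1/k'(|J|)$ while the hypothesis gives $k'(|J|)\leqslant k(|J|)/|J|=\kappa/|J|$, this again reduces to $\kappa c'(\kappa)<c(\kappa)$.)

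For the second assertion, observe that when $k(|J|)/|J|$ is non-increasing its limit $m_0:=\lim_{|J|\to0}k(|J|)/|J|=n/\rho_c$ is its supremum, so $m_0\in(0,+\infty)$ (in particular $\rho_c<+\infty$) and $m_0-k(|J|)/|J|\geqslant0$ for all $|J|>0$. Combining the identity above with the expansion $c(\kappa)/\kappa=\tfrac1n-\tfrac1{n^2(n+2)}\kappa^2+O(\kappa^4)$ from~\cite{frouvelle2012dynamics} one obtains, for small $\kappa>0$,
\[
\frac1{\rho_c}-\frac1{\rho(\kappa)}=\frac1n\Bigl(m_0-\frac{k(j(\kappa))}{j(\kappa)}\Bigr)+\Bigl(\frac1n-\frac{c(\kappa)}{\kappa}\Bigr)\frac{k(j(\kappa))}{j(\kappa)},
\]
a sum of two nonnegative terms, the second being $\asymp\kappa^2$. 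Since $k$ has a Lipschitz derivative with $k(0)=0$ we have $m_0-k(|J|)/|J|=O(|J|)$, and $j(\kappa)=O(\kappa)$ because $j'(0)=1/m_0<+\infty$; hence $c_1\kappa^2\leqslant\rho(\kappa)-\rho_c\leqslant c_2\kappa$ for all small $\kappa>0$, with constants $c_1,c_2>0$. Finally, if the critical exponent $\beta$ exists, then, using $\alpha(\rho)=c(\kappa(\rho))\sim\kappa(\rho)/n$ and $\kappa(\rho)\to0$ as $\rho\overset{>}{\to}\rho_c$ (the branch $\kappa\mapsto\rho(\kappa)$ being an increasing bijection near $0$ by the first part), the defining relation $\alpha(\rho)\sim\alpha_0(\rho-\rho_c)^\beta$ becomes $\rho(\kappa)-\rho_c\sim C\kappa^{1/\beta}$ with $C>0$; comparing with $c_1\kappa^2\leqslant\rho(\kappa)-\rho_c\leqslant c_2\kappa$ and letting $\kappa\to0^+$ forces $1\leqslant\tfrac1\beta\leqslant2$, i.e.\ $\beta\in[\tfrac12,1]$. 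Apart from the negativity of the third central moment of $\cos\theta$, the remaining steps are routine Taylor expansions, monotonicity bookkeeping, and a comparison of powers.
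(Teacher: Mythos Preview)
Your proof is correct, and the overall architecture matches the paper's for the first assertion: both arguments factor $j/c=(\kappa/c)\cdot(j/k(j))$ (you take the reciprocal) and use that $c(\kappa)/\kappa$ is strictly decreasing together with the hypothesis that $|J|\mapsto k(|J|)/|J|$ is non-increasing. One small remark: your phrase ``this reduces to the strict concavity of $c$'' overshoots. What you need is only $\kappa c'(\kappa)<c(\kappa)$, and while $c''<0$ certainly implies this via $\psi(0)=0$, $\psi'=\kappa c''$, the two are not equivalent; moreover the references you cite establish $(c/\kappa)'<0$ directly rather than the concavity of $c$. So you can simply invoke $(c/\kappa)'<0$ from \cite{frouvelle2012dynamics} and drop the third-central-moment detour, which removes the point you yourself flag as the main obstacle.

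For the critical-exponent bound your route genuinely differs from the paper's and is in fact more complete. The paper expands $k(j)/j=(\kappa/c)\cdot(1/\rho)=\tfrac{n}{\rho_c}+\tfrac{1}{(n+2)\rho_c}\kappa^2-na\kappa^{1/\beta}+o(\kappa^{\min(2,1/\beta)})$ and argues that $\beta<\tfrac12$ would make the positive $\kappa^2$ term dominate, contradicting non-increase; it does not explicitly treat the upper bound $\beta\leqslant1$. Your decomposition $\tfrac{1}{\rho_c}-\tfrac{1}{\rho}=\tfrac1n\bigl(m_0-\tfrac{k(j)}{j}\bigr)+\bigl(\tfrac1n-\tfrac{c}{\kappa}\bigr)\tfrac{k(j)}{j}$ into two nonnegative pieces gives the clean two-sided estimate $c_1\kappa^2\leqslant\rho(\kappa)-\rho_c\leqslant c_2\kappa$ (the upper bound using Hypothesis~\ref{hyp-differentiable}, as you note), from which both $\beta\geqslant\tfrac12$ and $\beta\leqslant1$ follow by comparing powers. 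This is a tidy improvement over the paper's argument.
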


\begin{proof}
We have that~$\frac{\d}{\d κ}(\frac{c(κ)}{κ})<0$ for~$κ>0$ (see~\cite{frouvelle2012dynamics}).
Then
\[\Big(\frac{j}{c}\Big)'=\Big(\frac{κ}{c} \frac{j}{k(j)}\Big)'=\frac{κ}{c}\Big(\frac{j}{k(j)}\Big)'+\Big(\frac{κ}{c}\Big)'\frac{j}{k(j)}<0,\]
since~$(\frac{j}{k(j)})'⩾0$ ($j$ is an increasing function of~$κ$ and~$\frac{k(|J|)}{|J|}$ is a non-increasing function of~$|J|$).
Now if we suppose that there is a critical exponent~$β$ according to Definition~\ref{def-critical-exponent}, we get, using the fact that~$α(ρ)=c(κ)\sim\frac{κ}n$ as~$κ→0$, that
$\frac1{ρ}=\frac1{ρ_c}-aκ^{\frac1{β}}+o(κ^{\frac1{β}})$ as~$κ→0$, with~$a=(ρ_c)^{-2}(nα_0)^{-\frac1{β}}$.
We then have
\begin{align*}
\frac{k(j)}{j}&=\frac{κ}c\,\frac{c}{j}=(n+\frac1{n+2}κ^2+O(κ^4))(\frac1{ρ_c}-aκ^{\frac1{β}}+o(κ^{\frac1{β}}))\\
&=\frac{n}{ρ_c} + \frac1{(n+2)ρ_c}κ^2-n aκ^{\frac1{β}}+o(κ^{\min(2,\frac1{β})}).
\end{align*}
Then~$β$ cannot be less than~$\frac12$, otherwise the function~$\frac{k(j(κ))}{j(κ)}$ could not be a nonincreasing function of~$κ$ in the neighborhood of~$0$.
\end{proof}
\begin{remark}
\label{remark-global-rate-2nd-order}
When this criterion is satisfied, we can also use the result of Remark~\ref{remark-special-entropy}.
Indeed, in that case we get easily that~$\hat{ρ}=ρ_c$, and we obtain that for any~$ρ<ρ_c$, there is a global rate of decay for the modified~$H^{-\frac{n-1}2}$ norm: for all~$f_0∈ H^{-\frac{n-1}2}(𝕊)$, we have the estimation~\eqref{eq-global-decay}.
\end{remark}

\subsection{First order phase transition and hysteresis}
\label{subsec-first}

We now turn to a specific example, where all the features presented in the stability study can be seen.
We focus on the case where~$ν(|J|)=|J|$, as in~\cite{frouvelle2012dynamics}, but we now take~$τ(|J|)=1/(1+|J|)$.
From the modeling point of view, this can be related to the Vicsek model with vectorial noise (also called extrinsic noise)~\cite{aldana2009emergence,chate2008collective}, since in that case the intensity of the effective noise is decreasing when the neighbors are well aligned.

In this case, we have~$k(|J|)=|J|+|J|^2$, so the assumptions of Lemma~\ref{lemma-2nd-order} are not fulfilled, and the function~$j$ is given by~$j(κ)=\frac12(√{1+4κ}-1)$.

Expanding~$\frac{j}{c}$ when~$κ$ is large or~$κ$ is close to~$0$, we get
\[\frac{j}{c}=\begin{cases}
n\,(1-κ)+O(κ^2)&\text{ as }κ→0,\\
\sqrt{κ}+O(1)&\text{ as }κ→∞.
\end{cases}\]
Consequently, there exist more than one family of non-isotropic equilibria when~$ρ$ is close to~$ρ_c=n$ (and~$ρ<ρ_c$).

The function~$κ↦\frac{j(κ)}{c(κ)}$ can be computed numerically.
The results are displayed in Figure~\ref{fig-2d-3d-hysteresis} in dimensions~$n=2$ and~$n=3$.
\begin{figure}[h]
\centering
\input{rho_kappa_hysteresis.pspdftex}
\caption{The function~$κ↦\frac{j(κ)}{c(κ)}$, in dimensions~$2$ and~$3$.}
\label{fig-2d-3d-hysteresis}
\end{figure}

We observe the following features:
\begin{itemize}
\item There exists a unique critical point~$κ_*$ for the function~$\frac{j}{c}$, corresponding to its global minimum~$ρ_*$ (we obtain numerically~$ρ_*≈ 1.3726$ and~$κ_*≈ 1.2619$ if~$n=2$, and~$ρ_*≈ 1.8602$ and~${κ_*≈ 1.9014}$ if~$n=3$).
\item The function~$\frac{j}{c}$ is strictly decreasing in~$[0,κ_*)$ and strictly increasing in~$(κ_*,∞)$.
\end{itemize}
We conjecture that this is the exact behavior of the function~$\frac{j}{c}$, called unimodality.
From these properties, it follows that the solution associated to an initial condition~$f_0$ with mass~$ρ$ can exhibit different types of behavior, depending on the three following regimes for~$ρ$.
\begin{proposition}
\label{prop-hysteresis}
We assume that the function~$\frac{j}{c}$ is unimodal, as described above.
Then we have the following hysteresis phenomenon:
\begin{itemize}
\item[(i)] If~$ρ<ρ_*$, then the solution converges exponentially fast to the uniform equilibrium~$f_∞=ρ$.
\item[(ii)] If~$ρ_*<ρ<n$, there are two families of stable solutions: either the uniform equilibrium~$f=ρ$ or the von Mises--Fisher distributions of the form~$ρ M_{κΩ}$ where~$κ$ is the unique solution with~$κ>κ_*$ of the compatibility equation~\eqref{eq-compatibility} and~$Ω∈𝕊$.
If~$f_0$ is sufficiently close to one of these equilibria, there is exponential convergence to an equilibrium of the same family.

The von Mises--Fisher distributions of the other family (corresponding to solution of~\eqref{eq-compatibility} such that~$0<κ<κ_*$) are unstable in the sense given in Proposition~\ref{prop-unstability-stability-anisotropic}.
\item[(iii)] If~$ρ>n$ and~$J_{f_0}≠0$, then there exists~$Ω_∞∈𝕊$ such that~$f$ converges exponentially fast to the von Mises--Fisher distribution~$ρ M_{κΩ_∞}$, where~$κ$ is the unique positive solution to the compatibility equation~$ρ c(κ)=j(κ)$.
\end{itemize}
\end{proposition}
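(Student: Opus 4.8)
The plan is to treat Proposition~\ref{prop-hysteresis} as essentially a bookkeeping corollary of the stability and convergence results of Section~\ref{sec-rates-convergence}; the only genuinely new ingredient is the unimodality assumption, which fixes the combinatorics of the roots of the compatibility equation~\eqref{eq-compatibility} and the sign of $(\frac{j}{c})'$ at each of them. So the first step is to read off that combinatorial picture, and the second is to invoke the relevant local/global results case by case.

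Under unimodality, $κ↦\frac{j(κ)}{c(κ)}$ is strictly decreasing on $(0,κ_*]$, from $\lim_{κ→0}\frac{j}{c}=ρ_c=n$ down to $ρ_*=\frac{j(κ_*)}{c(κ_*)}$, and strictly increasing on $[κ_*,∞)$, from $ρ_*$ up to $+∞$ by~\eqref{lim-j-c-inf}. Consequently: if $ρ<ρ_*$ the compatibility equation~\eqref{eq-compat2} has $κ=0$ as its only solution; if $ρ_*<ρ<n$ it has exactly two positive roots $κ_1∈(0,κ_*)$ and $κ_2∈(κ_*,∞)$, with $(\frac{j}{c})'(κ_1)<0<(\frac{j}{c})'(κ_2)$; and if $ρ>n$ it has a single positive root $κ∈(κ_*,∞)$ with $(\frac{j}{c})'(κ)>0$. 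In every case the level set $\{κ:ρc(κ)=j(κ)\}$ is finite, hence contains no open interval, so Proposition~\ref{prop-lasalle-refined} applies.

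Then the three points follow by invoking the appropriate earlier results. For~(i): Proposition~\ref{prop-lasalle-refined} forces $κ_∞=0$, hence $∥f(t)-ρ∥_{H^s}→0$ for every $s$; since $ρ<ρ_*⩽ρ_c$, Proposition~\ref{prop-stability-uniform} provides a radius $δ>0$ independent of $f_0$ together with the rate $λ=(n-1)τ_0(1-ρ/ρ_c)$, so picking $T$ with $∥f(T)-ρ∥_{H^s}<δ$ and restarting the dynamics from time $T$ gives the exponential convergence. For~(ii): the uniform state is locally stable with an exponential rate by Proposition~\ref{prop-stability-uniform} (since $ρ<ρ_c$); the family $\{ρM_{κ_2Ω}\}$ is stable, with exponential convergence to a member of the family, by Theorem~\ref{thm-strong-stability-anisotropic} (since $(\frac{j}{c})'(κ_2)>0$); and the family $\{ρM_{κ_1Ω}\}$ is unstable in the sense of point~(i) of Proposition~\ref{prop-unstability-stability-anisotropic} (since $(\frac{j}{c})'(κ_1)<0$) — this is exactly the content of~(ii). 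For~(iii): Proposition~\ref{prop-lasalle-refined} gives $κ_∞∈\{0,κ\}$, and Proposition~\ref{prop-unstability-uniform} excludes $κ_∞=0$ because $ρ>ρ_c$ and $J_{f_0}≠0$, so $κ_∞=κ$ and $∥f(t)-ρM_{κΩ_{f(t)}}∥_{H^s}→0$; using the instantaneous $H^s$-regularization of Theorem~\ref{theorem-existence-uniqueness}, one chooses $T$ large enough that this quantity is below the radius $δ$ of Theorem~\ref{thm-strong-stability-anisotropic}, applies that theorem from time $T$ with initial datum $f(T)$ (which has mass $ρ$), and obtains $Ω_∞∈𝕊$ together with the exponential bound at rate $λ$ given by~\eqref{def-lambda}.

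The only non-mechanical point — the main obstacle — is that Proposition~\ref{prop-stability-uniform} and Theorem~\ref{thm-strong-stability-anisotropic} are purely \emph{local}, whereas~(i) and~(iii) demand that \emph{every} admissible solution converge, and do so exponentially. This is handled by the two-step argument above: LaSalle's principle, in the refined form of Proposition~\ref{prop-lasalle-refined}, yields global convergence \emph{to the relevant set of equilibria}, which eventually places the solution inside the basin of attraction of the local result; since the radius $δ$ and the constants $C$, $λ$ there do not depend on the initial datum, restarting the autonomous dynamics at that later time transfers the exponential rate back to the original solution. It is worth stressing that the entire statement is conditional on the unimodality of $\frac{j}{c}$, which is only verified numerically for this example; the proof itself uses nothing beyond that hypothesis.
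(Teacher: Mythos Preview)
Your proof is correct and follows exactly the approach the paper intends: the paper's own proof is the single sentence ``Again, it is a direct application of Propositions~\ref{prop-lasalle-refined}--\ref{prop-stability-uniform} and Theorem~\ref{thm-strong-stability-anisotropic}'', and you have simply unpacked what that direct application consists of, including the two-step LaSalle-then-local argument needed to pass from global convergence to an exponential rate in cases~(i) and~(iii).
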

\begin{proof}
Again, it is a direct application of Propositions~\ref{prop-lasalle-refined}-\ref{prop-stability-uniform} and Theorem~\ref{thm-strong-stability-anisotropic}.
\end{proof}

\begin{remark}~
\begin{itemize}
\item[(i)] At the critical point~$ρ=ρ_*$, the uniform equilibrium is stable (and for any initial condition sufficiently close to it, the solution converges exponentially fast to it), but the stability of the family of von Mises--Fisher distributions~$ρ_*M_{κ_*Ω}$, for~$Ω∈𝕊$, is unknown.
\item[(ii)] At the critical point~$ρ=n$, the family of von Mises--Fisher distributions of the form~$n M_{κ_cΩ}$, for~$Ω∈𝕊$ and where~$κ_c$ is the unique positive solution of~\eqref{eq-compatibility}, is stable.
For any initial condition sufficiently close to~$n M_{κ_cΩ}$ for some~$Ω∈𝕊$, there exists~$Ω_∞$ such that the solution converges exponentially fast to~$n M_{κ_cΩ_∞}$.
However, in this case, the stability of the uniform distribution~$f=n$ is unknown.
\item[(iii)] As previously, in the special case~$J_{f_0}=0$, the equation reduces to the heat equation and the solution converges to the uniform equilibrium.
\end{itemize}
\end{remark}

The order parameter~$c_1$ as a function of~$ρ$ (i.e.~$c_1(ρ)=c(κ)$ with~$ρ=\frac{j(κ)}{c(κ)}$) is depicted in Figure~\ref{fig-phase-diagram} for dimension~$2$ or~$3$.
The dashed lines corresponds to branches of equilibria which are unstable.

\begin{figure}[h]
\centering
\input{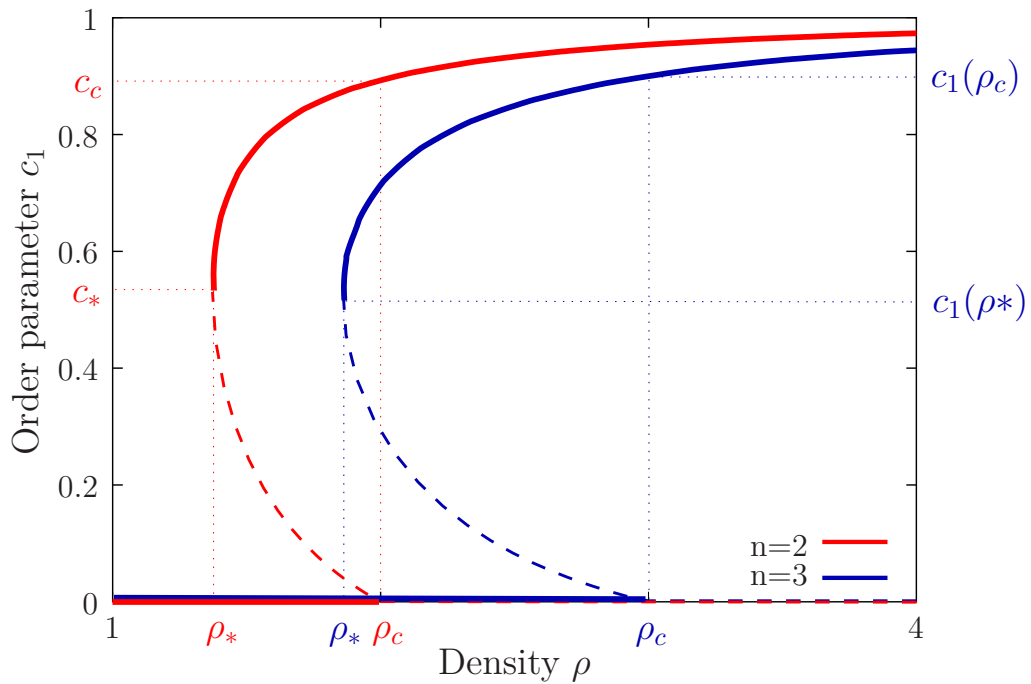}
\caption{Phase diagram of the model with hysteresis, in dimensions~$2$ and~$3$.}
\label{fig-phase-diagram}
\end{figure}

The hysteresis phenomenon can be described by the hysteresis loop.
If the parameter~$ρ$ starts from a value less than~$ρ_*$, and increases slowly, the only stable distribution is initially the uniform distribution and it remains stable, until~$ρ$ reaches the critical value~$ρ_c$.
For~$ρ>ρ_c$, the only stable equilibria are the von Mises--Fisher distributions.
The order parameter then jumps from~$0$ to~$c_1(ρ_c)$.
If then the density~$ρ$ is further decreased slowly, the von Mises--Fisher distributions are stable until~$ρ$ reaches~$ρ_*$ back.
For~$ρ<ρ_*$, the only stable equilibrium is the uniform distribution, and the order parameter jumps from~$c_1(ρ_*)$ to~$0$.
The order parameter spans an oriented loop called hysteresis loop.

This hysteresis loop can be observed numerically at the kinetic level or at the particle level.
The plots of the order parameter for such numerical simulations are given by Figures~\ref{fig-numerics-kinetic-2d} and~\ref{fig-numerics-particle-2d}.
The details of the numerical simulations are provided in appendix~\ref{sec-numeric}. The key point to be able to perform these numerical simulations while varying the parameter~$ρ$ in time is to rescale the equation in order to see the parameter~$ρ$ as a coefficient of this new equation, and not anymore as the mass of the initial condition (normalized to be a probability measure).

\begin{figure}[p]
\centering
\input{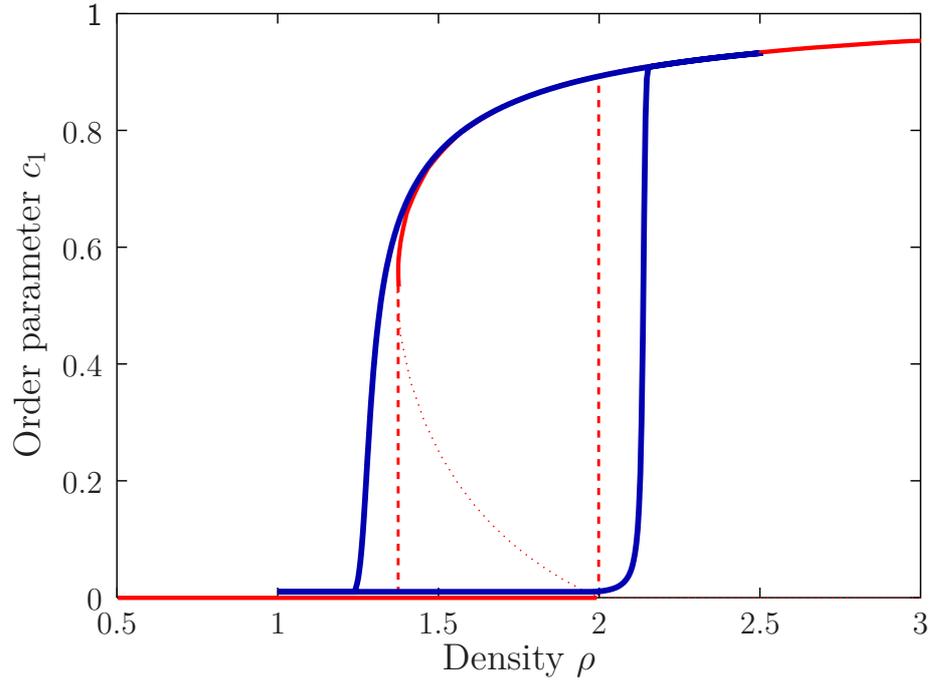}
\caption{Hysteresis loop for the order parameter~$c_1$ in a numerical simulation of the homogeneous kinetic equation with time varying~$ρ$ (see~\eqref{eq-KFP-rho}), in dimension~$2$.
The red curve is the theoretical curve, the blue one corresponds to the simulation.}
\label{fig-numerics-kinetic-2d}
\end{figure}

\begin{figure}[p]
\centering
\input{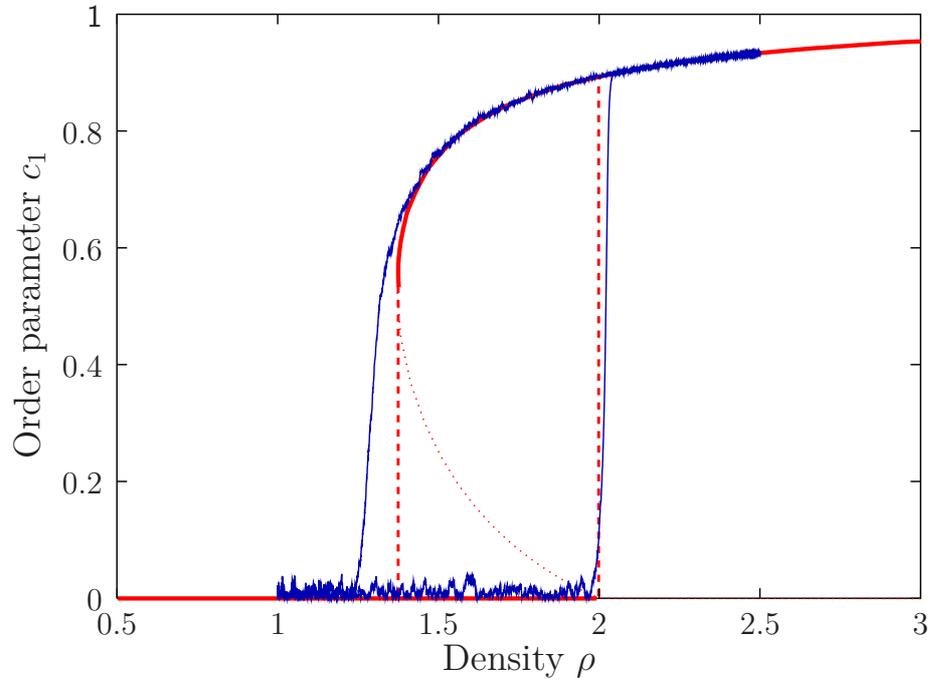}
\caption{Hysteresis loop for the order parameter~$c_1$ in a numerical simulation of the homogeneous particle model with varying~$ρ$ (see~\eqref{particle-hom-domega}-\eqref{particle-hom-Omega-J}), in dimension~$2$.}
\label{fig-numerics-particle-2d}
\end{figure}

We can also obtain the theoretical diagrams for the free energy and the rates of convergences.
For this particular example, the free energies~$\mathcal F(ρ)$ and~$\mathcal F_κ$ (corresponding respectively to the uniform distribution and to the von Mises--Fisher distribution~$ρ M_{κΩ}$ for a positive solution~$κ$ of the compatibility equation~\eqref{eq-compatibility}) are given by
\begin{gather*}
\mathcal F(ρ)=ρ\ln ρ,\\
\begin{split}
\mathcal F_κ&=ρ\ln ρ + ⟨ρ\ln M_{κΩ}⟩_M - \frac12j^2-\frac13j^3\\
&=ρ\ln ρ  - ρ \ln ∫ e^{κ\cosθ}\d ω -\frac16(κ-j)+\frac23{jκ}.
\end{split}
\end{gather*}
The plots of these functions are depicted in dimensions~$2$ and~$3$ in the left plot of Figure~\ref{fig-free-energy}.
Since the functions are very close in some range, we magnify the difference~$\mathcal F_κ-\mathcal F(ρ)$ in the right plot of Figure~\ref{fig-free-energy}.
The dashed lines correspond to unstable branches of equilibria.

\begin{figure}[h]
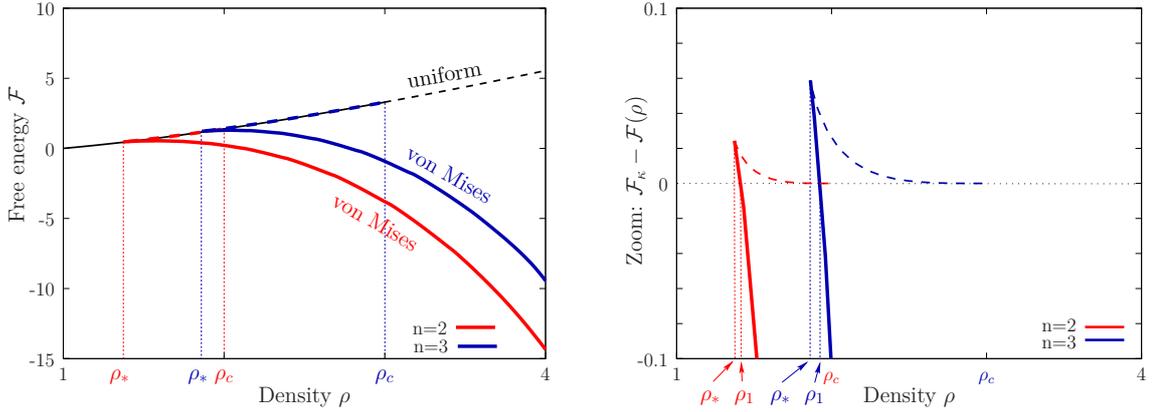

\centering
\scalebox{.6}{\input{f_rho_hysteresis.pspdftex}}\hfill\scalebox{.6}{\input{diff_f_rho_hysteresis.pspdftex}}
\caption{Free energy levels of the different equilibria (left), and difference of free energies of anisotropic and uniform equilibria (right), as functions of the density, in dimensions~$2$ and~$3$. The dashed lines in the right picture corresponds to unstable equilibria. At the density~$ρ_1$, the free energies of the stable anisotropic and the uniform equilibria are the same.}
\label{fig-free-energy}
\end{figure}

We observe that the free energy of the unstable non-isotropic equilibria (in dashed line) is always greater than the one of the uniform distribution.
There exist~$ρ_1∈(ρ_*,ρ_c)$ and a corresponding solution~$κ_1$ of the compatibility solution~\eqref{eq-compatibility} (with~$κ_1>κ_*$, corresponding to a stable family of non-isotropic equilibria) such that~$\mathcal F_{κ_1}=\mathcal F(ρ_1)$.
If~$ρ<ρ_1$, the global minimizer of the free energy is the uniform distribution, while if~$ρ>ρ_1$, then the global minimum is reached for the family of stable von Mises--Fisher equilibria.
However, there is no easy way to assess the value of~$ρ_1$ numerically.
We observe that the stable von Mises--Fisher distribution has larger free energy than the uniform distribution if~$ρ<ρ_1$ and therefore consists of a metastable state.
On the contrary, the uniform distribution has larger free energy than the stable von Mises--Fisher distributions if~$ρ>ρ_1$ and now, consists of a metastable state.

The rates of convergence to the stable equilibria, following Proposition~\ref{prop-stability-uniform} and Theorem~\ref{thm-strong-stability-anisotropic}, are given by
\begin{gather*}
λ_0=(n-1)(1-\frac{ρ}n),\text{ for } ρ<ρ_c=n,\\
λ_κ=\frac{1}{1+j}Λ_κ\Big(1-\big(\frac1c-c-\frac{n-1}{κ}\big)j(1+2j)\Big),\text{ for } ρ>ρ_*,
\end{gather*}
where~$λ_0$ is the rate of convergence to the uniform distribution~$ρ$, and~$λ_κ$ is the rate of convergence to the stable family of von Mises--Fisher distributions~$ρ M_{κΩ}$, where~$κ$ is the unique solution of the compatibility condition~\eqref{eq-compatibility} such that~$κ>κ_*$.
Details for the numerical computation of the Poincaré constant~$Λ_κ$ are given in the appendix of~\cite{degond2012macroscopic}.
The computations in dimensions~$2$ and~$3$ are depicted in Figure~\ref{fig-rates}.
We observe that the rate of convergence to a given equilibrium is close to zero when~$ρ$ is close to the corresponding threshold for the stability of this equilibrium, and large when~$ρ$ is far from this threshold.
Moreover, the rate~$λ_κ$ of convergence to a von Mises--Fisher distribution is unbounded as~$ρ→∞$, while the rate~$λ_0$ of convergence to the uniform distribution is bounded by~$n-1$.

\begin{figure}[h]
\centering
\scalebox{.8}{\input{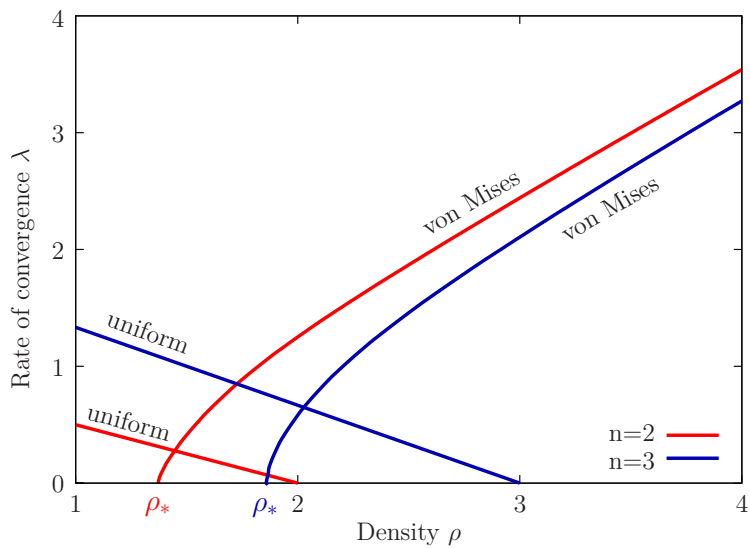}}
\caption{Rates of convergence to both types of stable equilibria, as functions of the density~$ρ$, in dimensions~$2$ and~$3$.}
\label{fig-rates}
\end{figure}

\section{Macroscopic models, hyperbolicity, and diffusivity}
\label{sec-macro-models}

We now go back to the spatially inhomogeneous system.
We want to investigate the hydrodynamic models that we can derive from the kinetic equation~\eqref{KFP-meanfield}.

\subsection{Scalings}
\label{subsec-hydro-scaling}

In order to understand the roles of the various terms, it is useful to introduce dimensionless quantities.
We set~$t_0$ the time unit and~$x_0=a\, t_0$ the space unit.
We assume that the range of the interaction kernel~$K$ is~$R$, meaning that we can write~$K(|x|) = \frac1{R^n}\widetilde K(\frac{|x|}{R})$ (we recall that~$K$ is normalized to~$1$, so we still have~$∫\widetilde K(|ξ|) \, \d ξ = 1$).
We also assume that~$\widetilde K$ has second moment of order~$1$, i.e.~$\widetilde K_2=\mathcal O(1)$, where
\begin{equation}
\label{def-K2}
\widetilde K_2=\frac1{2n}∫_{ℝ^n}\widetilde K(|ξ|)|ξ|^2 \, \d ξ.
\end{equation} 
We now introduce dimensionless variables~$\widetilde x = x/x_0$,~$\widetilde t = t/t_0$, and we make the change of variables~$\widetilde f (\widetilde x, ω, \widetilde t) = x_0^n  \, f(x_0 \widetilde x, ω, t_0 \widetilde t)$,~$\widetilde{\mathcal J}_{\widetilde{f}}(\widetilde x, \widetilde t) = x_0^n \mathcal J_f(x_0 \widetilde x, t_0 \widetilde t)/a$.
Finally, we introduce the dimensionless quantities: 
\[η = \frac{R}{x_0}, \quad  \hat{ν}(|\widetilde{\mathcal J}_{\widetilde{f}}|) = ν(|\mathcal J_f|)\,t_0, \quad \hat{τ}(|\widetilde{\mathcal J}_{\widetilde{f}}|) = τ(|\mathcal J_f|)\,t_0.
\label{eq:dimpar}\]
In this new system of coordinates, the system~\eqref{KFP-meanfield} is written as follows (we still use the notation~$J_{f}(x,t) = ∫_{𝕊}  f(x,ω,t) \,ω \, \dω$): 
\begin{equation}
\begin{cases}
\label{KFP-meanfield-dimless}
∂_t f +  ω · ∇_x f + \hat{ν}(|\mathcal J_f|) ∇_ω · (P_{ω^⊥} \bar{ω}_f \, f) = \hat{τ}(|\mathcal J_f|) Δ_ω f\\
\mathcal J_f(x,t) = (K_η* J_f)(x,t)\, \d y \, ,\quad \bar{ω}_f=\frac{\mathcal J_f}{|\mathcal J_f|} ,
\end{cases}
\end{equation}
where we have dropped the tildes for the sake of clarity, and where~$K_η$ is the rescaling of~$K$ given by
\begin{equation}
\label{def-K-eta}
K_η(x)=\frac{1}{η^n}K\big(\frac{x}{η} \big).
\end{equation}
Now, by fixing the relations between the three dimensionless quantities~\eqref{eq:dimpar}, we define the regime we are interested in.
We suppose that the diffusion and social forces are simultaneously large, while the range of the social interaction~$η$ tends to zero.
More specifically, we let~$ε ≪ 1$ be a small parameter and we assume that~$\hat{τ} = {\mathcal O}(1/ε)$ (large diffusion),~$\hat{ν} = {\mathcal O}(1/ε)$ (large social force).
In order to highlight these scaling assumptions, we define~$τ^\sharp$,~$ν^\sharp$, which are all~${\mathcal O}(1)$ and such that 
\begin{equation} \hat{τ} = \frac{1}{ε} τ^\sharp, \quad \quad \hat{ν} = \frac{1}{ε} ν^\sharp.
\label{eq:parscaled}
\end{equation}
Since~$η$ is supposed to be small, using the fact that~$K$ is isotropic, we can first get the Taylor expansion of~$\mathcal J_f$ with respect to~$η$, using~\eqref{def-K-eta} and~\eqref{def-K2}, when~$J_f$ is sufficiently smooth with respect to the space variable~$x$:
\begin{equation}
\label{expansion-mathcalJ}
\mathcal J_f= J_f + η^2 K_2 Δ_x J_f + \mathcal O(η^4).
\end{equation}
Inserting this expansion into~\eqref{KFP-meanfield-dimless}, and dropping all ``hats'' and ``sharps'', we are lead to:
\begin{equation}
\label{KFP-meanfield-eps}
ε(∂_t f +  ω · ∇_x f)  +K_2 η^2\big[∇_ω·(P_{ω^⊥} ℓ_{f} f) - m_fΔ_ω f\big]  =Q(f)+ \mathcal O(η^4),
\end{equation}
with
\begin{gather*}
Q(f)= - ν(|J_f|)∇_ω · (P_{ω^⊥} Ω_{f} \, f) + τ(|J_f|)Δ_ω f,\\
J_{f}(x,t) = ∫_{𝕊}  f(x,ω,t) \,ω \, \dω,\quad Ω_f=\frac{J_f}{|J_f|}\\
ℓ_f=\frac{ν(|J_f|)}{|J_f|} P_{Ω_f^⊥}\,Δ_x J_f + (Ω_f\cdotΔ_x J_f) \,ν'(|J_f|)Ω_f \,,\\
m_f=(Ω_f\cdotΔ_x J_f)\,τ'(|J_f|),
\end{gather*}
where the primes denote derivatives with respect to~$|J|$.
We recover the same definition of~$Q$ as in the spatially homogeneous setting~\eqref{def-Q}, and the additional terms~$ℓ_f$ and~$m_f$ do not depend on the velocity variable~$ω$ (they only depend on~$J_f$ and its Laplacian~$Δ_xJ_f$).

Our plan is now to investigate the hydrodynamic limit~$ε → 0$ in this model, within two different regimes for the range of the social interaction~$η$: firstly,~$η=\mathcal O(ε)$, and secondly,~$η=\mathcal O(√{ε})$.
We have seen in~\cite{degond2012hydrodynamic} that the second scaling allows us to retain some of the nonlocality of the social force in the macroscopic model, while the first one does not.
Indeed,~$ε$ corresponds to the characteristic distance needed by an individual to react to the social force, while~$η$ is the typical distance at which agents are able to detect their congeners.
The first scaling assumes that these two distances are of the same order of magnitude.
The second one corresponds to a large detection region compared to the reaction distance.
Which one of these two regimes is biologically relevant depends on the situation.
For instance, we can imagine that the first scaling will be more relevant in denser swarms because in such systems, far agents are concealed by closer ones.

In both cases, we will write~$f$ as~$f^ε$ to insist on the dependence on~$ε$.
The limiting behavior of the function~$f^ε$ as~$ε→0$ is supposed to be a local equilibrium for the operator~$Q$, as can be seen in~\eqref{KFP-meanfield-eps}.
Keeping in mind the results of the previous section on the spatial homogeneous version, we will assume that~$f^ε$ converges to a stable equilibrium of a given type in a given region.
Depending on the type of equilibrium (uniform distribution or von Mises--Fisher distribution), we will observe different behaviors.

\subsection{Disordered region: diffusion model}
We consider a region where~$f^ε$ converges as~$ε→0$ to a uniform equilibrium~$ρ(x,t)$ which is stable.
Therefore we must have~$ρ<ρ_c$.

We first remark that we can integrate~\eqref{KFP-meanfield-dimless} on the sphere to get the following conservation law (conservation of mass):
\begin{equation}
\label{mass-conservation-eps}
∂_tρ_{f^ε}+∇_x·J_{f^ε}=0.
\end{equation}
Therefore, if we suppose that the convergence is sufficiently strong,~$J_{f^ε}$ converges to~$0$, and we get~$∂_tρ=0$.

To obtain more precise information, we are then looking at the next order in~$ε$ in the Chapman--Enskog expansion method, in the same spirit as in the case of rarefied gas dynamics (see~\cite{degond2004macroscopic} for a review).
We obtain exactly the same model as in~\cite{degond2012macroscopic}.
We prove the following theorem:

\begin{theorem}
With both scalings~$η=\mathcal O(\sqrtε)$ and~$η=\mathcal O(ε)$, when~$ε$ tends to zero, the (formal) first order approximation to the solution of the rescaled mean-field model~\eqref{KFP-meanfield-eps} in a ``disordered region'' (where the solution locally converges to a stable uniform distribution) is given by
\begin{equation}
f^ε(x,ω,t)= ρ^ε(x,t)-ε\,\frac{n\,ω·∇_xρ^ε(x,t)}{(n-1)nτ_0\big(1-\frac{ρ^ε(x,t)}{ρ_c}\big)},
\label{eq:diff_correction}
\end{equation}
where the density~$ρ^ε$ satisfies the following diffusion equation
\begin{equation}
∂_t ρ^ε = \frac{ε}{(n-1)nτ_0}∇_x·\Big(\frac1{1-\frac{ρ^ε}{ρ_c}} \, ∇_xρ^ε\Big).
\label{diffusion_disorder}
\end{equation}
\label{thm:limit_disorder}
\end{theorem}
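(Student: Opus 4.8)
The plan is to perform a formal Chapman--Enskog expansion around a uniform local equilibrium and to close the exact mass conservation law with the first‑order correction, exactly as in the derivation of the SOH model in~\cite{degond2012macroscopic}. Write~$f^ε=f^{(0)}+ε f^{(1)}+\mathcal O(ε^2)$. The first point to settle is that, in a disordered region, all the~$η$‑dependent terms of~\eqref{KFP-meanfield-eps} drop out at the order we work. Indeed~$f^{(0)}$ is a uniform distribution~$ρ(x,t)$, so~$J_{f^{(0)}}=0$ and hence~$J_{f^ε}=\mathcal O(ε)$; since~$ℓ_{f^ε}$ and~$m_{f^ε}$ are linear in~$Δ_x J_{f^ε}$ they are~$\mathcal O(ε)$, so the bracket~$K_2η^2[∇_ω·(P_{ω^⊥}ℓ_{f^ε}f^ε)-m_{f^ε}Δ_ω f^ε]$ is~$\mathcal O(η^2ε)$, which is~$\mathcal O(ε^2)$ when~$η=\mathcal O(√ε)$ and even smaller when~$η=\mathcal O(ε)$; the~$\mathcal O(η^4)$ remainder is likewise~$\mathcal O(ε^2)$. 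This is precisely why the two scalings produce the same model, and it reduces~\eqref{KFP-meanfield-eps} to~$ε(∂_t f+ω·∇_x f)=Q(f)+\mathcal O(ε^2)$.

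Next I would expand order by order. At~$\mathcal O(1)$ we get~$Q(f^{(0)})=0$; together with the standing assumption that~$f^ε$ converges locally to a \emph{stable} uniform equilibrium, Proposition~\ref{prop-equilibria} forces~$f^{(0)}=ρ(x,t)$ with~$ρ<ρ_c$. At~$\mathcal O(ε)$ we get~$∂_tρ+ω·∇_xρ=\mathcal L_ρ f^{(1)}$, where~$\mathcal L_ρ$ is the linearization of~$Q$ at the uniform state. The delicate bookkeeping is at~$J=0$, where~$Ω_f=J_f/|J_f|$ is singular: I would therefore work with the smooth combination~$ν(|J_f|)Ω_f=\tfrac{ν(|J_f|)}{|J_f|}J_f$, which by Hypothesis~\ref{hyp-prop-chaos} is Lipschitz with expansion~$ν_1J_f+o(|J_f|)$, and the identity~$∇_ω·(P_{ω^⊥}A)=Δ_ω(A·ω)=-(n-1)\,ω·A$ for constant~$A$ (as used in~\eqref{expansion-alignment}). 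This yields~$\mathcal L_ρ g=ν_1(n-1)ρ\,(ω·J_g)+τ_0Δ_ω g$. The kernel of~$\mathcal L_ρ$ is the constants, so the solvability condition~$∫_𝕊(∂_tρ+ω·∇_xρ)\,\d ω=0$ gives~$∂_tρ=\mathcal O(ε)$ at this order; and using~$J_{ω·A}=\tfrac1n A$ from~\eqref{eq-omega-omega}, one checks that~$\mathcal L_ρ$ acts on degree‑one spherical harmonics~$ω·A$ as multiplication by~$-(n-1)τ_0(1-\tfrac{ρ}{ρ_c})=-λ$, which is exactly the rate of Proposition~\ref{prop-stability-uniform} and is consistent with~\eqref{ODE-J-reduced}. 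Inverting~$\mathcal L_ρ$ on the right‑hand side~$ω·∇_xρ$ (the inversion is legitimate since the source is a degree‑one harmonic, hence orthogonal to the kernel) gives~$f^{(1)}=-λ^{-1}\,ω·∇_xρ$, which has zero average on~$𝕊$ and is exactly the correction displayed in~\eqref{eq:diff_correction}.

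Finally I would close the hydrodynamic system. From~$J_{ω·A}=\tfrac1n A$ we get~$J_{f^ε}=ε J_{f^{(1)}}+\mathcal O(ε^2)=-\tfrac{ε}{n\,λ(ρ^ε)}∇_xρ^ε+\mathcal O(ε^2)$; substituting this into the exact mass conservation law~\eqref{mass-conservation-eps} and discarding the~$\mathcal O(ε^2)$ remainder produces~$∂_tρ^ε=\tfrac{ε}{n(n-1)τ_0}∇_x·\bigl((1-\tfrac{ρ^ε}{ρ_c})^{-1}∇_xρ^ε\bigr)$, which is~\eqref{diffusion_disorder}; the diffusion coefficient is positive precisely because~$ρ^ε<ρ_c$, so the limiting equation is a (nonlinear) parabolic equation. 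The main obstacle is not the algebra, which is explicit once~$\mathcal L_ρ$ is identified, but the careful treatment of the~$J_f=0$ singularity together with the order counting of the~$η$‑terms: one must verify that~$\tfrac{ν(|J|)}{|J|}$ and~$τ(|J|)$ enter only through their values~$ν_1$ and~$τ_0$ at the origin and that no~$η$‑dependent contribution survives at order~$ε$ in the disordered regime. Everything else is the standard formal Chapman--Enskog closure.
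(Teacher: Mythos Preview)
Your proposal is correct and follows essentially the same Chapman--Enskog approach as the paper. The only cosmetic difference is that the paper writes $f^ε=ρ^ε+ε f_1^ε$ with $ρ^ε$ the exact density and then determines $J_{f_1^ε}$ by a self-consistent fixed-point relation $J_{f_1^ε}=-\tfrac{1}{τ_0 n(n-1)}∇_xρ^ε+\tfrac{ρ^ε}{ρ_c}J_{f_1^ε}+O(ε)$, whereas you diagonalize the linearized operator $\mathcal L_ρ$ directly on degree-one harmonics and read off the eigenvalue $-λ$; these are the same computation presented differently.
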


\begin{proof} We let~$ρ^ε=ρ_{f^ε}$ and write~$f^ε=ρ^ε+ε f_1^ε(x,ω,t)$ (so we have~$∫_𝕊f_1^ε\d ω=0$).
The assumption is that~$f_1^ε$ is a~$\mathcal O(1)$ quantity as~$ε→0$.
We then get
\begin{equation}
\label{J-f1}
J_{f^ε}=ε J_{f_1^ε},
\end{equation}
and the model~\eqref{KFP-meanfield-eps} becomes:
\begin{equation}
\label{KFP-diffusion-expansion}
ε(∂_tρ^ε+ω·∇_xρ^ε) = - ε ν'(0)∇_ω·(P_{ω^⊥}J_{f^ε_1}ρ^ε) + ε τ_0 Δ_ω f_1^ε +\mathcal O(ε^2)+\mathcal O(η^2ε).
\end{equation}
Additionally, using~\eqref{mass-conservation-eps} and~\eqref{J-f1}, we get that~$∂_tρ^ε=\mathcal O(ε)$.
Therefore we can put~$ε∂_tρ^ε$ into the~$O(ε^2)$ terms of~\eqref{KFP-diffusion-expansion} and get, in both scalings, 
\[Δ_ω f_1^ε=\frac1{τ_0}(∇_xρ^ε-ν'(0)(n-1)ρ^ε J_{f^ε_1})\cdotω + \mathcal O(ε).\]
We can solve this equation for~$f_1^ε$ and, together with the fact that~$ν'(0)=\frac{τ_0n}{ρ_c}$ (thanks to the definition~\eqref{def-rho-c} of~$ρ_c$), we get
\[f_1^ε=\big(-\frac1{τ_0(n-1)}∇_xρ^ε+\frac{nρ^ε}{ρ_c}J_{f^ε_1}\big)\cdotω + \mathcal O(ε).\]
This gives us, using~\eqref{eq-omega-omega}, that
\[J_{f_1^ε}=-\frac1{τ_0n(n-1)}∇_xρ^ε+\frac{ρ^ε}{ρ_c}J_{f^ε_1} + \mathcal O(ε),\]
which implies that~$f_1^ε=n J_{f_1^ε}\cdotω+ \mathcal O(ε)$ and that we have
\[J_{f_1^ε}=\frac{-1}{(n-1)nτ_0\big(1-\frac{ρ^ε}{ρ_c}\big)}∇_xρ^ε + \mathcal O(ε).\]
Therefore we obtain the expression~\eqref{eq:diff_correction} of~$f_1^ε$.
Moreover, inserting this expression of~$J_{f_1^ε}$ into the conservation of mass~\eqref{mass-conservation-eps} gives the diffusion model~\eqref{diffusion_disorder}.
\end{proof}

\begin{remark} The same remark was made in~\cite{degond2012macroscopic} (see Remark~3.1 therein): the expression of~$f^ε_1$, which is given by the~$\mathcal O(ε)$ term of~\eqref{eq:diff_correction} confirms that the approximation is only valid in the region where~$ρ_c-ρ^ε ≫ ε$.
The diffusion coefficient is only positive in the case where the uniform distribution is stable for the homogeneous model ($ρ^ε<ρ_c$) and it blows up as~$ρ^ε$ tends to~$ρ_c$, showing that the Chapman-Enskog expansion loses its validity.
\end{remark}

\subsection{Ordered region: hydrodynamic model and hyperbolicity}

\label{sec-qualitative}

We now turn to the derivation of a macroscopic model in a region where the local equilibria follow a given branch of stable von Mises--Fisher equilibria.
More precisely, we suppose that the function~$f^ε$ converges towards~$ρ(x,t)M_{κ(ρ(x,t))Ω(x,t)}$ in a given region, where~$κ(ρ)$ is a branch of solutions of the compatibility equation~\eqref{eq-compat2} defined for a given range of positive values of~$ρ$, and which correspond to stable equilibria (in the sense of Theorem~\ref{thm-strong-stability-anisotropic}).
This implies that~$κ$ is an increasing function of~$ρ$.
The goal is to prove the following theorem, which gives the evolution equations for~$ρ(x,t)$ and~$Ω(x,t)$, assuming that the convergence of~$f^ε$ is as smooth as needed.
\begin{theorem}
We suppose that~$f^ε$ converges as~$ε→0$ towards~$ρ(x,t)M_{κ(ρ(x,t))Ω(x,t)}$, for a positive density~$ρ(x,t)$ and an orientation~$Ω(x,t)∈𝕊$, and where~$ρ\mapstoκ(ρ)$ is a branch of solutions of the compatibility equation~\eqref{eq-compat2}.
We also suppose that the convergence of~$f^ε$ and of all its needed derivatives is sufficiently strong.
Then~$ρ$ and~$Ω$ satisfy the following system of partial differential equations:
\begin{gather}
\label{thm-evolution-rho}
∂_tρ+∇_x·(ρ c_1 Ω)=0,\\
\label{thm-evolution-omega}
ρ \, \left( ∂_t Ω + c_2\, (Ω · ∇_x) Ω \right) + Θ \, P_{Ω^⊥} ∇_x ρ=\mathcal K_2\,δ\,P_{Ω^⊥}Δ_x(ρ c_1 Ω),
\end{gather}
where~$\mathcal K_2$ is the scaling parameter corresponding to the limit of~$K_2\frac{η^2}{ε}$ as~$ε→0$, and where the coefficients~$c_1$,~$c_2$,~$Θ$ and~$δ$ are the following functions of~$ρ(x,t)$ (where the dependence on~$ρ$ for~$κ$ or on~$κ(ρ)$ for~$c$,~$\widetilde c$ or~$j$ is omitted when no confusion is possible):
\begin{align}
\label{def-c1}c_1(ρ)&=c(κ)\\
c_2(ρ)&=\widetilde c(κ)=\frac{⟨\cosθ\, h_κ(\cosθ)\sin^2θ⟩_{M}}{⟨h_κ(\cosθ)\sin^2θ⟩_{M}}=
 \frac{∫_0^π \cos θ\, h_κ(\cosθ)\, e^{κ\cosθ} \, 
\sin^{n} θ \, \dθ}{∫_0^π h_κ(\cosθ)\,e^{κ\cosθ} \, \sin^{n} θ \, \dθ}\, ,\label{def-c-tilda} \\
Θ(ρ)&=\frac1{κ}+ (\widetilde c -c)\,\frac{ρ}{κ}\,\frac{\d κ}{\d ρ}=\frac{n-\frac{κ}{c} + κ \widetilde{c} - 1 + \frac{κ}{j}\frac{\d j}{\d κ}}{κ\big(n-\frac{κ}{c} + κ c - 1 + \frac{κ}{j}\frac{\d j}{\d κ}\big)}\, ,
\label{def-Theta-hyperbolicity}\\
\label{def-delta}
δ(ρ)&=\frac{ν(j)}{c}\,\big(\frac{n-1}{κ}+\widetilde c\big).
\end{align}
The function~$h_κ$ is defined below at Proposition~\ref{prop-GCI}.
\end{theorem}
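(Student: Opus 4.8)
The plan is to pass to the limit~$ε→0$ in the rescaled kinetic equation~\eqref{KFP-meanfield-eps} by the moment method based on the generalized collision invariants (GCI) of~$Q$, exactly as in the derivations of the Self-Organized Hydrodynamic model in~\cite{degond2008continuum,frouvelle2012continuum,degond2012macroscopic}. Throughout we use the standing assumption that~$f^ε$ and its needed space--time derivatives converge strongly to~$f_0:=ρ\,M_{κ(ρ)Ω}$, and we set~$\mathcal K_2:=\lim_{ε→0}K_2η^2/ε$, so that~$\mathcal K_2=0$ in the scaling~$η=\mathcal O(ε)$ and~$\mathcal K_2>0$ in the scaling~$η=\mathcal O(\sqrt ε)$, and in both regimes~$η^4/ε→0$. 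Since~$ρ\,c(κ)=j(κ)$ by the compatibility equation~\eqref{eq-compat2}, the current of the limiting equilibrium is~$J_{f_0}=ρ\,c(κ)Ω=ρ\,c_1Ω$ and~$k(|J_{f_0}|)=k(j(κ))=κ$, i.e.~the local von Mises--Fisher weight carries exactly the concentration~$κ=κ(ρ)$.

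The continuity equation~\eqref{thm-evolution-rho} is immediate. Integrating~\eqref{KFP-meanfield-dimless} over~$ω∈𝕊$ gives the exact local conservation of mass~$∂_tρ_{f^ε}+∇_x·J_{f^ε}=0$; letting~$ε→0$ with~$ρ_{f^ε}→ρ$ and~$J_{f^ε}→ρ\,c_1Ω$ yields~\eqref{thm-evolution-rho}. Equivalently, this is the solvability condition obtained by testing~\eqref{KFP-meanfield-eps} against the trivial collision invariant~$ψ≡1$, whose~$\mathcal K_2$-contribution vanishes because~$∫_𝕊∇_ω·(\,\cdot\,)\,\d ω=∫_𝕊Δ_ω(\,\cdot\,)\,\d ω=0$.

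For the orientation equation~\eqref{thm-evolution-omega} we use the nontrivial GCI. For~$Ω∈𝕊$ and the local concentration~$κ$, these are, besides the constants, the functions~$ψ_Ω^A(ω)=h_κ(ω·Ω)\,(A·ω)$ with~$A∈Ω^⊥$, where~$h_κ$ is the function of Proposition~\ref{prop-GCI}; it is characterized by the requirement that~$Δ_ωψ_Ω^A+κ\,P_{ω^⊥}Ω·∇_ωψ_Ω^A$ equal~$ω·A$, which reduces to a degenerate elliptic ODE for~$h_κ$ on~$(-1,1)$. The key property is that~$∫_𝕊ψ_{Ω_f}^A\,Q(f)\,\d ω=0$ for every~$f$, when~$h_κ$ is built with~$κ=k(|J_f|)$ and~$Ω=Ω_f$: indeed, two integrations by parts in the divergence form~\eqref{eq-Qdiv} of~$Q$ give
\[∫_𝕊ψ_{Ω_f}^A\,Q(f)\,\d ω=τ(|J_f|)∫_𝕊 f\,\big(Δ_ωψ_{Ω_f}^A+k(|J_f|)\,P_{ω^⊥}Ω_f·∇_ωψ_{Ω_f}^A\big)\,\d ω=τ(|J_f|)\,(A·J_f)=0,\]
since~$A·J_f=|J_f|\,(A·Ω_f)=0$. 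We then test~\eqref{KFP-meanfield-eps} against~$ψ_{Ω_{f^ε}}^A$, divide by~$ε$, and let~$ε→0$: the right-hand side contributes~$0$ by this property together with~$\mathcal O(η^4/ε)→0$, leaving
\[∫_𝕊ψ_Ω^A\,(∂_t f_0+ω·∇_x f_0)\,\d ω+\mathcal K_2∫_𝕊ψ_Ω^A\,\big[∇_ω·(P_{ω^⊥}ℓ_{f_0}f_0)-m_{f_0}Δ_ω f_0\big]\,\d ω=0,\]
with~$ℓ_{f_0},m_{f_0}$ as in~\eqref{KFP-meanfield-eps} evaluated at~$f_0$ (so~$|J_{f_0}|=j(κ)$,~$J_{f_0}=ρ\,c_1Ω$). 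It remains to compute these integrals. For this I would expand~$∂_t f_0$ and~$∇_x f_0$ using~$∂_κ M_{κΩ}=(ω·Ω-c)\,M_{κΩ}$, the fact that the derivative of~$M_{κΩ}$ in a direction~$u⊥Ω$ equals~$κ\,(ω·u)\,M_{κΩ}$, and the chain rule~$κ=κ(ρ)$; then use the angular identities~$∫_{𝕊^{n-2}}(A·e)\,\d e=0$ and~$∫_{𝕊^{n-2}}(A·e)(e·v)\,\d e=\tfrac{|𝕊^{n-2}|}{n-1}(A·v)$ to discard every ``scalar'' (depending on~$ω·Ω$ only) contribution and to reduce each surviving term to~$(A·\text{vector}⊥Ω)$ times a weighted moment of~$h_κ$. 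Concretely: in~$∂_t f_0$ only the~$∂_tΩ$-variation survives, producing a term~$∝⟨h_κ\sin^2θ⟩_M\,(A·∂_tΩ)$; the~$Ω$-part of~$ω·∇_x f_0$ produces~$∝⟨\cosθ\,h_κ\sin^2θ⟩_M\,(A·(Ω·∇_x)Ω)$, hence the coefficient~$c_2=\widetilde c(κ)$ of~\eqref{def-c-tilda}; its~$ρ$-part produces~$∝(A·P_{Ω^⊥}∇_xρ)$, hence~$Θ$; and in the~$\mathcal K_2$-bracket, after one integration by parts in~$ω$ and using that~$Δ_ωψ_Ω^A$ and~$Ω·∇_ωψ_Ω^A$ are again proportional to~$A·ω$, the~$m_{f_0}Δ_ω$ term and the~$ν'(|J|)Ω$ part of~$ℓ_{f_0}$ drop by the same angular identities, leaving~$\tfrac{ν(j)}{j}P_{Ω^⊥}Δ_x(ρ c_1Ω)$, which — after using the ODE satisfied by~$h_κ$ — yields~$δ$ as in~\eqref{def-delta}. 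Dividing through by the common positive factor~$\tfrac{κ}{n-1}⟨h_κ(\cosθ)\sin^2θ⟩_M$ and using that~$A∈Ω^⊥$ is arbitrary produces~\eqref{thm-evolution-omega}. Finally, the first form of~$Θ$ in~\eqref{def-Theta-hyperbolicity} falls straight out of this bookkeeping, and the second (closed) form from implicitly differentiating~$j(κ)=ρ\,c(κ)$, which gives~$\tfrac{\d κ}{\d ρ}=\tfrac{c}{j'-ρ\,c'}$, together with~\eqref{eq-c-prime} and the identity~$⟨\cos^2θ⟩_M=1-\tfrac{n-1}{κ}c$ (so that~$c'(κ)=1-c^2-\tfrac{n-1}{κ}c$).

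I expect the main obstacle to be twofold. First, the GCI itself: establishing that~$h_κ$ is a well-defined solution of the degenerate elliptic ODE on~$(-1,1)$ and that the weighted moments~$⟨h_κ\sin^2θ⟩_M$ and~$⟨\cosθ\,h_κ\sin^2θ⟩_M$ are finite with~$⟨h_κ\sin^2θ⟩_M≠0$ — this is the content of Proposition~\ref{prop-GCI}, which I would simply invoke. Second, the computation of the four coefficients: although each step is elementary, the systematic cancellation of all~$ω·Ω$-only contributions against the GCI, the collapse of the~$ℓ_{f_0},m_{f_0}$ terms to a single~$P_{Ω^⊥}Δ_x(ρ c_1Ω)$ term, and above all recasting~$Θ$ into the closed form~\eqref{def-Theta-hyperbolicity} (which needs the identity for~$c'(κ)$ and a careful treatment of~$\d κ/\d ρ$ through the compatibility relation) are where the real work — and the risk of sign or normalization errors — lies.
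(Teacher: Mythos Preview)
Your proposal is correct and follows essentially the same route as the paper: derive~\eqref{thm-evolution-rho} from mass conservation, then obtain~\eqref{thm-evolution-omega} by testing~\eqref{KFP-meanfield-eps} against the generalized collision invariants~$h_κ(ω\cdot Ω_{f^ε})\,P_{Ω_{f^ε}^⊥}ω$ (with~$κ=k(|J_{f^ε}|)$), dividing by~$ε$, passing to the limit, and computing the three limiting integrals (transport term,~$ℓ$-term,~$m$-term) via the decomposition~$ω=\cosθ\,Ω+\sinθ\,v$; the paper also quotes the computation of the transport term from~\cite{frouvelle2012continuum} and derives the closed form of~$Θ$ exactly as you indicate, by differentiating~$ρ\,c(κ)=j(κ)$ and using~$c'(κ)=1-c^2-\tfrac{n-1}{κ}c$. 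One minor remark: for the~$\mathcal K_2$-contribution the paper does not integrate by parts and invoke the ODE for~$h_κ$ as you suggest, but simply expands~$∇_ω\!\cdot(P_{ω^⊥}ℓ\,ρ M_{κΩ})$ via the product rule (using~$∇_ω\!\cdot(P_{ω^⊥}ℓ)=-(n-1)\,ℓ\cdot ω$ and~$∇_ω M_{κΩ}=κ\,P_{ω^⊥}Ω\,M_{κΩ}$) and then integrates against~$h_κ(\cosθ)\,ω$, which gives~$δ$ directly without appealing to the defining equation of~$h_κ$.
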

\begin{proof}
The first equation~\eqref{thm-evolution-rho} (for the time evolution of~$ρ$) can easily be derived from the conservation of mass~\eqref{mass-conservation-eps}, since the hypotheses imply that~$J_{f^ε}$ converges to~$ρ c(κ(ρ))Ω$, and thanks to~\eqref{expansion-mathcalJ}, we obtain:
\[∂_tρ+∇_x·(ρ c(κ(ρ))Ω)=0.\]
The main difficulty is the derivation of an equation of evolution for~$Ω$, since we do not have any conservation relation related to this quantity.
To this aim, the main tool consists in the determination of the so-called generalized collisional invariants, introduced by Degond and Motsch~\cite{degond2008continuum} to study the model corresponding to the case~$ν=τ=1$ and the scaling~$η=\mathcal O(ε)$ in our setting.
These generalized collisional invariants were then used successfully to derive the same kind of evolution equation for some variants of the model we are studying (see~\cite{degond2012macroscopic} when~$ν(|J|)=|J|$ and~$τ=1$,~\cite{frouvelle2012continuum} for the case where~$ν$ is a function of~$ρ$, and where the interaction is anisotropic,~\cite{degond2010macroscopic} for another type of alignment, based on the curvature of the trajectories, and~\cite{degond2012hydrodynamic} in the case of the second scaling~$η=\mathcal O(√{ε})$ when~$ν=τ=1$).

The idea is to introduce, for a given~$κ>0$ and~$Ω∈𝕊$, the operator~$L_{κΩ}$ (linearized operator of~$Q$):
\[L_{κΩ}(f) = Δ_ω f - κ∇_ω·(f P_{ω^⊥}Ω)=∇_ω · \left[ M_{κΩ} ∇_ω \left( \frac{f}{M_{κΩ}} \right) \right],\]
so that we have~$Q(f)=τ(|J_f|) L_{k(|J_f|)Ω_f}(f)$.
And we define the set of generalized collisional invariants~${\mathcal C}_{κΩ}$:
\begin{definition}
The set~${\mathcal C}_{κΩ}$ of generalized collisional invariants associated to~$κ∈ℝ$ and~$Ω∈𝕊$, is the following vector space:
\[{\mathcal C}_{κΩ}=\left\{ψ|∫_{ω ∈ 𝕊} L_{κΩ}(f) \, ψ \, \dω = 0 , \, ∀ f \text{ such that } \, P_{Ω^⊥}J_f=0 \right\}.\]
\end{definition}
Hence, if~$ψ$ is a collisional invariant in~${\mathcal C}_{κΩ}$, we have~$∫_{ω ∈ 𝕊} Q(f) \, ψ \, \dω = 0$ for any function~$f$ such that~$k(|J_f|)=κ$ and~$Ω_f=Ω$.

The determination of~${\mathcal C}_{κΩ}$ has been done in~\cite{frouvelle2012continuum}.
We recall the result here:
\begin{proposition}
There exists a positive function~$h_κ:[-1,1]→ℝ$ such that
\[{\mathcal C}_{κΩ}=\{ω↦h_κ(ω\cdotΩ)A\cdotω+C,\, C∈ℝ, A∈ℝ^n,\text{ with }A\cdotΩ=0\}.\]
\end{proposition}
\label{prop-GCI} 
More precisely,~$h_κ(\cosθ)=\frac{g_κ(θ)}{\sinθ}$, where~$g_κ$ is the unique solution in the space~$V$ of the elliptic problem~$\widetilde L_κ^*g(θ)=\sinθ$, where
\begin{gather*}
\widetilde L_κ^*g(θ)=-\sin^{2-n}θ e^{-κ\cosθ}\tfrac{\d}{\dθ}(\sin^{n-2}θ e^{κ\cosθ}g'(θ))+\tfrac{n-2}{\sin^2θ}g(θ),\\
V = \{ g \, | \,(n-2)(\sinθ)^{\frac n2-2} g ∈ L^2(0,π), \, (\sinθ)^{\frac n2-1}g ∈ H^1_0(0,π) \}.
\end{gather*}
We now define the vector-valued generalized collisional invariant associated to~$κ$ and~$Ω$ as
\[\vec{ψ}_{κΩ}(ω)= h_κ(ω\cdotΩ) P_{Ω^⊥}ω,\]
and we have the following useful property:
\[∀ f\text{ such that }k(|J_f|)=κ\text{ and }Ω_f=Ω,\quad ∫_{ω ∈ 𝕊} Q(f) \, \vec{ψ}_{κΩ} \, \dω = 0.\]
The next step consists in multiplying the rescaled kinetic model~\eqref{KFP-meanfield-eps} by~$\frac1{ε}\vec{ψ}_{κ^ε Ω_{f^ε}}$, with~$κ^ε=k(|J_{f^ε}|)$, and to integrate it on the sphere.
We get, 
\[P_{(Ω^ε)^⊥}\, \big( X^ε+K_2\frac{η^2}{ε} [Y^ε+Z^ε]\big)=\mathcal O\big(\frac{η^4}{ε}\big),\]
where
\begin{gather*} 
X^ε = ∫_{ω ∈ 𝕊} ( ∂_t f^ε + ω · ∇_x f^ε)\, h_{κ^ε}(ω · Ω^ε) \, ω \, \dω\,,\\
Y^ε = ∫_{ω ∈ 𝕊} ∇_ω·(P_{ω^⊥}ℓ_{f^ε}f^ε)\, h_{κ^ε}(ω · Ω^ε) \, ω \, \dω\, ,\\
Z^ε = ∫_{ω ∈ 𝕊} m_{f^ε}Δ_ω f^ε\, h_{κ^ε}(ω · Ω^ε) \, ω \, \dω\,.
\end{gather*}
Now we can pass to the limit~$ε→0$.
We denote by~$\mathcal K_2$ the limit of~$K_2\frac{η^2}{ε}$, which makes sense in both scalings (either~$η=\mathcal O(ε)$, and~$\mathcal K_2=0$, or~$η=\mathcal O(\sqrtε)$ and we suppose that~$\mathcal K_2$ is a positive quantity), and we obtain 
\begin{equation}
\label{p-x-y-z}
P_{Ω^⊥}\,( X+\mathcal K_2[Y+Z])=0,
\end{equation}
where, since we suppose that~$f^ε→ρ(x,t)M_{κ(ρ(x,t))Ω(x,t))}$, we have
\begin{gather*} 
X = ∫_{ω ∈ 𝕊} ( ∂_t ( ρ M_{κΩ}) + ω · ∇_x (ρ M_{κΩ}))\, h_κ(ω · Ω) \, ω \, \dω\,,\\ 
Y = ∫_{ω ∈ 𝕊} ∇_ω·(P_{ω^⊥}ℓ_{ρ M_{κΩ}}ρ M_{κΩ})\, h_{κ}(ω · Ω) \, ω \, \dω\,,\\
Z = ∫_{ω ∈ 𝕊} m_{ρ M_{κΩ}}Δ_ω(ρ M_{κΩ})\, h_{κ}(ω · Ω) \, ω \, \dω\,.
\end{gather*}
The computation of~$P_{Ω^⊥}\, X$ has been done in~\cite{frouvelle2012continuum}: we get
\begin{equation}
\label{PX}
P_{Ω^⊥}\, X = ⟨h_κ(\cosθ)\sin^2θ⟩_{M}\,ρ\, \frac{κ}{n-1} \, \left( ∂_t Ω + \widetilde c \, (Ω · ∇_x) Ω \right) + Θ \, P_{Ω^⊥} ∇_x ρ,
\end{equation}
where~$\widetilde c$ and~$Θ$ are given by~\eqref{def-c-tilda} and the first expression of~\eqref{def-Theta-hyperbolicity}.
We now compute~$P_{Ω^⊥}Y$ and~$P_{Ω^⊥}Z$.
Since~$∇_ω·(P_{ω^⊥}A)=-(n-1)A\cdotω$ for any vector~$A∈ℝ^n$, we get
\[Y=ρ∫_{ω ∈ 𝕊} [-(n-1+κ\,ω·Ω)\,ℓ_{ρ M_{κΩ}}\cdotω +κ\, ℓ_{ρ M_{κΩ}}\cdotΩ]  \, h_{κ}(ω · Ω) \, ω \,M_{κΩ}\, \dω\,.\]
Writing~$ω=\cosθ\, Ω+\sin θ\,v$ (orthogonal decomposition with~$v∈𝕊_{n-2}$), and using the fact that~$∫_{𝕊_{n-2}}v\,\d v=0$ and~$∫_{𝕊_{n-2}}v⊗ v\,\d v=\frac1{n-1}P_{Ω^⊥}$, we obtain
\begin{align}
\nonumber P_{Ω^⊥}\, Y&=-⟨h_κ(\cosθ)\sin^2θ⟩_{M}\,ρ\,\frac{n-1+κ\widetilde c}{n-1}\,P_{Ω^⊥}ℓ_{ρ M_{κΩ}}\\
\label{PY}&=-⟨h_κ(\cosθ)\sin^2θ⟩_{M}\,ν(ρ\,c)\,\frac{n-1+κ\widetilde c}{(n-1)c}P_{Ω^⊥}Δ_x(ρ cΩ).
\end{align}
Finally, since~$Δ_ω(M_{κΩ})$ is a function of~$\cosθ$, the same decomposition and argument shows that we have~$P_{Ω^⊥}Z=0$.
Inserting~\eqref{PX} and~\eqref{PY} into~\eqref{p-x-y-z} and dividing by~$\frac{κ}{n-1}⟨h_κ(\cosθ)\sin^2θ⟩_{M}$ ends the derivation of~\eqref{thm-evolution-omega}, with~$δ$ given by~\eqref{def-delta}.

We finally derive the expression of~$Θ$ given by the right-hand side of~\eqref{def-Theta-hyperbolicity}.
We differentiate the compatibility condition~$ρ c=j$ with respect to~$κ$ (in a given local branch of solutions), and we get~$c\frac{\d ρ}{\d κ}+ρ\frac{\d c}{\d κ}=\frac{\d j}{\d κ}$.
As was shown in~\cite{degond2012macroscopic}, we have~$\frac{\d c}{\d κ}=1-(n-1)\frac{c}{κ}-c^2$, therefore we get
\[\frac{κ}{ρ}\,\frac{\d ρ}{\d κ}=c\,\frac{κ}{j}\,\frac{\d ρ}{\d κ}=\frac{κ}{j}\frac{\d j}{\d κ}-ρ\,\frac{κ}{j}(1-(n-1)\frac{c}{κ}-c^2)=(n-1 +\frac{κ}{j}\frac{\d j}{\d κ}-\frac{κ}{c}+κ c),\]
and finally, thanks to the first expression of~\eqref{def-Theta-hyperbolicity}, we have
\begin{equation}
\label{Theta-kappa}
Θ=\frac1{κ}+\frac{\widetilde c-c}{n-1 +\frac{κ}{j}\frac{\d j}{\d κ}-\frac{κ}{c}+κ c}=
\frac{n-\frac{κ}{c} + κ \widetilde{c} - 1 + \frac{κ}{j}\frac{\d j}{\d κ}}{κ\big(n-\frac{κ}{c} + κ c - 1 + \frac{κ}{j}\frac{\d j}{\d κ}\big)},
\end{equation}
which gives an expression of~$Θ$ in terms of~$κ$ and the functions~$c$,~$\widetilde c$ and~$j$ only.
\end{proof}

\subsection{Hyperbolicity}
\label{sec-examples}
As shown in~\cite{degond2008continuum,degond2012hydrodynamic,frouvelle2012continuum}, and more precisely in~\cite{degond2012macroscopic} when the coefficients~$c_1$,~$c_2$ and~$Θ$ depend on~$ρ$, we have the following
\begin{proposition}
The self-organized hydrodynamic (SOH) model~\eqref{thm-evolution-rho}-\eqref{thm-evolution-omega} is hyperbolic if and only if~$Θ>0$.
\end{proposition}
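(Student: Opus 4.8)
The plan is to reduce hyperbolicity of \eqref{thm-evolution-rho}-\eqref{thm-evolution-omega} to a linear-algebra computation on its principal symbol, following the method of \cite{degond2012macroscopic}. Recall that a first-order system $\partial_t U + \sum_i A_i(U)\partial_{x_i} U = 0$ is hyperbolic when, for every unit vector $\xi$, the symbol $A(\xi) = \sum_i \xi_i A_i$ has only real eigenvalues and is diagonalizable. The term proportional to $\mathcal K_2$ in \eqref{thm-evolution-omega} is of second order in $x$ and therefore does not enter the characteristic analysis, so I would work with the first-order system obtained by setting $\mathcal K_2 = 0$ (which is exactly the model in the scaling $\eta = \mathcal O(\varepsilon)$). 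First I would record its quasilinear form for the unknown $U = (\rho,\Omega)$, keeping in mind that $\Omega$ lies on the unit sphere of $\mathbb R^n$: a perturbation $\delta\Omega$ of a state $\Omega_0$ must satisfy $\delta\Omega\cdot\Omega_0 = 0$, so the symbol acts on the $n$-dimensional space $\mathbb R\times\Omega_0^\perp$.

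Next I would linearize around an arbitrary constant state $(\rho_0,\Omega_0)$ with $\rho_0>0$ and substitute a plane wave of the form $e^{i(\xi\cdot x - \sigma t)}$. Writing $a = (\rho c_1)'(\rho_0)$, $b = \rho_0 c_1(\rho_0)$, $\mu = \Omega_0\cdot\xi$ and $\xi_\perp = P_{\Omega_0^\perp}\xi$, the eigenvalue relations for the wave speed $\sigma$ split into two decoupled pieces. The components of $\delta\Omega$ orthogonal to both $\Omega_0$ and $\xi$ do not couple to $\delta\rho$ and simply yield the real eigenvalue $\sigma = c_2(\rho_0)\mu$ with an $(n-2)$-dimensional eigenspace; the pair formed by $\delta\rho$ and the $\xi_\perp$-component of $\delta\Omega$ obeys a $2\times 2$ system whose matrix is
\[
\begin{pmatrix} a\mu & b(1-\mu^2)\\ \Theta/\rho_0 & c_2\mu \end{pmatrix}.
\]
The two directions $\xi = \pm\Omega_0$ (for which $\xi_\perp = 0$) are handled separately and contribute only real eigenvalues together with a full basis of eigenvectors, irrespective of $\Theta$; the total algebraic multiplicity is $2 + (n-2) = n$, as it must be.

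The heart of the matter is the discriminant of the characteristic polynomial of that $2\times 2$ block, which a short computation gives as
\[
(a-c_2)^2\mu^2 + \frac{4b\,\Theta}{\rho_0}(1-\mu^2).
\]
Since $b = \rho_0 c_1(\rho_0) > 0$ — recall $c_1 = c(\kappa) > 0$ on any non-isotropic branch — and $\rho_0 > 0$, this quantity is strictly positive for every $\mu\in(-1,1)$ precisely when $\Theta > 0$: if $\Theta > 0$ it is the sum of a nonnegative term and a strictly positive one, whereas if $\Theta\leq 0$ the choice $\mu = 0$ (i.e. $\xi\perp\Omega_0$) makes it $\leq 0$. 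When the discriminant is positive the block has two distinct real eigenvalues, hence is diagonalizable; conversely, its off-diagonal entry $b(1-\mu^2)$ never vanishes for $|\mu|<1$, so the block is never a scalar multiple of the identity, and a nonpositive discriminant forces either a complex conjugate pair or a nontrivial Jordan block — in either case the symbol fails to be real-diagonalizable. Assembling the blocks, $A(\xi)$ is real-diagonalizable for all $\xi$ if and only if $\Theta > 0$, which is the claim. I expect the only delicate point to be the careful bookkeeping of the constraint $|\Omega| = 1$ — checking that the linearized $\delta\Omega$-equation preserves $\delta\Omega\cdot\Omega_0 = 0$, and correctly isolating the $2\times 2$ block from the transverse eigenspace and from the degenerate directions $\xi = \pm\Omega_0$ — rather than any genuine analytic difficulty.
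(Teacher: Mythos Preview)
Your proposal is correct and is exactly the computation the paper has in mind: the paper does not give a self-contained proof here but simply invokes \cite{degond2008continuum,degond2012hydrodynamic,frouvelle2012continuum} and especially \cite{degond2012macroscopic}, and your symbol analysis is precisely the argument carried out in those references. One small bookkeeping remark: depending on whether you take as second unknown $\xi_\perp\cdot\delta\Omega$ or the component of $\delta\Omega$ along the unit vector $\xi_\perp/|\xi_\perp|$, the factor $(1-\mu^2)$ lands on a different off-diagonal entry than the one you wrote; the characteristic polynomial and discriminant $(a-c_2)^2\mu^2 + \tfrac{4b\Theta}{\rho_0}(1-\mu^2)$ are unaffected, and in every coordinate choice at least one off-diagonal entry is nonzero for $|\mu|<1$, so your Jordan-block argument for the borderline case $\Theta=0$ goes through unchanged.
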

In that case it has been proved in~\cite{degond2012hydrodynamic} that the SOH model is locally well-posed in dimension~$2$ (provided~$δ⩾0$) and in dimension~$3$ (for the scaling where~$\mathcal K_2=0$, with an additional condition for the orientation of the initial data).
Therefore, in this section, we study the sign of these coefficients in some generic situations.
\begin{conjecture}
\label{conj-c-ctild}
For all~$κ>0$, we have~$0<\widetilde c(κ)<c(κ)$.
Consequently, in the SOH model, we have~$δ>0$, and the SOH model is well-posed if it is hyperbolic.
\end{conjecture}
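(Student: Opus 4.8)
The plan is to collapse the conjecture onto a single scalar, the weighted average~$⟨h_κ(\cosθ)\sin^2θ⟩_M$. Two elementary identities do this. First, combining~$⟨\cos^2θ⟩_M=c^2+c'$ (equation~\eqref{eq-c-prime}) with the relation~$\frac{\d c}{\d κ}=1-(n-1)\frac{c}{κ}-c^2$ recalled in Section~\ref{sec-macro-models} gives~$⟨\sin^2θ⟩_M=\frac{(n-1)c(κ)}{κ}$. Second, one tests the defining equation of~$h_κ$ against a degree-one spherical harmonic: writing~$ψ_a(ω)=h_κ(ω·Ω)\,(a·ω)$ with~$a⊥Ω$, the equation~$\widetilde L_κ^*g_κ=\sinθ$ reads on the sphere as~$\frac1{M_{κΩ}}∇_ω·(M_{κΩ}∇_ωψ_a)=-(a·ω)$; pairing it with~$b·ω$ ($b⊥Ω$) through the~$M_{κΩ}$-selfadjointness of~$g↦\frac1{M_{κΩ}}∇_ω·(M_{κΩ}∇_ω g)$, and using~$\frac1{M_{κΩ}}∇_ω·(M_{κΩ}∇_ω(b·ω))=-(n-1)(b·ω)-κ(ω·Ω)(ω·b)$ together with~$∫_{𝕊_{n-2}}v⊗v=\frac1{n-1}P_{Ω^⊥}$, one obtains
\[⟨h_κ(\cosθ)\,\sin^2θ\,[(n-1)+κ\cosθ]⟩_M=⟨\sin^2θ⟩_M.\]
Combined with the definition~\eqref{def-c-tilda} of~$\widetilde c$, these identities yield the exact formula~$\widetilde c(κ)=\frac{n-1}{κ}\big(\frac{c/κ}{⟨h_κ\sin^2θ⟩_M}-1\big)$, so that~$0<\widetilde c<c$ is \emph{equivalent} to the two‑sided estimate~$\frac{(n-1)c/κ}{n-1+κc}<⟨h_κ(\cosθ)\sin^2θ⟩_M<\frac{c}{κ}$ (the right‑hand bound being~$⟨h_κ\sin^2θ⟩_M<\frac{1}{n-1}⟨\sin^2θ⟩_M$, i.e.\ the~$\sin^2θ$-weighted mean of~$h_κ$ stays below its value~$\frac1{n-1}$ at~$κ=0$).

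The upper bound~$\widetilde c<c$, i.e.\ the left inequality, can be proved in full. Since~$g_κ=h_κ(\cosθ)\sinθ$ minimises~$\frac12 B(g,g)-ℓ(g)$ over the space~$V$, where~$B$ is the coercive symmetric Dirichlet form of~$\widetilde L_κ^*$ and~$ℓ(g)=∫_0^πg\,\sin^{n-1}θ\,e^{κ\cosθ}\,\dθ$, one has~$ℓ(g_κ)=B(g_κ,g_κ)=\max_g[2ℓ(g)-B(g,g)]$. Evaluating at~$g=λ\sinθ$, with~$ℓ(\sinθ)=Z_κ⟨\sin^2θ⟩_M$ and~$B(\sinθ,\sinθ)=Z_κ[(n-1)-⟨\sin^2θ⟩_M]$ (here~$Z_κ=∫_0^πe^{κ\cosθ}\sin^{n-2}θ\,\dθ$), optimisation over~$λ$ gives~$⟨h_κ\sin^2θ⟩_M⩾\frac{⟨\sin^2θ⟩_M^{\,2}}{(n-1)-⟨\sin^2θ⟩_M}=\frac{(n-1)c^2}{κ(κ-c)}$. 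After cancellation the wanted bound~$⟨h_κ\sin^2θ⟩_M>\frac{(n-1)c/κ}{n-1+κc}$ reduces to~$cn+κc^2>κ$, i.e.\ to~$c-κc'>0$, which is precisely the monotonicity~$\big(\frac{c}{κ}\big)'<0$ proved in~\cite{frouvelle2012dynamics}; strictness holds because~$g_κ$ is not proportional to~$\sinθ$ ($\widetilde L_κ^*(\sinθ)=\sinθ[(n-1)+κ\cosθ]≠\sinθ$ when~$κ>0$). This gives~$\widetilde c(κ)<c(κ)$ for all~$κ>0$.

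The positivity~$\widetilde c>0$ is the hard part, and I expect it to be the main obstacle. Passing to~$x=\cosθ$ and setting~$W_κ(x)=h_κ(x)e^{κx}>0$, the evenness of~$(1-x^2)^{(n-1)/2}$ and the reflection~$θ↦π-θ$ show that~$\widetilde c>0$ follows as soon as~$W_κ(x)⩾W_κ(-x)$ for~$x>0$, in particular as soon as~$W_κ$ is nondecreasing on~$[-1,1]$. Rewriting the equation for~$h_κ$ as~$(1-x^2)h_κ''+[κ(1-x^2)-(n+1)x]h_κ'-[(n-1)+κx]h_κ=-1$ and substituting~$h_κ=e^{-κx}W_κ$ gives~$(1-x^2)W_κ''-[κ(1-x^2)+(n+1)x]W_κ'+[κnx-(n-1)]W_κ=-e^{κx}$; differentiating once more, the function~$p=W_κ'$ solves
\[(1-x^2)p''-[(n+3)x+κ(1-x^2)]\,p'+[κ(n+2)x-2n]\,p=-κ\,(e^{κx}+nW_κ)<0.\]
Thus~$p$ is a strict supersolution for the operator on the left, and the plan is to conclude~$p⩾0$ on~$[-1,1]$ by a maximum principle, once the endpoint signs are settled (from the boundary form of the~$W_κ$-equation these amount to~$h_κ(-1)⩾\frac1{n-1+κn}$ and, when~$κ<\frac{n-1}{n}$, to~$h_κ(1)⩽\frac1{n-1-κn}$, a priori bounds which themselves require justification). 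The difficulty is that the zeroth-order coefficient~$κ(n+2)x-2n$ changes sign on~$(-1,1)$ for~$κ$ large, so the maximum principle does not apply as such: one must precede it by a gauge change~$p=φq$ with a suitable positive barrier~$φ$, or else replace the whole argument by a monotonicity-in-$κ$ argument anchored at the explicit solution~$h_0\equiv\frac1{n-1}$, showing that~$κ\,⟨h_κ\sin^2θ⟩_M/c(κ)$ decreases from its value~$1$ at~$κ=0$. Carrying this out uniformly in~$κ$ and~$n$ is exactly what is missing, which is why the statement is only a conjecture; short of it one can at least confirm~$\widetilde c>0$ by the asymptotics~$κ→0$ and~$κ→∞$ together with the numerics reported in Section~\ref{subsec-first}.

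Finally, once~$0<\widetilde c(κ)<c(κ)$ is known, formula~\eqref{def-delta} gives~$δ(ρ)=\frac{ν(j)}{c}\big(\frac{n-1}{κ}+\widetilde c\big)>0$, since~$c>0$ and~$ν(j(κ))=κ\,τ(j(κ))>0$ for~$κ>0$; the well-posedness of the SOH model whenever it is hyperbolic (i.e.\ whenever~$Θ>0$) then follows from the results of~\cite{degond2012hydrodynamic} recalled above, unconditionally in dimension~$2$ and, in dimension~$3$, for the scaling~$\mathcal K_2=0$ under the stated condition on the orientation of the initial data.
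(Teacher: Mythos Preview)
The paper does not prove this statement: it is explicitly labelled a \emph{conjecture}, and the only supporting evidence offered there is numerical (Figure~3 of~\cite{degond2012macroscopic}) together with the small- and large-$\kappa$ asymptotics~\eqref{exp-c}--\eqref{exp-ctild}. There is no proof in the paper to compare your attempt against.

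That said, your proposal goes further than the paper. Your reduction is correct: the identity $\langle h_\kappa(\cos\theta)\sin^2\theta[(n-1)+\kappa\cos\theta]\rangle_M=\langle\sin^2\theta\rangle_M=\frac{(n-1)c}{\kappa}$ (obtained by pairing the GCI equation with a degree-one harmonic, using the $M_{\kappa\Omega}$-selfadjointness of $L_{\kappa\Omega}^*$) does give the closed form $\widetilde c=\frac{n-1}{\kappa}\big(\frac{c/\kappa}{\langle h_\kappa\sin^2\theta\rangle_M}-1\big)$, and hence the equivalence you state. Your variational lower bound $\langle h_\kappa\sin^2\theta\rangle_M\ge\frac{\langle\sin^2\theta\rangle_M^{2}}{(n-1)-\langle\sin^2\theta\rangle_M}$ from the test function $g=\lambda\sin\theta$ is correct, and the reduction of the resulting inequality $\frac{(n-1)c^2}{\kappa(\kappa-c)}>\frac{(n-1)c/\kappa}{n-1+\kappa c}$ to $nc+\kappa c^2>\kappa$, i.e.\ to $\big(\frac{c}{\kappa}\big)'<0$, is clean and uses only facts already available in~\cite{frouvelle2012dynamics}. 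So you have in fact \emph{proved} the upper bound $\widetilde c(\kappa)<c(\kappa)$ for all $\kappa>0$, which the paper does not do.

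You are also right that the lower bound $\widetilde c>0$, equivalently $\langle h_\kappa\sin^2\theta\rangle_M<\frac{1}{n-1}\langle\sin^2\theta\rangle_M$, is the genuine obstruction. Your proposed route via monotonicity of $W_\kappa=h_\kappa e^{\kappa x}$ and a maximum principle for $p=W_\kappa'$ is plausible but, as you note, the zeroth-order coefficient $\kappa(n+2)x-2n$ spoils the direct argument for large $\kappa$; finding the right barrier $\varphi$ (or the right $\kappa$-monotonicity argument) is exactly why this remains open. Your honest assessment of this gap is appropriate, and it matches the status the paper gives the statement.
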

Numerically, this conjecture is clear, at least in dimension~$2$ and~$3$, as can be seen in Figure~$3$ of~\cite{degond2012macroscopic}.
We know it is true when~$κ$ is small or large, thanks to the asymptotics of~$c$ and~$\widetilde c$ given in~\cite{frouvelle2012continuum}:
\begin{align}
c&=
\begin{cases}
\tfrac1n κ-\tfrac1{n^2(n+2)} κ^3+O(κ^5)&\text{as }κ→0,\\
1- \frac{n-1}{2κ} +\frac{(n-1)(n-3)}{8κ^2} + O(κ^{-3})&\text{as }κ→∞,
\label{exp-c}
\end{cases}\\
\widetilde c&=
\begin{cases}
\frac{2n-1}{2n(n+2)}κ+O(κ^2)&\text{as }κ→0,\\
1-\frac{n+1}{2κ}+\frac{(n+1)(3n-7)}{24κ^2}+O(κ^{-3})&\text{as }κ→∞.
\end{cases}
\label{exp-ctild}
\end{align}
These asymptotics can also help us to know if the system is hyperbolic in various regimes.
In the next four propositions, we provide different cases where we can determine the hyperbolicity of the SOH model with simple assumptions on the behavior of the function~$k$.
The first result is about non-hyperbolicity in the neighborhood of the critical threshold~$ρ_*$ for a first order phase transition.
\begin{proposition}
\label{prop-hyperbolic-first}
Suppose that there is a first order phase transition with hysteresis as described by Proposition~\ref{prop-hysteresis}.
If Conjecture~\ref{conj-c-ctild} is true, then the SOH model associated to the branch of stable von Mises--Fisher equilibria (for~$ρ>ρ_*$) satisfies~$Θ(ρ)<0$ if~$ρ$ is sufficiently close to~$ρ_*$.
The SOH model is not hyperbolic.
\end{proposition}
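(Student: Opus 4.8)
The plan is to prove that $\Theta(\rho)\to-\infty$ as $\rho\to\rho_*^+$ along the stable branch of von Mises--Fisher equilibria; since the SOH model is hyperbolic if and only if $\Theta>0$, this immediately yields both assertions of the proposition.

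First I would recall the structure of the stable branch. By Proposition~\ref{prop-hysteresis}, under the unimodality assumption the stable family of von Mises--Fisher equilibria for $\rho>\rho_*$ is parametrized by $\kappa>\kappa_*$ through the compatibility relation $\rho=\frac{j(\kappa)}{c(\kappa)}$, with $\big(\frac{j}{c}\big)'(\kappa)>0$ there. Hence $\rho\mapsto\kappa(\rho)$ is increasing, $\frac{\d\kappa}{\d\rho}=\big(\big(\tfrac{j}{c}\big)'(\kappa)\big)^{-1}>0$, and $\kappa(\rho)\to\kappa_*^{+}$ as $\rho\to\rho_*^{+}$. Moreover, since $k$ is differentiable with Lipschitz derivative (Hypothesis~\ref{hyp-differentiable}) and $c$ is smooth, the function $\kappa\mapsto\frac{j}{c}(\kappa)$ is $C^1$, and being unimodal with an interior minimum at $\kappa_*$ it satisfies $\big(\frac{j}{c}\big)'(\kappa_*)=0$. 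By continuity, $\big(\frac{j}{c}\big)'(\kappa(\rho))\to0^{+}$ as $\rho\to\rho_*^{+}$, so that
\[\frac{\d\kappa}{\d\rho}\longrightarrow+\infty\qquad\text{as }\rho\to\rho_*^{+}.\]

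Next I would insert these facts into the first expression of $\Theta$ in~\eqref{def-Theta-hyperbolicity}, namely
\[\Theta(\rho)=\frac1{\kappa}+(\widetilde c-c)\,\frac{\rho}{\kappa}\,\frac{\d\kappa}{\d\rho}.\]
As $\rho\to\rho_*^{+}$ we have $\kappa\to\kappa_*>0$, so $\frac1{\kappa}\to\frac1{\kappa_*}$ and $\frac{\rho}{\kappa}\to\frac{\rho_*}{\kappa_*}>0$ are finite and positive; moreover $c$ and $\widetilde c$ are continuous in $\kappa$, and Conjecture~\ref{conj-c-ctild} gives $\widetilde c(\kappa_*)-c(\kappa_*)<0$. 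Combining this strictly negative limit with $\frac{\rho}{\kappa}\to\frac{\rho_*}{\kappa_*}>0$ and $\frac{\d\kappa}{\d\rho}\to+\infty$, the second term tends to $-\infty$, whence $\Theta(\rho)\to-\infty$. In particular $\Theta(\rho)<0$ for all $\rho$ sufficiently close to $\rho_*$, and therefore the SOH model associated to this branch fails to be hyperbolic there.

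The only point requiring care is the claim $\big(\frac{j}{c}\big)'(\kappa_*)=0$, rather than merely one-sided inequalities of opposite sign: this is exactly what the differentiability of $\frac{j}{c}$ at the interior extremum $\kappa_*$ provides, and the standing hypotheses guarantee that differentiability. Apart from this, the argument is a sequence of elementary continuity and limit computations and presents no genuine obstacle. (Equivalently, one may use the second expression of $\Theta$ in~\eqref{def-Theta-hyperbolicity}, whose denominator $n-\frac{\kappa}{c}+\kappa c-1+\frac{\kappa}{j}\frac{\d j}{\d\kappa}$ equals $\frac{\kappa}{\rho}\frac{\d\rho}{\d\kappa}$ and hence tends to $0^{+}$ along the stable branch, while the numerator $\widetilde c-c$ stays bounded away from $0$ with a negative sign.)
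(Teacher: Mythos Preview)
Your proof is correct and follows essentially the same route as the paper: use $(\tfrac{j}{c})'(\kappa_*)=0$ to get $\tfrac{\d\kappa}{\d\rho}\to+\infty$, then plug into the first expression of~\eqref{def-Theta-hyperbolicity} together with Conjecture~\ref{conj-c-ctild} to obtain $\Theta\to-\infty$. The only slip is in your parenthetical alternative: the numerator of the second expression in~\eqref{def-Theta-hyperbolicity} is $n-\tfrac{\kappa}{c}+\kappa\widetilde c-1+\tfrac{\kappa}{j}\tfrac{\d j}{\d\kappa}$, not $\widetilde c-c$; it differs from the denominator factor by $\kappa(\widetilde c-c)$ and hence tends to $\kappa_*(\widetilde c(\kappa_*)-c(\kappa_*))<0$, which is what you need.
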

\begin{proof}
  We have~$(\frac{j}{c})'(κ_*)=0$ and~$(\frac{j}{c})'(κ)>0$ for~$κ>κ_*$.
This gives that~$\frac{\d κ}{\d ρ}→+∞$ as~$κ→κ_*$, and then we use~\eqref{def-Theta-hyperbolicity} and Conjecture~\ref{conj-c-ctild} to get that~$Θ→-∞$ as~$κ→κ_*$ (for~$κ>κ_*$).
\end{proof}

We now provide the same type of proposition in the neighborhood of the critical threshold~$ρ_c$ in the case of a second order phase transition.
The following proposition gives a strong link between hyperbolicity and the critical exponent of a second order phase transition: it is hyperbolic when the critical exponent~$β$ is greater than~$\frac12$, and not hyperbolic when~$β<\frac12$ (this threshold value~$\frac12$ also corresponds to the lowest possible critical exponent which can appear in the case of the simple criterion given by Lemma~\ref{lemma-2nd-order}).
\begin{proposition}
\label{prop-hyperbolic-second}
We suppose that there is a second order phase transition as described by Proposition~\ref{prop-second-order}, and we consider the SOH model associated to the von Mises--Fisher equilibria (for~$ρ>ρ_c$).
We suppose furthermore that there is a critical exponent~$β$ as stated in Definition~\ref{def-critical-exponent}, and we assume that this estimation is also true at the level of the derivative:
\[\frac{\d κ}{\d ρ}\sim n α_0β(ρ-ρ_c)^{β-1},\text{ as }ρ\overset{>}{→}ρ_c.\]
Then
\begin{itemize}
\item[(i)] If~$β<\frac12$, then~$Θ(ρ)<0$ if~$ρ$ is sufficiently close to~$ρ_c$.
The SOH model is not hyperbolic.
\item[(ii)] If~$β>\frac12$, then~$Θ(ρ)>0$ if~$ρ$ is sufficiently close to~$ρ_c$.
The SOH model is hyperbolic.
\item[(iii)] If~$β=\frac12$ and~$α_0≠√{\frac{4(n+2)}{5n\,ρ_c}}$, then when~$ρ$ is sufficiently close to~$ρ_c$,~$Θ(ρ)$ is of the sign of~$√{\frac{4(n+2)}{5n\,ρ_c}}-α_0$.
\end{itemize}
\end{proposition}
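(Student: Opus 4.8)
The plan is to read off the sign of~$Θ(ρ)$ near the transition directly from the expression already established in~\eqref{def-Theta-hyperbolicity},
\[
Θ(ρ)=\frac1κ+(\widetilde c-c)\,\frac ρκ\,\frac{\d κ}{\d ρ},
\]
by inserting the small-$κ$ expansions of~$c$ and~$\widetilde c$ together with the critical-exponent asymptotics for~$κ(ρ)$ and~$\frac{\d κ}{\d ρ}$. Since~$ρ\overset{>}{→}ρ_c$ is equivalent to~$κ→0$ along the branch, all the needed ingredients are available.

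First I would extract the leading term of~$\widetilde c-c$: from~\eqref{exp-c} and~\eqref{exp-ctild},
\[
\widetilde c-c=\Big(\frac{2n-1}{2n(n+2)}-\frac1n\Big)κ+O(κ^2)=-\frac{5}{2n(n+2)}\,κ+O(κ^2),
\]
which in particular reconfirms~$\widetilde c-c<0$ near~$κ=0$ (as in Conjecture~\ref{conj-c-ctild}) and pins down the constant~$\frac5{2n(n+2)}$. Next, as in the proof of Proposition~\ref{prop-critical-exponent}, the relation~$α(ρ)=c(κ)\sim α_0(ρ-ρ_c)^β$ combined with~$c(κ)\sim\frac κn$ gives~$κ\sim nα_0(ρ-ρ_c)^β$; using this and the assumed derivative estimate~$\frac{\d κ}{\d ρ}\sim nα_0β(ρ-ρ_c)^{β-1}$ yields~$\frac ρκ\frac{\d κ}{\d ρ}\sim\frac{ρ_c\,β}{ρ-ρ_c}$. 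Plugging these into~\eqref{def-Theta-hyperbolicity} gives
\[
Θ(ρ)\sim\frac1{nα_0}\,(ρ-ρ_c)^{-β}-\frac{5α_0β\,ρ_c}{2(n+2)}\,(ρ-ρ_c)^{β-1}
\qquad\text{as }ρ\overset{>}{→}ρ_c.
\]
I then compare the exponents~$-β$ and~$β-1$, whose difference is~$1-2β$. For~$β>\frac12$ the first (positive) term dominates, so~$Θ>0$, proving~(ii); for~$β<\frac12$ the second (negative) term dominates, so~$Θ<0$, proving~(i); for~$β=\frac12$ both terms are of order~$(ρ-ρ_c)^{-1/2}$ and~$Θ$ has the sign of~$\frac1{nα_0}-\frac{5α_0ρ_c}{4(n+2)}$, which is positive exactly when~$α_0^2<\frac{4(n+2)}{5nρ_c}$, i.e.\ when~$α_0<\sqrt{\frac{4(n+2)}{5nρ_c}}$, proving~(iii).

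The computation is essentially routine once these pieces are in place; the two delicate points are the following. First, the argument requires differentiating the asymptotic identity~$κ\sim nα_0(ρ-ρ_c)^β$, which is not automatic — this is precisely why the statement assumes the derivative estimate rather than deriving it. Second, in the borderline case~$β=\frac12$ the two competing terms have the same order, so one must keep the \emph{exact} coefficients of the~$O(κ)$ contributions in both~$c$ and~$\widetilde c$: it is their combination~$-\frac5{2n(n+2)}$ that produces the threshold constant~$\sqrt{4(n+2)/(5nρ_c)}$, and any imprecision in these two expansions would spoil it. I expect this bookkeeping in the critical case to be the only real point of care.
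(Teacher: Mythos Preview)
Your proof is correct and follows essentially the same approach as the paper's own argument: both use the first expression in~\eqref{def-Theta-hyperbolicity}, insert the small-$κ$ expansion~$\widetilde c-c\sim-\frac{5}{2n(n+2)}κ$ together with~$κ\sim nα_0(ρ-ρ_c)^β$ and~$\frac{ρ}{κ}\frac{\d κ}{\d ρ}\sim\frac{βρ_c}{ρ-ρ_c}$, and then compare the exponents~$-β$ and~$β-1$ of the two competing terms. Your presentation differs only in that you write the asymptotic as a sum before splitting into cases, whereas the paper records the dominant term directly in each regime; the substance is identical.
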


\begin{proof}

We have~$κ(ρ)\sim n α_0(ρ-ρ_c)^β$, as~$ρ\overset{>}{→}ρ_c$.
So we get~$\frac{ρ}{κ}\frac{\d κ}{\d ρ}\sim β\frac{ρ_c}{ρ-ρ_c}$.
Finally, using~\eqref{exp-c}-\eqref{exp-ctild}, we get~$\widetilde c-c\sim -\frac{5}{2n(n+2)}κ$ as~$κ→0$.
We can then obtain an equivalent of~$Θ$ as~$ρ\overset{>}{→}ρ_c$, with~\eqref{def-Theta-hyperbolicity}:
\[Θ(ρ)\sim
\begin{cases}
\frac1{n\,α_0}(ρ-ρ_c)^{-β}&\text{if }β>\frac12\\
-\frac{5\,ρ_c\, α_0\, β}{2(n+2)}(ρ-ρ_c)^{β-1}&\text{if }β<\frac12\\
\big(\frac1{n\,α_0}-\frac{5\,ρ_c \,α_0}{4(n+2)}\big)\frac1{√{ρ-ρ_c}}&\text{if }β=\tfrac12,
\end{cases}\]
where the last expression is valid only if~$\frac1{nα_0}≠\frac{5ρ_c α_0}{4(n+2)}$.
The sign of~$Θ$ is then directly given by these equivalents, and this ends the proof.
\end{proof}
It is possible to refine Proposition~\ref{prop-critical-exponent} in order to have the critical exponent estimation on the level of the derivative, and then express the hyperbolicity of the system with the help of the expansion of~$k$ only.
In summary, we get the following proposition, the proof of which is left to the reader:
\begin{proposition}
\label{prop-refined-hyperbolicity-second}
If~$k$ satisfies:
\[k'(|J|)=\frac{n}{ρ_c}-a(q+1)|J|^q+o(|J|^q)\text{ as }|J|→∞,\]
then we have
\begin{itemize}
\item[(i)] if~$q<2$ and~$a>0$, the critical exponent is given by~$β=\frac1q$ and the SOH model is hyperbolic when~$ρ$ is sufficiently close to~$ρ_c$.
\item[(ii)] if~$q=2$ and~$a>\frac{n^2}{4\,ρ_c^3(n+2)}$, then~$β=\frac12$ and the SOH model is hyperbolic when~$ρ$ is sufficiently close to~$ρ_c$.
\item[(iii)] if~$q=2$ and~$-\frac{n^2}{ρ_c^3(n+2)}<a<\frac{n^2}{4\,ρ_c^3(n+2)}$, then~$β=\frac12$ and the SOH model is not hyperbolic for~$ρ$ close to~$ρ_c$.
\end{itemize}
\end{proposition}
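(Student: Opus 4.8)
The plan is to read off the critical exponent~$\beta$ and the amplitude~$\alpha_0$ from the local behaviour of~$k$ using Proposition~\ref{prop-critical-exponent}, and then to feed them into Proposition~\ref{prop-hyperbolic-second}. The only ingredient not already available is the hypothesis of Proposition~\ref{prop-hyperbolic-second} that the critical-exponent estimate also holds at the level of the derivative, namely~$\frac{\d\kappa}{\d\rho}\sim n\alpha_0\beta(\rho-\rho_c)^{\beta-1}$ as~$\rho\to\rho_c^+$; prescribing an expansion of~$k'$ rather than of~$k$ is exactly what makes this accessible, so most of the work goes into establishing it.

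First I would integrate the hypothesis (with~$|J|\to0$, as in Proposition~\ref{prop-critical-exponent}): from~$k'(|J|)=\frac{n}{\rho_c}-a(q+1)|J|^q+o(|J|^q)$ one gets~$\frac{k(|J|)}{|J|}=\frac{n}{\rho_c}-a|J|^q+o(|J|^q)$, so Proposition~\ref{prop-critical-exponent} applies: for~$q<2$, $a>0$ it gives~$\beta=\frac1q$, and for~$q=2$ the assumption~$a>-\frac{n^2}{\rho_c^3(n+2)}$ places us in case~(iii) of that proposition and gives~$\beta=\frac12$. From its proof I would also retain the expansion~$\rho=\rho_c\bigl(1+B\kappa^{r}+o(\kappa^{r})\bigr)$, where~$r=q$ and~$B=a(\rho_c/n)^{q+1}>0$ when~$q<2$, and~$r=2$ and~$B=a\rho_c^3/n^3+\frac1{n(n+2)}>0$ when~$q=2$. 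Inverting gives~$\kappa\sim(\tfrac{\rho-\rho_c}{\rho_c B})^{1/r}$, hence~$\alpha_0=\tfrac1n(\rho_c B)^{-1/r}$ via~$\alpha(\rho)=c(\kappa)\sim\tfrac{\kappa}{n}$.

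The technical heart is the derivative estimate. Since~$j$ is the inverse of~$k$, I would use~$j'(\kappa)=1/k'(j(\kappa))$ together with~$j(\kappa)\sim\frac{\rho_c}{n}\kappa\to0$ to get~$j'(\kappa)=\frac{\rho_c}{n}+\frac{a(q+1)\rho_c^{q+2}}{n^{q+2}}\kappa^{q}+o(\kappa^{q})$. Combining this with the known expansions~$c(\kappa)=\frac{\kappa}{n}-\frac{\kappa^3}{n^2(n+2)}+O(\kappa^5)$ and~$c'(\kappa)=1-(n-1)\frac{c}{\kappa}-c^2$ (see~\cite{frouvelle2012dynamics,degond2012macroscopic}), I would differentiate~$\rho=j/c$ and collect terms, obtaining~$\frac{\d\rho}{\d\kappa}=\rho_c B\,r\,\kappa^{r-1}+o(\kappa^{r-1})$ and hence~$\frac{\d\kappa}{\d\rho}\sim\frac1r(\rho_c B)^{-1/r}(\rho-\rho_c)^{1/r-1}=n\alpha_0\beta(\rho-\rho_c)^{\beta-1}$, as required. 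I expect the main obstacle in a fully rigorous argument to lie here: one must justify that the~$o(\cdot)$ remainders may be differentiated, which is where the prescribed form of the expansion of~$k'$ and the Lipschitz bound on~$k'$ from Hypothesis~\ref{hyp-differentiable} enter, and, when~$q=2$, one must carefully separate the two contributions of order~$\kappa$ to~$\frac{\d\rho}{\d\kappa}$ (and to~$\widetilde{c}-c$), one coming from the~$a$-term in~$k$ and one from the cubic term in~$c$.

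Finally I would invoke Proposition~\ref{prop-hyperbolic-second}. In case~(i),~$q<2$ forces~$\beta=\frac1q>\frac12$, so its part~(ii) gives~$\Theta>0$ near~$\rho_c$ and hyperbolicity. In cases~(ii) and~(iii) we have~$\beta=\frac12$, so its part~(iii) says~$\Theta$ has the sign of~$\sqrt{\frac{4(n+2)}{5n\,\rho_c}}-\alpha_0$; substituting~$\alpha_0=\frac1{n\sqrt{\rho_c B}}$ with~$B=a\rho_c^3/n^3+\frac1{n(n+2)}$, squaring reduces this to the sign of~$B-\frac{5}{4n(n+2)}$, i.e.\ of~$a-\frac{n^2}{4\rho_c^3(n+2)}$. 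Hence the model is hyperbolic near~$\rho_c$ when~$a>\frac{n^2}{4\rho_c^3(n+2)}$, which is~(ii), and non-hyperbolic when~$-\frac{n^2}{\rho_c^3(n+2)}<a<\frac{n^2}{4\rho_c^3(n+2)}$, which is~(iii).
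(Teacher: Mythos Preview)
Your proposal is correct and follows exactly the route the paper itself indicates: the paper does not give a proof but explicitly says ``It is possible to refine Proposition~\ref{prop-critical-exponent} in order to have the critical exponent estimation on the level of the derivative, and then express the hyperbolicity of the system with the help of the expansion of~$k$ only\ldots the proof of which is left to the reader.'' You have carried out precisely this plan---integrating the hypothesis on~$k'$ to recover the setting of Proposition~\ref{prop-critical-exponent}, using~$j'(\kappa)=1/k'(j(\kappa))$ to promote the expansion of~$\rho(\kappa)$ to one of~$\frac{\d\rho}{\d\kappa}$, and then feeding~$\beta$ and~$\alpha_0$ into Proposition~\ref{prop-hyperbolic-second}---and your threshold computation~$B-\tfrac{5}{4n(n+2)}=a\rho_c^3/n^3-\tfrac{1}{4n(n+2)}$ correctly recovers the value~$a=\tfrac{n^2}{4\rho_c^3(n+2)}$. (You have also silently corrected the evident misprint ``$|J|\to\infty$'' to ``$|J|\to0$''.)
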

We finally give a result about hyperbolicity when~$ρ$ is large, depending on the behavior of~$k$ as~$|J|→∞$.
\begin{proposition}
\label{prop-hyperbolic-large}
We suppose that~$k(|J|)\sim a\, |J|^b$ as~$|J|→∞$ (with~$a,b>0$), and that this equivalent is also true at the level of the derivative:~$k'(|J|)\sim a \,b\,|J|^{b-1}$.
We consider the SOH model associated to a branch of stable von Mises--Fisher equilibria.
\begin{itemize}
\item[(i)] If~$0<b<1$, then for~$ρ$ sufficiently large,~$Θ(ρ)<0$ and the SOH model is not hyperbolic.
\item[(ii)] If~$b>1$, then for~$ρ$ sufficiently large,~$Θ(ρ)>0$ and the SOH model is hyperbolic.
\item[(iii)] If~$b=1$, we have to make stronger hypotheses on~$k$.
For example, if we suppose that, as~$|J|→∞$, we have~$k(|J|)=a|J|+r+o(1)$ and~$k'(|J|)=a+o(|J|^{-1})$ with~$r≠\frac{n+1}6$, then for~$ρ$ sufficiently large,~$Θ(ρ)$ is of the sign of~$r-\frac{n+1}6$.
\end{itemize}
\end{proposition}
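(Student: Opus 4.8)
Since the SOH model~\eqref{thm-evolution-rho}--\eqref{thm-evolution-omega} is hyperbolic exactly when~$Θ(ρ)>0$, the proposition is a statement about the sign of~$Θ$ as~$ρ→∞$. I would start from the closed form~\eqref{def-Theta-hyperbolicity}--\eqref{Theta-kappa},
\[Θ=\frac1κ+\frac{\widetilde c-c}{(n-1)+\frac{κ}{j}\frac{\d j}{\d κ}-\frac{κ}{c}+κ c}\,,\]
where~$c=c(κ)$,~$\widetilde c=\widetilde c(κ)$,~$j=j(κ)$ and~$κ$ runs along the chosen branch. Along a branch of stable von Mises--Fisher equilibria one has~$ρ=j(κ)/c(κ)$ with~$c(κ)→1$ and~$j$ unbounded, so~$ρ→∞$ is equivalent to~$κ→∞$; hence ``$ρ$ sufficiently large'' means ``$κ$ sufficiently large'', and the whole statement reduces to the behaviour of the right-hand side as~$κ→∞$. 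Contrary to Propositions~\ref{prop-hyperbolic-first} and~\ref{prop-hyperbolic-second}, Conjecture~\ref{conj-c-ctild} is not needed here: one uses the explicit asymptotics~\eqref{exp-c}--\eqref{exp-ctild} of~$c$ and~$\widetilde c$ directly.

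The plan is to expand the three ingredients. The only~$k$-dependent term is~$\frac{κ}{j}\frac{\d j}{\d κ}$: differentiating~$κ=k(j(κ))$ gives~$\frac{\d j}{\d κ}=1/k'(j)$, so~$\frac{κ}{j}\frac{\d j}{\d κ}=\frac{κ}{j\,k'(j)}$, and the hypotheses~$k(|J|)\sim a|J|^b$,~$k'(|J|)\sim ab|J|^{b-1}$ (with~$j(κ)→∞$) give~$j\,k'(j)\sim bκ$, whence~$\frac{κ}{j}\frac{\d j}{\d κ}→\frac1b$. From~\eqref{exp-c}--\eqref{exp-ctild},~$c=1-\frac{n-1}{2κ}+O(κ^{-2})$ and~$\widetilde c=1-\frac{n+1}{2κ}+O(κ^{-2})$, hence~$\widetilde c-c=-\frac1κ+O(κ^{-2})$ and~$κ c-\frac{κ}{c}=κ\frac{c^2-1}{c}=-(n-1)+O(κ^{-1})$. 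Substituting, the denominator tends to~$(n-1)-(n-1)+\frac1b=\frac1b>0$, so the second term of~$Θ$ is comparable to~$\frac1κ$ and competes with the first one; the net result is a nonzero multiple of~$κ^{-1}$ as soon as~$b≠1$, with a sign that changes as~$b$ crosses~$1$. Reading off this sign, and invoking the criterion~$Θ>0⇔$ hyperbolicity, gives points~(i) and~(ii); re-expressing~$κ$ through~$ρ$ then shows the conclusions hold for all~$ρ$ beyond a finite threshold.

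The delicate point --- and the real content of the proof --- is the borderline case~(iii),~$b=1$, in which the~$κ^{-1}$ coefficient vanishes identically, so every expansion must be carried one order further. Under the refined hypotheses~$k(|J|)=a|J|+r+o(1)$ and~$k'(|J|)=a+o(|J|^{-1})$ one gets~$j(κ)=\frac{κ}{a}-\frac{r}{a}+o(1)$ and~$\frac{κ}{j}\frac{\d j}{\d κ}=\frac{κ}{j\,k'(j)}=1+\frac{r}{κ}+o(κ^{-1})$, while~\eqref{exp-c}--\eqref{exp-ctild} must be used to order~$κ^{-2}$, giving~$\widetilde c-c=-\frac1κ+\frac{n-2}{3κ^2}+O(κ^{-3})$ and~$κ c-\frac{κ}{c}=-(n-1)-\frac{n-1}{2κ}+O(κ^{-2})$. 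Substituting carefully, the~$O(κ^{-1})$ contributions cancel and one is left with~$Θ\sim\frac1{κ^2}\bigl(r-\frac{n+1}{6}\bigr)$, which is point~(iii). The genuine obstacle here is the bookkeeping: keeping track of the several partial cancellations at order~$κ^{-2}$ without dropping a term, and checking that the assumed equivalents for~$k$ and~$k'$ really control the remainders at the order required.
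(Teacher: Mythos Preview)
Your approach is essentially identical to the paper's: both start from the expression $\Theta=\frac{1}{\kappa}+\frac{\widetilde c-c}{D}$ with $D=(n-1)+\frac{\kappa}{j}\frac{\d j}{\d\kappa}-\frac{\kappa}{c}+\kappa c$, expand each ingredient as $\kappa\to\infty$ using~\eqref{exp-c}--\eqref{exp-ctild} together with the asymptotics of~$j$ inherited from those of~$k$, and then read off the sign of~$\Theta$. Your treatment of case~(iii) is correct and matches the paper's computation line by line.

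There is, however, a genuine gap in your handling of~(i)--(ii): you stop at ``the net result is a nonzero multiple of~$\kappa^{-1}$\dots with a sign that changes as~$b$ crosses~$1$'' without ever computing that multiple. If you carry your own expansion through, you obtain $D\to\tfrac{1}{b}$ and $\widetilde c-c\sim-\tfrac{1}{\kappa}$, hence
\[\Theta\sim\frac{1-b}{\kappa},\]
which gives $\Theta<0$ for $b>1$ and $\Theta>0$ for $0<b<1$ --- the \emph{opposite} of what~(i) and~(ii) assert. The paper writes $\Theta\sim\bigl(1-\tfrac{1}{b}\bigr)\tfrac{1}{a\rho^b}$, but this appears to be a slip: the paper's own intermediate expansions (it records $\widetilde c-c=-\tfrac{1}{\kappa}+\tfrac{n-2}{3\kappa^2}+o(\kappa^{-2})$ and the denominator tending to~$\tfrac{1}{b}$) force the coefficient $1-b$, not $1-\tfrac{1}{b}$. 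Consistently, the paper's own application of the proposition to $k(|J|)=|J|+|J|^2$ (so $b=2$) a few paragraphs later concludes \emph{non}-hyperbolicity for large~$\rho$, which agrees with the coefficient $1-b$ and contradicts~(ii) as stated. So your method is right and is complete for~(iii); for~(i)--(ii) you must actually finish the arithmetic, and when you do you will discover that the two cases in the statement are interchanged.
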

\begin{proof}
We first use the expansion~\eqref{exp-c} to get that~$n-1-\frac{κ}{c}+κ c\sim\frac{-n+1}{2κ}$ as~${κ→∞}$, and that~$\widetilde c-c=-\frac1{κ}+\frac{n-2}{3κ^2}+o(κ^{-2})$.
Using Hypothesis~\ref{hyp-increasing}, the assumptions become~$j(κ)\sim(\frac{κ}{a})^{\frac1b}$ and~$\frac{\d j}{\d κ}=[k'(j(κ))]^{-1}\sim (a\,b)^{-1}(\frac{κ}{a})^{\frac1b-1}$ as~$κ→∞$.
This gives~$ρ=\frac{j(κ)}{c(κ)}\sim(\frac{κ}{a})^{\frac1b}$ as~$κ→∞$, which can be inverted to get~$κ\sim a\,ρ^b$ as~$ρ→∞$.

Finally, for~$b≠1$, we get, with the left part of~\eqref{Theta-kappa}:
\[Θ(ρ)\sim \big(1-\frac1b)\frac1{a\,ρ^b}\text{ as }ρ→+∞.\]
This proves the first two points.
In the case where~$b=1$, we suppose that we have the expansions~$k(|J|)=a|J|+r+o(1)$ and~$k'(|J|)=a+o(|J|^{-1})$ as~$|J|→∞$.
Then~$j(κ)=\frac1a(κ-r)+o(1)$ and~$\frac{\d j}{\d κ}=\frac1a+o(κ^{-1})$.
And we finally get, using the left part of~\eqref{Theta-kappa}, as~$κ→+∞$:
\begin{align*}
Θ&= \frac1{κ}+\frac{-κ^{-1}+\frac{n-2}{3}κ^{-2}+o(κ^{-2})}{1+(r-\frac{n-1}2)κ^{-1}+o(κ^{-1})}\\
 &=(r-\tfrac{n+1}6)κ^{-2}+o(κ^{-2}),
\end{align*}
Since~$κ\sim a\, ρ$ as~$ρ → ∞$, we have~$Θ(ρ)\sim \frac1{a^2}(r-\tfrac{n+1}6)ρ^{-2}$ as~$ρ → ∞$ and this proves point~(iii).
\end{proof}
\begin{remark}
\label{remark-h-bounded}
The case~$b=0$ can also be treated if we assume Conjecture~\ref{conj-c-ctild}.
This corresponds to the case where~$k$ takes values on~$[0,κ_{max})$ with~$κ_{max}<∞$.
If furthermore we assume that its derivative satisfies~$k'\sim a|J|^{-b}$ (with~$b>1$ and~$a>0$) as~$|J|→∞$, then after the same kind of computations we get that~$Θ(ρ)→+∞$ as~$ρ→+∞$, and the system is hyperbolic.
\end{remark}

Let us now comment these results in the case of specific examples.
The case where~$ν(|J|)=|J|$ and~$τ(|J|)=1$ corresponds to the model studied in~\cite{degond2012macroscopic}.
It was shown to be non hyperbolic (numerically for all~$ρ>ρ_c=n$, and theoretically for~$ρ$ large or close to~$ρ_c$).
We now see that it corresponds to points~(iii) of Proposition~\ref{prop-hyperbolic-large} and Proposition~\ref{prop-refined-hyperbolicity-second}, which are the special cases separating hyperbolicity to non-hyperbolicity.
A really slight change in the function~$k$ in this model could easily lead to hyperbolicity, while nearly keeping the same phase transition phenomena, from the point of view of equilibria.

The case studied in Section~\ref{subsec-first} and leading to a first order phase transition corresponds to the function~$k(|J|)=|J|+|J|^2$.
Thanks to Proposition~\ref{prop-hyperbolic-first} and to~\ref{prop-hyperbolic-large}, we get that the corresponding SOH model is not hyperbolic in both regimes: when~$ρ$ is close to~$ρ_*$ and when~$ρ$ is sufficiently large.
Numerical computations of the coefficient~$Θ$ suggest that this is the case for all the values of~$ρ>ρ_*$ (at least in dimensions~$2$ and~$3$).

Finally, we are interested in the original model presented in~\cite{degond2008continuum}, where~$ν$ and~$τ$ are constant.
We remark that hypotheses~\ref{hyp-prop-chaos} and~\ref{hyp-increasing} do not cover this model, but we can see it as a limiting case of a regularized~$ν$ satisfying such hypotheses, such as~$ν^ε(|J|)=\frac{|J|}{ε+|J|}$.
In that case, we have~$ρ_c^ε=n\,ε\,τ_0$, and by Lemma~\ref{lemma-2nd-order} and Proposition~\ref{prop-critical-exponent}, we get that there is a second order phase transition with critical exponent~$1$.
Furthermore, with Remark~\ref{remark-h-bounded} and Proposition~\ref{prop-hyperbolic-second}, we get that the corresponding SOH model is hyperbolic when~$ρ$ is large or close to~$ρ_c$.
Figures~\ref{fig-phase-diagram-epsilon} and~\ref{fig-Theta-epsilon} correspond to the plots of the phase diagram (the order parameter~$c$) and of the function~$Θ$ for three different values of~$ε$, with~$τ_0=\frac13$ and~$n=2$.
We observe that the system is always hyperbolic.

\begin{figure}[h]
\centering
\input{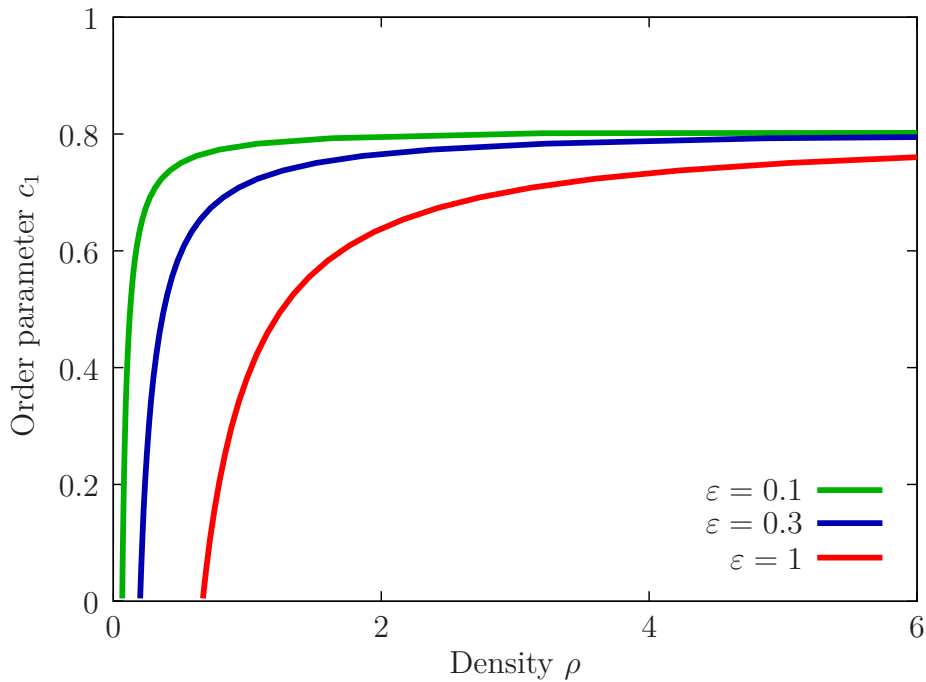}
\caption{Order parameter~$c_1$, as function of the density~$ρ$, in dimension~$2$, for the regularized model.}
\label{fig-phase-diagram-epsilon}
\end{figure} 
\begin{figure}[!h]
\centering
\input{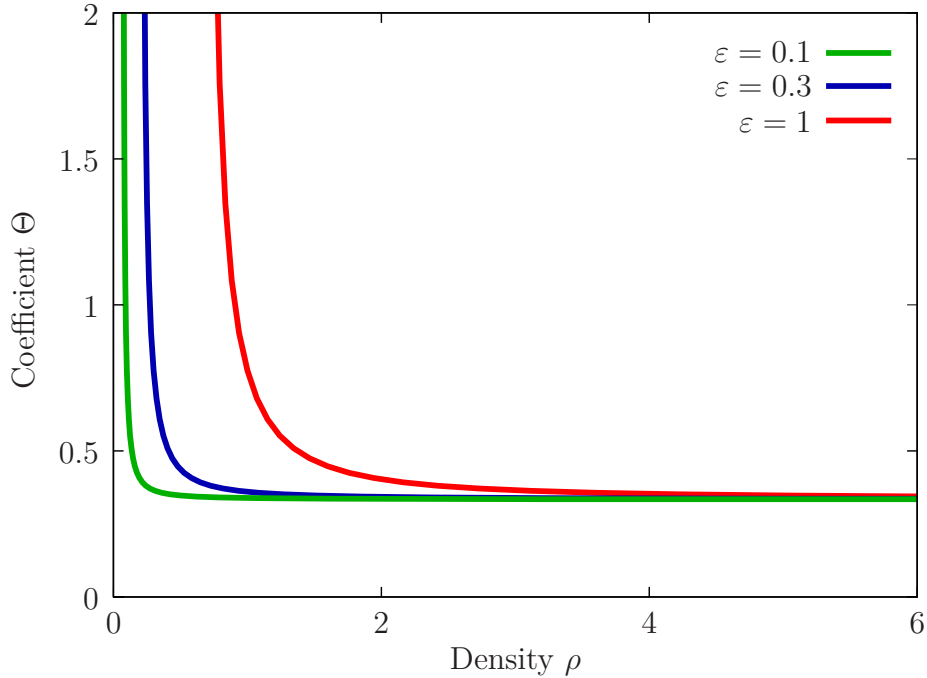}
\caption{Coefficient~$Θ$, as function of the density~$ρ$, in dimension~$2$, for the regularized model.}
\label{fig-Theta-epsilon}
\end{figure} 

We get the same conclusion for a regularization given by~$ν^ε(|J|)=\frac{|J|}{√{ε^2+|J|^2}}$, with a critical exponent~$β=\frac12$ this time, and~$k$ satisfies the condition~(ii) of Proposition~\eqref{prop-refined-hyperbolicity-second} if~$τ_0>\frac1{√{2n(n+2)}}$.
This gives a practical example of a second order phase transition with the minimal critical exponent such that the associated SOH model is hyperbolic in the neighborhood of the threshold~$ρ_c$ (indeed, in that case, thanks to Proposition~\eqref{prop-hyperbolic-second}, we must have~$β⩾\frac12$).

\section{Conclusion}
\label{sec-conclu}

In this work, we have provided a comprehensive and rigorous description of phase transitions for kinetic models describing self-propelled particles interacting through alignment.
We have highlighted how their behavior results from the  competition between alignment and noise.
We have considered a general framework, where both the alignment frequency and noise intensity depend on a measure of the local alignment.
We have shown that, in the spatially homogeneous case, the phase transition features (number and nature of equilibria, stability, convergence rate, phase diagram, hysteresis) are totally encoded in the function obtained by taking the quotient of the alignment and noise intensities as functions of the local alignment.
The phase transitions dealt with in this paper belong to the class of spontaneous symmetry-breaking phase transitions that also appear in many physics systems such as ferromagnetism, liquid crystals, polymers, etc.
We have also provided the derivation of the macroscopic models (of hydrodynamic or diffusion types) that can be obtained from the knowledge of the stable equilibria and classified their hyperbolicity.
In particular, we have provided a strong link between the critical exponent in the second order phase transition and the hyperbolicity of the hydrodynamic model.
In the future, we will investigate how the hydrodynamic and diffusion regimes can be spatially connected through domain walls and find the dynamic of these domain walls.

\appendix
\section{Appendix: numerical methodology for the hysteresis simulation}
\label{sec-numeric}

In this appendix, we give more details on the computation of the hysteresis loop provided in section~\ref{subsec-first}.
In order to highlight the role of the density~$ρ$ as the key parameter for the phase transition, we introduce the probability measure~$\widetilde f=\frac{f}{ρ}$ and we rewrite the homogeneous kinetic equation~\eqref{KFP-homogeneous} in terms of~$\widetilde f$.
We get
\begin{equation}
\label{eq-KFP-rho}
∂_t\widetilde f = τ(ρ|J_{\widetilde f}|)Δ_ω\widetilde f - ν(ρ|J_{\widetilde f}|)∇_ω·(P_{ω^⊥}Ω_{\widetilde f}\widetilde f).
\end{equation}
When~$ρ$ is constant, this equation is equivalent to~\eqref{KFP-homogeneous}.
We will now consider~$ρ$ as a parameter of the equation (and not anymore a parameter for the mass of initial condition, since~$\widetilde f$ is now a probability measure), but the long time behavior (equilibria, stability, convergence) is still given by this parameter~$ρ$.
Finally, we let~$ρ$ vary slowly with time (compared to the time scale of convergence to equilibrium, given by Figure~\ref{fig-rates}), as we expect it would be the case in the spatial inhomogeneous framework given by the kinetic equation~\eqref{KFP-meanfield}.

\subsection{Simulation at the kinetic level.}

Let us now present how the numerical simulations of the system~\eqref{eq-KFP-rho} in dimension~$n=2$, depicted in Figure~\ref{fig-numerics-kinetic-2d}, have been obtained.
We start with an initial condition which is a small perturbation of the uniform distribution, and we take a varying parameter of the form~$ρ=1.75-0.75\cos(\frac{π}T t)$, with~$T=500$.
We use a standard central finite different scheme (with~$100$ discretization points), implicit in time (with a time step of~$0.01$).
The only problem with this approach is that the solution converges strongly to the uniform distribution for~$ρ<ρ_c$. 
So after passing~$ρ_c$, the linear rate of increase for~$J_{\widetilde f}$ is given by~$\frac{ρ}{ρ_c}-1$, by virtue of~\eqref{ODE-J-reduced}, and is very slow when~$ρ$ is close to~$ρ_c$.
So since~$J_{\widetilde f}$ is initially very small when passing the threshold~$ρ=ρ_c$, the convergence to the stable von Mises--Fisher distribution is very slow.
Two ideas can be used to overcome this problem: either injecting noise in the system, or more efficiently, adding a threshold~$ε$ and strengthening~$|J_{\widetilde f}|$ when~$∥\widetilde f-1∥_∞⩽ε$, replacing~$\widetilde f$ at the end of such a step by
\[ \widetilde f + \max(0,ε-∥\widetilde f-1∥_∞)\,Ω_{\widetilde f}\cdotω.\]
We note that after this transformation, we still have~$∥\widetilde f-1∥_∞⩽ε$ if it was the case before applying the transformation.

Figure~\ref{fig-numerics-kinetic-2d} depicts the result of a numerical simulation with a threshold~$ε=0.02$.
We clearly see this hysteresis cycle, which agrees very well with the theoretical diagram.
The jumps at~$ρ=ρ_*$ and~$ρ=ρ_c$ are closer to the theoretical jumps when~$T$ is very large.

\subsection{Simulations at the particle level.}

Now, since the kinetic equation~\eqref{KFP-meanfield} comes from a limit of a particle system, we are interested in observing this hysteresis phenomenon numerically at the level of the particle system, where noise is already present in the model, since it is a system of stochastic differential equations.

As for~\eqref{KFP-meanfield}, it is easy to derive the mean-field equation~\eqref{eq-KFP-rho}, in the spirit of Proposition~\ref{prop-meanfield-limit}, from the following system:
\begin{align}
&\d ω_i=ν(ρ|J|) P_{ω_i^⊥}\,Ω \,\d t + √{2τ(ρ|J|)}P_{ω_i^⊥}∘\d B^i_t,\label{particle-hom-domega}\\
&Ω=\frac{J}{|J|}, \quad J = \frac{1}N\sum_{i=1}^N \,ω_i.
\label{particle-hom-Omega-J}
\end{align}
Here, once again, the parameter~$ρ$ is a parameter of the equation, which can be variable in time. We perform numerical simulations of this system for a large number of particles, with~$ρ$ varying as in the numerical simulation of the kinetic model.
As before, we start with a initial condition which consists of~$N=10000$ particles uniformly distributed on~$𝕊_1$, and we take~$ρ=1.75-0.75\cos(\frac{π}T t)$, with~$T=500$.
We use a splitting method for the random and the deterministic parts of this equation (with a time step of~$0.01$).
We then plot the order parameter~$c$, given by~$|J|$.
The result is given in Figure~\ref{fig-numerics-particle-2d}.

Let us remark that, thanks to the central limit theorem, the mean~$J$ of~$N$ vectors uniformly distributed on the circle has a law equivalent to a multivariate normal distribution in~$ℝ^2$ centered at~$0$, and with covariance matrix~$\frac{1}{2N}{\mathrm I}_2$.
Therefore~$|J|$ is equivalent to a Rayleigh distribution of parameter~$\frac{1}{√{2N}}$, and so the mean of~$|J|$ is equivalent to~$\frac{√π}{2√N}$.
In our case, that gives a mean of~$|J|$ of approximately~$0.009$, of the same order as in the previous section, since the threshold~$ε$ ensures that, when~$\widetilde f$ is close to the uniform distribution~$|J_{\widetilde f}|\approx\frac{ε}2$ with~$ε=0.02$.


\end{document}